\theoremstyle{plain}
\newtheorem{Thm}{Theorem}[section]
\newtheorem{Conj}[Thm]{Conjecture}
\newtheorem{Cor}[Thm]{Corollary}
\newtheorem{Lem}[Thm]{Lemma}
\theoremstyle{definition}
\newtheorem{Defn}[Thm]{Definition}
\newtheorem{Expl}[Thm]{Example}
\theoremstyle{remark}
\newtheorem{Rem}[Thm]{Remark}
\newtheorem{Que}[Thm]{Question}
\numberwithin{equation}{section}
\title{Birational geometry and derived categories}
\author{Yujiro Kawamata}
\begin{document}
\maketitle

\begin{abstract}
This paper is based on a talk at a conference \lq\lq JDG 2017: Conference on Geometry and Topology''.
We survey recent progress on the DK hypothesis connecting the birational geometry and 
the derived categories stating that the $K$-equivalence of smooth projective varieties should correspond to the equivalence of 
their derived categories, and the $K$-inequality to the fully faithful embedding.  
We consider two kinds of factorizations of birational maps between algebraic varieties into elementary ones; 
those into flips, flops and divisorial contractions according to the minimal model program, 
and more traditional weak factorizations into blow-ups and blow-downs with smooth centers.
We review major approaches towards the DK hypothesis for flops between smooth varieties.
The latter factorization leads to an weak evidence of the DK hypothesis at the Grothendieck 
ring level.
DK hypothesis is proved in the case of toric or toroidal maps, and leads to the derived McKay
correspondence for certain finite subgroups of $GL(n,\mathbf{C})$.
\end{abstract}

\tableofcontents


\section{Introduction: DK hypothesis}\label{intro}

We work over $\mathbf{C}$ or an algebraically closed field of characteristic $0$.

This is a continuation of \cite{DK} and \cite{Seattle}.
In \cite{DK} we studied the parallelism between $D$ (derived) and $K$ (canonical) equivalences
inspired by earlier results in \cite{Bondal}, \cite{Orlov}, \cite{BO}, \cite{BKR}, \cite{Bridgeland2}.
Namely we asked the following question: $K$-equivalence implies $D$-equivalence?
The converse direction from $D$ to $K$ is partly proved in some cases thanks to Orlov's representation theorem \cite{Orlov2}.
We asked furthermore a generalized question: $K$-inequality implies $D$-embedding?
We note that a fully faithful embedding between derived categories of smooth projective varieties 
implies the semi-orthogonal decomposition (SOD) by \cite{Bondal-VdBergh}.
These questions are positively answered in the case of toric and toroidal morphisms \cite{log-crep}, 
\cite{toric}, \cite{toricII}, \cite{toricIII}.
We also consider the implication of $K$-equivalence at the level of Grothendieck ring of varieties and 
categories, thereby giving a weak evidence for our questions.

The canonical divisor $K_X$ is the most fundamental invariant of an algebraic variety $X$.
We chase the change of $K$ or its log version $K_X+B$ when we run a minimal model program (MMP). 
$K$ is also related to the study of derived categories because 
$K$, or more precisely the tensoring of $\omega_X[\dim X]$, appears as the Serre functor 
of a derived category of a smooth projective variety $X$.
We note that the Serre functor is determined only by the categorical data.

We note that an algebraic variety can be reconstructed from an abelian category of coherent sheaves.
Thus a nontrivial equivalence or a fully faithful embedding between the derived categories of 
birationally equivalent algebraic varieties occurs only at the level of derived categories. 
A derived category may have different $t$-structures corresponding to different varieties.
It is interesting to investigate the wall crossing phenomena corresponding to the birational change of varieties.

\begin{Defn}
Two smooth projective varieties $X$ and $Y$ are said to be {\em $K$-equivalent}, and denoted by $X \sim_K Y$, if there
is a third smooth projective variety $Z$ with birational morphisms $f: Z \to X$ and $g: Z \to Y$ such that 
the pull-backs of canonical divisors are linear equivalent: $f^*K_X \sim g^*K_Y$.
Similarly, a {\em $K$-inequality} $X \le_K Y$ is defined by $f^*K_X + E \sim g^*K_Y$ 
for some effective divisor $E$ on $Z$.
\end{Defn}

We can replace $Z$ by any other higher model in the above definition, i.e., by $Z'$ with a birational morphism $Z' \to Z$.
Thus the above properties depend only on the choice of a birational map $g \circ f^{-1}: X \dashrightarrow Y$.

We denote by $D^b(\text{coh}(X))$ the bounded derived category of coherent sheaves on $X$.
Our main question is the following:

\begin{Conj}[{\em DK-hypothesis}]
Let $X$ and $Y$ be smooth projective varieties.
If $X \sim_K Y$, then there is an equivalence of 
triangulated categories $D^b(\text{coh}(X)) \cong D^b(\text{coh}(Y))$.
If $X \le_K Y$, then there is a fully faithful functor of triangulated categories $D^b(\text{coh}(X)) \to D^b(\text{coh}(Y))$.
\end{Conj}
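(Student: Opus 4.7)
The plan is to attack the conjecture by decomposing the birational correspondence between $X$ and $Y$ into elementary pieces for which the derived statement is accessible, and then assembling the resulting functors into a single fully faithful embedding or equivalence. Two decompositions are naturally available. The weak factorization theorem of Abramovich--Karu--Matsuki--W\l odarczyk decomposes any birational map between smooth projective varieties into a chain of blow-ups and blow-downs with smooth centers, and for each blow-up Orlov's formula gives a semi-orthogonal decomposition in which $D^b(\text{coh}(X))$ embeds as a component. The minimal model program, on the other hand, factors the map into flips, flops, and divisorial contractions; this decomposition interacts much more cleanly with the canonical divisor and hence with the $K$-(in)equality hypothesis.

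For the $K$-inequality case I would use the weak factorization route. Each blow-up step contributes a fully faithful embedding by Orlov, and each blow-down contributes an embedding in the reverse direction. The key point is to show that the $K$-inequality $X \le_K Y$ guarantees that, after tracking discrepancies along the chain, the blow-down contributions are compensated by sufficiently many blow-ups on the $Y$-side, so that the composite functor from $D^b(\text{coh}(X))$ to $D^b(\text{coh}(Y))$ is still fully faithful. Concretely, I would keep a bookkeeping of the discrepancy divisors on an intermediate common resolution $Z$ and correlate each positive discrepancy with a surviving SOD summand.

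For the $K$-equivalence case the discrepancies cancel exactly, so the heuristic is that the embeddings from both sides should sweep out the same SOD summands and glue into an equivalence. For flops between smooth varieties, where MMP gives a much finer grip than weak factorization, I would invoke the established equivalences (Bondal--Orlov for standard flops, Kawamata and Namikawa for Mukai flops, and more recent constructions using variation of GIT, tilting, moduli of perverse point sheaves \`a la Bridgeland, or grassmannian/spherical twist functors) and attempt to unify them by constructing, directly from the data of $f\colon Z \to X$ and $g\colon Z \to Y$ with $f^*K_X \sim g^*K_Y$, a Fourier--Mukai kernel on $X \times Y$ whose support is the image of $Z$. The toric/toroidal verification already cited in the paper provides the combinatorial template: there the matching of cone stratifications under the $K$-condition produces the kernel explicitly, and one hopes to transport this via formal-neighborhood or log-geometric arguments.

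The main obstacle is that neither factorization is canonical and neither is functorial with respect to the $K$-condition. A long composition of fully faithful Fourier--Mukai functors need not itself be Fourier--Mukai in a controlled way, and agreement with the ``natural'' kernel on the common resolution is not automatic. I expect that a full proof will not be obtained by grinding through the factorization step by step, but will instead require an intrinsic construction of the kernel from the $K$-equivalence data on $Z$ --- perhaps via a heart of perverse coherent sheaves on $Z$ relative to both contractions, or via wall-crossing for a family of Bridgeland stability conditions interpolating between the $t$-structures of $X$ and $Y$. The Grothendieck-ring shadow of the statement, mentioned earlier in the introduction, should be regarded as evidence that such a uniform kernel exists, even though it does not by itself produce one.
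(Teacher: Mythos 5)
The statement you are trying to prove is the DK hypothesis itself, which the paper states as a \emph{conjecture} and does not prove; it only surveys partial results, evidence, and approaches. Your proposal is likewise not a proof but a research program, and you essentially concede this in your final paragraph. The two decisive gaps are exactly the ones the paper identifies as the open problems. First, in the weak factorization route, the step where you ``correlate each positive discrepancy with a surviving SOD summand'' and conclude that the composite functor is fully faithful has no known justification: along a zig-zag of blow-ups and blow-downs the derived categories inflate and deflate, semi-orthogonal summands do not cancel against each other the way divisor classes do, and a composition of a fully faithful pullback with the adjoint of another one is not fully faithful in general. This is precisely why the paper retreats to the Grothendieck-ring level (Theorem~\ref{S(X,B)}), where classes \emph{do} cancel, and explicitly calls that only ``weak evidence.'' Second, in the MMP route, the factorization of a $K$-equivalence into flops is itself conjectural in general (Conjecture~\ref{factorization of K}, known only when $K_X+B$ is nef or in the toric case), the intermediate steps acquire singularities that are not known to admit a good categorical replacement beyond the quotient-type case, and the derived equivalence for a single flop between smooth varieties is open outside of special classes (dimension three, toric/toroidal, stratified Mukai flops, and a few others). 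Your proposed ``intrinsic construction of the kernel from the $K$-equivalence data on $Z$'' is a restatement of the conjecture, not a step toward proving it.

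None of this means your outline is wrongheaded as a program --- it matches the paper's own organization of the problem --- but as a proof it is circular at the critical junctures: every place where the argument must actually close, you invoke either an unproved cancellation of SOD components or the existence of a kernel whose construction is the content of the conjecture. If you want to produce something provable along these lines, the honest targets are the cases the paper actually establishes: the toric/toroidal case via explicit tilting generators and fiber-product kernels, or the Grothendieck-ring statement via Lemma~\ref{permissible} and the weak factorization theorem.
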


\begin{Defn}
A {\em semi-orthogonal decomposition} (SOD) of a triangulated category $\mathcal{A}$, denoted by 
\[
\mathcal{A} = \langle \mathcal{C}, \mathcal{B} \rangle
\]
is defined by the following conditions:

\begin{enumerate}
\item $\mathcal{B}$ and $\mathcal{C}$ are triangulated subcategories of $\mathcal{A}$ which are 
orthogonal in one direction in the sense that 
$\text{Hom}(b,c) = 0$ if $b \in \mathcal{B}$ and $c \in \mathcal{C}$.

\item $\mathcal{B}$ and $\mathcal{C}$ generate $\mathcal{A}$ in the sense that, for arbitrary $a \in \mathcal{A}$, 
there exist $b \in \mathcal{B}$ and $c \in \mathcal{C}$ such that there is a distinguished triangle
\[
b \to a \to c \to b[1] 
\] 
\end{enumerate}
\end{Defn}

By Bondal and Van den Bergh \cite{Bondal-VdBergh}, 
if there is a fully faithful functor $\Phi: D^b(\text{coh}(X)) \to D^b(\text{coh}(Y))$ for
smooth projective varieties $X$ and $Y$, then there is a semi-orthogonal decomposition
\[
D^b(\text{coh}(Y)) = \langle \mathcal{C}, \Phi(D^b(\text{coh}(X))) \rangle
\]
where $\mathcal{C}$ is the {\em right orthogonal complement} of the image of $\Phi$ defined by
\[
\mathcal{C} = \{c \in \mathcal{A} \mid \text{Hom}(\Phi(b),c) = 0 \,\,\, \forall b \in D^b(\text{coh}(X)) \}. 
\]

\begin{Expl}
Let $f: Y \to X$ be a birational {\em morphism} between smooth projective varieties.
Then $K_Y = f^*K_X + E$, where $E$ is an effective divisor which is locally expressed as the divisor of the 
Jacobian determinant.
Correspondingly, the pull back functor $Lf^*: D^b(\text{coh}(X)) \to D^b(\text{coh}(Y))$ is fully faithful.
The right orthogonal complement is given by
$\mathcal{C} = \{c \in D^b(\text{coh}(Y)) \mid Rf_*c = 0\}$.
\end{Expl}

An evidence of the conjecture is given by a partial converse: if there is an equivalence of the derived
categories of smooth projective varieties, then these varieties are $K$-equivalent under some additional conditions
(Theorem~\ref{converse1}).
This is a consequence of Orlov's representability theorem (\cite{Orlov2}).
We note that the full converse is not true even in the equivalence case (Remark~\ref{Uehara}).

Any birational morphism should be decomposed into elementary birational maps.
In a traditional way, elementary maps are blowing-ups and downs.
The {\em weak factorization theorem} states that any birational morphism is factorized into blow-ups and downs
(\S \ref{weak fact} Theorems~\ref{weak} and \ref{Morse}).
The derived categories changes accordingly in such a way that they inflate and deflate.
Then it is difficult to compare the end results.
But still we can say something on the Grothendieck group level of derived categories for  
$K$-equivalent varieties as shown in \S \ref{K0} (Theorem~\ref{S(X,B)}). 
This is a weak evidence for the above conjecture.

More modern minimal model program (MMP) implies that any birational morphism 
which strictly decreases the canonical divisor is factorized into flips and
divisorial contractions.
In this process, the canonical $K$ decreases in each steps, so that the comparison should be easy.
Ideally speaking, there is a parallelism between the MMP 
and the behavior of the derived category, i.e., fully faithful embeddings, as in the conjecture.
But singularities appear when we perform the MMP and cause trouble.
If the singularities are only quotient ones, then we can still treat them as if they are smooth 
using the covering stacks (\S \ref{quotient}), but we do not know how to treat more complicated singularities.
By the use of MMP, we can prove that the $K$-equivalence can be decomposed into flops in some cases
(\S \ref{factorizationMMP}).
Thus the conjecture for $K$-equivalence can be reduced to the study of flops in these cases 
up to the study of singularities.

We review several known approaches to the above 
conjecture in the case of flops between smooth varieties in \S \ref{approach}.
The first one using the moduli space of objects is the most difficult to prove, 
but reveals the deepest structure of the situation.
The second one uses the variation of the geometric invariant theory quotients (VGIT) 
and is useful when a big algebraic group acts on the whole situation.
We note that the weak factorization theorem is also an application of VGIT.
The third one using the tilting generator connects an algebraic variety to a non-commutative associative algebra 
and maybe the easiest to prove.
These methods are quite interesting in their own rights.
But it is still very far from the proof of the conjecture.

In the case of toric and toroidal morphisms, the conjecture is completely proved by using the MMP and 
the explicit method in a series of papers \cite{log-crep}, \cite{toric}, \cite{toricII}, \cite{toricIII}, which will
be reviewed in \S \ref{toric}.
The McKay correspondence can be considered as a special case of the conjecture when combined with the 
construction in \S \ref{quotient}.
In \S \ref{GL3}, we review some cases of the derived McKay correspondence.

\vskip 1pc

The author would like to thank the National Center for Theoretical Sciences at Taiwan University 
where this work was partly done. 
This work is partly supported by JSPS Grant-in-Aid (A) 16H02141.

\section{Partial converse}\label{converse}

The converse statement of DK hypothesis is partially true thanks to Orlov's representability theorem.
This is a supporting fact to the DK hypothesis itself.

\begin{Thm}\cite{Orlov2}\label{Orlov}
Let $X$ and $Y$ be smooth projective varieties.
Assume that there is a fully faithful functor of triangulated categories
$\Phi: D^b(\text{coh}(X)) \to D^b(\text{coh}(Y))$.
Then there exists a uniquely determined object $e \in D^b(\text{coh}(X \times Y))$ such that 
$\Phi(\bullet) \cong p_{2*}(p_1^*(\bullet) \otimes^L e)$, where $p_i$ are projections for $i=1,2$.
\end{Thm}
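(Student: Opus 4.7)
The plan is to reconstruct the kernel $e$ directly from the values of $\Phi$ on an ample generating sequence in $D^b(\text{coh}(X))$, and then verify that the resulting integral transform agrees with $\Phi$ on all of $D^b(\text{coh}(X))$.

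First, fix a very ample line bundle $H$ on $X$ and consider the ample sequence $\{H^{-n}\}_{n \geq 0}$ in $D^b(\text{coh}(X))$. These objects generate $D^b(\text{coh}(X))$ as a triangulated category, and by Serre vanishing the groups $\text{Hom}(H^{-m}, H^{-n}[k])$ vanish for $k > 0$ and $m-n$ sufficiently large. Setting $E_n := \Phi(H^{-n}) \in D^b(\text{coh}(Y))$, full faithfulness of $\Phi$ gives natural isomorphisms $\text{Hom}(E_m, E_n[k]) \cong \text{Hom}(H^{-m}, H^{-n}[k])$, so morphisms among the $E_n$ are rigidly controlled by sections of powers of $H$.

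Second, I would produce a bounded resolution of the structure sheaf $\mathcal{O}_{\Delta_X}$ of the diagonal in $X \times X$ by a complex whose terms are finite direct sums of objects of the form $H^{-n_i} \boxtimes M_i$ for coherent sheaves $M_i$ on $X$. Such a resolution exists because $X$ is smooth and projective: one embeds $X \hookrightarrow \mathbb{P}^N$ via $H$ and restricts a Beilinson-type resolution of the diagonal of $\mathbb{P}^N$, followed by a Koszul construction. Then I would assemble a candidate kernel $e \in D^b(\text{coh}(X \times Y))$ by replacing each factor $H^{-n_i}$ on the first copy of $X$ by $E_{n_i}$ on $Y$, gluing the terms via morphisms lifted through $\Phi$ from the resolution.

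Third, verify that the Fourier--Mukai transform $\Psi_e(\bullet) := p_{2*}(p_1^*(\bullet) \otimes^L e)$ satisfies $\Psi_e(H^{-n}) \cong E_n$ compatibly with the structure maps, hence $\Psi_e \cong \Phi$ by exactness of both functors and triangulated generation by the $H^{-n}$. Uniqueness of $e$ is checked by evaluating at skyscrapers: the fiber $L\iota_x^*(e)$ along $\iota_x : \{x\} \times Y \hookrightarrow X \times Y$ recovers $\Phi(\mathcal{O}_x)$, which pins $e$ down up to isomorphism in $D^b(\text{coh}(X \times Y))$.

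The main obstacle is the coherent assembly in the second step: transforming the diagonal resolution on $X \times X$ into a bona fide object of $D^b(\text{coh}(X \times Y))$ requires lifting not just objects and morphisms but higher homotopies through $\Phi$. This is most cleanly handled in a dg or $A_\infty$-enhancement of the derived category, via Bondal--Kapranov twisted complexes, where full faithfulness at the enhanced level produces the needed coherences automatically; without an enhancement one must verify by hand that the gluing obstructions vanish, which ultimately rests on the $\text{Hom}$-computations provided by full faithfulness of $\Phi$.
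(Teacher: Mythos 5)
The paper gives no proof of this statement: it is quoted from Orlov's article \cite{Orlov2} as a black box, so there is no internal argument to compare yours against. Your sketch does follow the broad outline of Orlov's original proof (ample sequence $\{H^{-n}\}$, Beilinson-type resolution of the diagonal of $X\times X$ by sums of $H^{-n_i}\boxtimes M_i$, term-by-term replacement using $E_{n_i}=\Phi(H^{-n_i})$), but two of your steps are genuinely gapped. The assembly step is the heart of the matter, and your proposed fix --- pass to a dg enhancement where full faithfulness \lq\lq at the enhanced level'' supplies the coherences --- is circular: the hypothesis gives $\Phi$ only as an exact functor of triangulated categories, with no dg lift, and the existence of such a lift is essentially equivalent to the theorem being proved. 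Orlov's actual device stays at the triangulated level: he shows that a bounded complex of objects in a triangulated category admits a convolution, unique up to isomorphism, provided certain negative $\text{Hom}$-groups between its terms vanish, and he arranges those vanishings by twisting by a large power of $H$ and transporting Serre vanishing through the fully faithful $\Phi$. Some version of that convolution lemma must appear; \lq\lq verify by hand that the gluing obstructions vanish'' is precisely where all the technical weight of the proof sits.

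Your uniqueness argument also does not work as stated. Knowing $L\iota_x^*(e)\cong\Phi(\mathcal{O}_x)$ for every closed point $x$ gives only a pointwise, non-canonical family of isomorphisms of fibers, and this does not determine $e$ up to isomorphism in $D^b(\text{coh}(X\times Y))$: the fibers do not record how the object varies over $X$ (already two non-isomorphic sheaves flat over $X$ can have isomorphic fibers at every point). Orlov obtains uniqueness from the same convolution machinery, by producing an actual morphism between two candidate kernels out of the diagonal resolution and the ample-sequence $\text{Hom}$-computations, and then checking it is an isomorphism. A smaller instance of the same problem occurs in your third step: agreement of $\Psi_e$ and $\Phi$ on the generators $H^{-n}$ and on the maps between them does not by itself extend to an isomorphism of exact functors on the whole category, since natural transformations do not automatically propagate across cones in a triangulated category; this too requires Orlov's extension lemmas for ample sequences.
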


We note that, 
by \cite{Bondal-VdBergh}, $D^b(\text{coh}(X))$ is {\em saturated}, in the sense that $\Phi$ has always adjoints.
The same statement is generalized to the case where $X$ and $Y$ are smooth Deligne-Mumford stacks 
with projective coarse moduli spaces (\cite{equi}).
Therefore we can apply the theorem to the case of projective varieties of quotient types.

\begin{Defn}
Let $\mathcal{A}$ be a $k$-linear triangulated category.
Assume that $\text{Hom}(a,b)$ is a finite dimensional $k$-linear space for any $a,b \in \mathcal{A}$.
An auto-equivalence functor $S: \mathcal{A} \to \mathcal{A}$ is said to be a {\em Serre functor} if there
is an isomorphism $f: \text{Hom}(a,b) \cong \text{Hom}(b,S(a))^*$ 
of bifunctors $\mathcal{A}^o \times \mathcal{A} \to (k\text{-mod})$. 
\end{Defn}

It is important to note that the Serre functor is intrinsic for the category $\mathcal{A}$ if it exists, 
because $S$ is determined by $f$.
The following fact connects the birational geometry to the theory of derived categories:

\begin{Thm}
Let $X$ be a smooth projective variety.
Then the bounded derived category of coherent sheaves $D^b(\text{coh}(X))$ has 
a Serre functor defined by 
\[
S_X(\bullet) = \bullet \otimes \omega_X[\dim X]
\]
where $\omega_X = \mathcal{O}_X(K_X)$ is the {\em canonical sheaf}, i.e., $\Omega^{\dim X}_X$.
\end{Thm}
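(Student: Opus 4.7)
The plan is to derive the theorem from Grothendieck--Serre duality applied to the structural morphism $\pi: X \to \text{Spec}(k)$. Two things need to be verified: first, that $S_X(\bullet) = \bullet \otimes \omega_X[\dim X]$ defines an auto-equivalence of $D^b(\text{coh}(X))$; second, that the required bifunctorial isomorphism $f: \text{Hom}(a,b) \cong \text{Hom}(b, S_X(a))^*$ exists and is natural in both arguments.

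The auto-equivalence part is formal. Since $\omega_X$ is an invertible sheaf on the smooth variety $X$, tensoring with $\omega_X$ is an exact equivalence of the abelian category $\text{coh}(X)$, with inverse $\bullet \otimes \omega_X^{-1}$, and therefore descends to an equivalence of $D^b(\text{coh}(X))$. Composing with the translation $[\dim X]$ gives the claimed auto-equivalence, whose quasi-inverse is $\bullet \otimes \omega_X^{-1}[-\dim X]$.

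For the duality isomorphism, set $n = \dim X$. Because $X$ is smooth projective, every object of $D^b(\text{coh}(X))$ is perfect, and the twisted inverse image satisfies $\pi^! k \cong \omega_X[n]$. Grothendieck duality for the proper morphism $\pi$ provides a natural isomorphism
\[
R\text{Hom}_k(R\pi_* F, k) \cong R\pi_* R\text{Hom}_X(F, \omega_X[n])
\]
for every $F \in D^b(\text{coh}(X))$. I would apply this with $F = R\text{Hom}_X(a,b) \cong a^\vee \otimes^L b$, where $a^\vee = R\text{Hom}_X(a, \mathcal{O}_X)$, and then use the tensor--Hom adjunction to rewrite
\[
R\text{Hom}_X(a^\vee \otimes^L b, \omega_X[n]) \cong R\text{Hom}_X(b, a \otimes \omega_X[n]) = R\text{Hom}_X(b, S_X(a)).
\]
Since $R\pi_* R\text{Hom}_X(a,b) = R\text{Hom}_X(a,b)$, taking $H^0$ and dualizing over $k$ yields precisely $\text{Hom}(a,b) \cong \text{Hom}(b, S_X(a))^*$.

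The main nontrivial inputs are Grothendieck duality for proper morphisms and the identification $\pi^!k \cong \omega_X[n]$ for a smooth morphism; both are standard but substantial. Once they are invoked, bifunctoriality of $f$ in $a$ and $b$ follows automatically from the naturality of Grothendieck duality and of the tensor--Hom adjunction, so no separate verification is needed. Finite-dimensionality of the Hom spaces comes from $X$ being projective, ensuring that $R\pi_*$ lands in bounded complexes of finite-dimensional vector spaces, which is the last hypothesis required by the Serre functor definition.
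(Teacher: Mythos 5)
Your proof is correct, and it is the standard derivation: the paper states this theorem as a classical fact (essentially Serre--Grothendieck duality, in the form popularized by Bondal--Kapranov) and offers no proof of its own, so there is nothing to compare against. The only cosmetic point is that the object you push forward should be the sheaf-Hom $R\mathcal{H}om_X(a,b) \cong a^\vee \otimes^L b$ rather than the global $R\text{Hom}$, and the identification $(a^\vee)^\vee \cong a$ you use in the tensor--Hom step relies on $a$ being perfect, which indeed holds because $X$ is smooth; with that understood, every step goes through.
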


If $\tilde X$ is a smooth Deligne-Mumford stack
with projective coarse moduli space, then $D^b(\text{coh}(\tilde X))$ has also a Serre functor
(see \S \ref{quotient}).

\begin{Thm}[\cite{DK}]\label{converse1}
Let $X$ and $Y$ be smooth projective varieties.
Assume that there is an equivalence of triangulated categories
$\Phi: D^b(\text{coh}(X)) \to D^b(\text{coh}(Y))$.
Assume in addition that $\kappa(X,K_X) = \dim X$ or $\kappa(X,-K_X) = \dim X$
Then $X \sim_K Y$, i.e., 
there exists a projective variety $Z$ with birational morphisms 
$p: Z \to X$ and $q: Z \to Y$ such that $p^*K_X \sim q^*K_Y$.
\end{Thm}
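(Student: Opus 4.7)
The plan is to use Orlov's representability theorem (Theorem~\ref{Orlov}) to turn $\Phi$ into a geometric correspondence, and then extract the desired variety $Z$ from the support of the kernel. By Theorem~\ref{Orlov} the equivalence $\Phi$ is represented by a kernel $e\in D^b(\text{coh}(X\times Y))$. Because the Serre functor is categorically intrinsic and $\Phi$ is an equivalence, we have $\Phi\circ S_X\cong S_Y\circ\Phi$; comparing the shift parts forces $\dim X=\dim Y=:n$, and translating the intertwining into the language of kernels gives
\[
e\otimes p_1^*\omega_X \;\cong\; e\otimes p_2^*\omega_Y
\]
in $D^b(\text{coh}(X\times Y))$, which iterates to $e\otimes p_1^*\omega_X^{\otimes m}\cong e\otimes p_2^*\omega_Y^{\otimes m}$ for every integer $m$.

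Next I would identify a component of the support of $e$ birational to both factors. Let $Z\subset X\times Y$ be an irreducible component of $\mathrm{Supp}(e)$, take a resolution $\pi:\tilde Z\to Z$, and set $p=p_1\circ\pi$ and $q=p_2\circ\pi$. At the generic point $\eta$ of $Z$, the stalk $e_\eta$ is a nonzero graded vector space, so restricting the identity above to $\tilde Z$ and passing to the line-bundle part yields a linear equivalence
\[
p^*K_X \sim q^*K_Y
\]
on $\tilde Z$. Hence once $Z$ can be chosen so that both $p$ and $q$ are birational, $\tilde Z$ witnesses the $K$-equivalence of $X$ and $Y$.

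The heart of the argument, and the main obstacle, is to produce such a $Z$; this is where the hypothesis $\kappa(X,\pm K_X)=n$ is used. Suppose first that $K_X$ is big. A component of $\mathrm{Supp}(e)$ dominating $X$ exists because the convolution of kernels representing $\Phi^{-1}\circ\Phi\cong\mathrm{id}$ is $\mathcal{O}_{\Delta_X}$, so the diagonal must lie in the support of the convolution, forcing some component of $\mathrm{Supp}(e)$ to surject onto $X$. For such a $Z$, bigness of $K_X$ combined with $p^*K_X\sim q^*K_Y$ shows that $q^*K_Y$ is big on $\tilde Z$, which makes $q$ generically finite; a dimension count then yields $\dim Z=n$ and generic finiteness of $p$. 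Finally, the same convolution identity, applied at the generic points of $Z$, pins the generic rank of $e$ along $Z$ down to $1$, so $p$ and $q$ both have degree one, i.e.\ they are birational. The case $\kappa(X,-K_X)=n$ is entirely parallel after replacing $\omega$ by $\omega^{-1}$ throughout.

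All the geometric content is compressed into the last paragraph: converting the categorical input---Orlov's kernel together with the Serre-functor intertwining---into the numerical identity $p^*K_X\sim q^*K_Y$ on $\tilde Z$ is formal, but upgrading the generically finite $p,q$ to birational morphisms, by using bigness of the (anti-)canonical class and extracting a single component of generic rank one from the convolution $e^\vee\ast e\cong\mathcal{O}_{\Delta_X}$, is the delicate step where the hypothesis does the real work.
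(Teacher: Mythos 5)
Your proposal follows the paper's argument exactly: invoke Orlov's representability to obtain a kernel $e$, use the intrinsic nature of the Serre functor to deduce $p_1^*\omega_X\otimes e\cong p_2^*\omega_Y\otimes e$, and take for $Z$ an irreducible component of $\mathrm{Supp}(e)$ dominating $X$. The paper defers the verification that $p$ and $q$ are birational to \cite{DK}, and your sketch of that step (bigness of $\pm K_X$ forcing generic finiteness, plus the convolution identity $e^\vee\ast e\cong\mathcal{O}_{\Delta_X}$ forcing degree one) is the same mechanism used there.
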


\begin{proof}
Since the Serre functors $S_X$ and $S_Y$ are intrinsic for the derived categories,
they are compatible with the equivalence: $\Phi \circ S_X \cong S_Y \circ \Phi$.
If $e \in D^b(\text{coh}(X \times Y))$ is the kernel of $\Phi$, then we have 
$p_1^*\omega_X \otimes e \cong p_2^*\omega_Y \otimes e$.
It is sufficient to take $Z$ to be an irreducible component of the support of $e$ which dominates $X$ 
(\cite{DK}).
\end{proof}

\begin{Rem}\label{Uehara}
There are two rational elliptic surfaces $f: X \to \mathbf{P}^1$ and $g: Y \to \mathbf{P}^1$ such that 
$D^b(\text{coh}(X)) \cong D^b(\text{coh}(Y))$ but $X \not\sim_K Y$ (\cite{Uehara}).
In this case, we have $K_X \sim f^*\mathcal{O}_{\mathbf{P}^1}(-1)$ and 
$K_Y \sim g^*\mathcal{O}_{\mathbf{P}^1}(-1)$.
If we take $Z = X \times_{\mathbf{P}^1} Y$, then we have $p^*K_X \sim q^*K_Y$.
\end{Rem}

\begin{Que}\label{converse2}
Let $X$ and $Y$ be smooth projective varieties of the same dimension.
Assume that there is a fully faithful functor of triangulated categories
$\Phi: D^b(\text{coh}(X)) \to D^b(\text{coh}(Y))$.
Assume in addition that $\kappa(X,K_X) = \dim X$ or $\kappa(X,-K_X) = \dim X$
Then is it true that $X \le_K Y$, i.e., does there exist a projective variety $Z$ with birational morphisms 
$p: Z \to X$ and $q: Z \to Y$ such that $p^*K_X \le q^*K_Y$?
We note that $S_X \cong \Phi^! \circ S_Y \circ \Phi$ for the right adjoint $\Phi^!$ of $\Phi$.
\end{Que}

\section{Factorization of a birational map into elementary ones by MMP}\label{factorizationMMP}

We use the \lq\lq log version'' of the minimal model program, because it is more general and 
useful than the non-log version.
In this case we consider pairs $(X,B)$ consisting of a variety $X$ and an $\mathbf{R}$-divisor $B$,
called a {\em boundary},
with $\mathbf{Q}$-factorial KLT singularities
instead of varieties $X$ with $\mathbf{Q}$-factorial terminal singularities (see definitions below).
If we put $B = 0$, then we are reduced to the non-log case.

\begin{Defn}
Let $X$ be a normal variety and let $B$ be an $\mathbf{R}$-divisor, i.e., a formal linear combination 
$B = \sum_i b_iB_i$ where
the $b_i$ are real numbers and the $B_i$ are mutually distinct prime divisors on $X$.
The pair $(X,B)$ is said to be {\em KLT} (resp. {\em terminal}) if the following condition is satisfied:

\begin{enumerate}

\item The coefficients $b_i$ of $B$ belong to the interval $(0,1)$. 

\item $K_X+B$ is an $\mathbf{R}$-Cartier divisor, a linear combination of Cartier divisors with 
coefficients in $\mathbf{R}$.

\item Let $f: Y \to X$ be a log resolution, a resolution of singularities such that the union of the 
inverse image of the support of $B$ and the exceptional locus 
is a simple normal crossing divisor.
Write $f^*(K_X+B) = K_Y + C$.  
Then the coefficients of $C$ (resp. the coefficients of $C$ except those in the strict transform $f_*^{-1}B$) 
belongs to the interval $(-\infty, 1)$ (resp. $(-\infty, 0)$).
\end{enumerate}

A variety $X$ is said to have terminal singularities if the pair $(X,0)$ is terminal. 
$X$ is {\em $\mathbf{Q}$-factorial} if any prime divisor on $X$ is $\mathbf{Q}$-Cartier, i.e., 
for a prime divisor $D$, there exists a positive integer $m$ such that $mD$ is a Cartier divisor.
\end{Defn}

\begin{Defn}
Let $(X,B)$ and $(Y,C)$ be KLT pairs.
A birational map $f: X \dashrightarrow Y$ is said to be a {\em $K$-equivalence} 
(resp. {\em $K$-inequality}) if 
there is another variety $Z$ with projective birational morphisms
$g: Z \to X$ and $h: Z \to Y$ such that $f = h \circ g^{-1}$ and $g^*(K_X+B) = h^*(K_Y+C)$
(resp. $g^*(K_X+B) \ge h^*(K_Y+C)$), where the canonical divisors $K_X$ and $K_Y$ are defined by using the 
rational differential forms identified each other by $f$.
In this case we write $(X,B) \sim_K (Y,C)$ (resp. $(X,B) \ge_K (Y,C)$).
When $B = C = 0$, we simply write $X \sim_K Y$ or $X \ge_K Y$, etc.
\end{Defn}

We note that the relations $(X,B) \sim_K (Y,C)$ and $(X,B) \ge_K (Y,C)$ are defined 
only if a birational map $f$ is fixed, while $Z$ can be replaced by any other higher model.
We write $(X,B) >_K (Y,C)$ for $(X,B) \ge_K (Y,C)$ and $(X,B) \not\sim_K (Y,C)$.

\vskip 1pc

The elementary birational maps in the MMP are divisorial contractions or flips, 
and they decrease $K_X$ or $K_X+B$.

The factorization by MMP has advantage (monotone) and disadvantage (singularities):

\begin{itemize}
\item The (log) canonical divisor $K_X$ or $K_X+B$ is constantly decreasing.

\item Singularities such as $\mathbf{Q}$-factorial terminal or KLT appear.
\end{itemize}

The following is the definition of an extended family of flips:

\begin{Defn}
A {\em flip} (in a generalized sense) $f: X \dashrightarrow Y$ is a birational map of varieties which satisfies the following conditions:

\begin{enumerate}
\item There is another variety $W$ with projective birational morphisms 
$g: X \to W$ and $h: Y \to W$ such that $f = h^{-1} \circ g$.

\item $g$ and $h$ are small, i.e., isomorphisms in codimension $1$.

\item $g$ and $h$ are elementary in the sense that their relative Picard numbers are $1$; 
$\rho(X/W) = \rho(Y/W) = 1$.
\end{enumerate}
\end{Defn}

We note that, if the morphism $g: X \to W$, called a {\em small contraction}, is given, then 
the other morphism $h: Y \to W$ is uniquely determined if it exists.
This is a consequence of the third condition.
For a given pair $(X,B)$, the choices of contractions are discrete and controlled by 
extremal rays.   

When $X$ is equipped with an $\mathbf{R}$-divisor $B$, we set $C = f_*B$, the strict transform.
We usually assume moreover that $K$ decreases; $(X,B) >_K (Y,C)$.
This case should be called a {\em flip in a strict sense}.
If $K$ stays the same, i.e., $(X,B) \sim_K (Y,C)$, the a flip is called a {\em flop}.
In this case, if we add a small effective $\mathbf{R}$-divisor to $B$, then a flop can be made a flip
in a strict sense.
Moreover a rational map which preserves $K$ and is elementary in certain sense can be called 
a {\em flop} in a generalized sense. 
It often happens that a birational map or even a fiber space can be made to be a flop
by adding a boundary.

The divisorial contractions have similar extended family.

\begin{Defn}
A {\em divisorial contraction} $f: X \to Y$ is a projective birational morphism whose exceptional locus is 
a prime divisor.
\end{Defn}

We set $C = f_*B$, the strict transform. 
We assume usually that $-(K_X+B)$ is $f$-ample, i.e., $(X,B) >_K (Y,C)$.
But such a morphism with $K_X+B$ being $f$-trivial or $f$-ample is also important.
If $K_X+B$ is $f$-trivial, then it can be called a flop in a generalized sense.

The inverse of a divisorial contraction is called a {\em divisorial extraction}.
The direction is reversed from the contraction.

By \cite{BCHM}, there is a {\em $\mathbf{Q}$-factorial terminalization} of a KLT pair:

\begin{Thm}
(1) Let $(X,B)$ be a KLT pair.
Then there exists another KLT pair $(Y,C)$ with a projective birational morphism $f: Y \to X$ which is {\em small},
i.e., an isomorphism in codimension $1$,
such that $Y$ is $\mathbf{Q}$-factorial.
It is automatic that $f^*(K_X+B) = K_Y+C$ in this case.

(2) Let $(X,B)$ be a $\mathbf{Q}$-factorial KLT pair. 
Then there exists a sequence of $\mathbf{Q}$-factorial KLT pairs $\{(X_i,B_i)\}_{0 \le i \le m}$ with 
divisorial extractions $f_i: X_i \to X_{i-1}$ such that $(X,B) = (X_0,B_0)$, $(X_i,B_i) \sim_K (X_{i-1},B_{i-1})$
and $(X_m,B_m)$ is terminal.
\end{Thm}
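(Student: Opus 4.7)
The plan is to reduce both parts to the existence and termination of suitable relative MMPs for KLT pairs with big boundary, which is exactly what \cite{BCHM} provides.

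For part (1), I would start with a log resolution $\mu: W \to X$ and write $K_W + \mu_*^{-1}B = \mu^*(K_X+B) + \sum_i a_i E_i$, where $a_i > -1$ by the KLT assumption. Form the boundary
\[
D := \mu_*^{-1}B + \sum_i (1-\epsilon) E_i
\]
for a small rational $\epsilon > 0$; then $(W, D)$ is KLT and $D$ is big over $X$, so by \cite{BCHM} the relative $(K_W + D)$-MMP over $X$ exists and terminates with a minimal model $f: Y \to X$. The correction
\[
K_W + D - \mu^*(K_X + B) = \sum_i (1 + a_i - \epsilon) E_i
\]
is effective and $\mu$-exceptional, so the negativity lemma forces every $E_i$ to be contracted in the course of the MMP. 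Hence $f$ is small, $Y$ is $\mathbf{Q}$-factorial (preserved by every MMP step), $(Y, C)$ with $C := f_*^{-1}B$ is KLT, and $f^*(K_X + B) = K_Y + C$ by another application of the negativity lemma.

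For part (2), the strategy would be to extract non-terminal divisors one at a time. If $(X_{i-1}, B_{i-1})$ is already terminal we stop; otherwise pick an exceptional divisorial valuation $E$ over $X_{i-1}$ with discrepancy $a := a(E; X_{i-1}, B_{i-1}) \in (-1, 0]$ and realize it on a log resolution $\mu: W \to X_{i-1}$. Set
\[
D := \mu_*^{-1}B_{i-1} + (-a) E + \sum_{F \ne E} (1-\epsilon) F,
\]
where the sum ranges over the other $\mu$-exceptional divisors. Then $(W, D)$ is KLT, and the correction
\[
K_W + D - \mu^*(K_{X_{i-1}} + B_{i-1}) = \sum_{F \ne E} (1 + a(F) - \epsilon) F
\]
is effective, $\mu$-exceptional, and supported exactly on the $F \ne E$. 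A relative MMP from \cite{BCHM} then contracts precisely those $F \ne E$ and yields a $\mathbf{Q}$-factorial divisorial extraction $f_i: X_i \to X_{i-1}$ whose exceptional divisor is $E$. Setting $B_i := (f_i)_*^{-1} B_{i-1} + (-a) E$ produces a $\mathbf{Q}$-factorial KLT pair with $K_{X_i} + B_i = f_i^*(K_{X_{i-1}} + B_{i-1})$, so $(X_i, B_i) \sim_K (X_{i-1}, B_{i-1})$.

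The iteration terminates for the following reason: log discrepancies are invariant under crepant birational maps, so the set of exceptional valuations of $(X, B)$ with discrepancy in $(-1, 0]$ also governs the non-terminal valuations of each $(X_i, B_i)$ that are still exceptional over the current model. For a KLT pair this set is finite, and each extraction removes one of its elements from the \lq\lq still exceptional'' list. The main technical obstacle in both parts is the execution and termination of the MMPs invoked, which is precisely the content of \cite{BCHM}; a more delicate but routine point is choosing the boundary coefficients so that the MMP contracts exactly the intended set of exceptional divisors, which is controlled by the negativity lemma applied to the effective $\mu$-exceptional correction $K_W + D - \mu^*(K + B)$.
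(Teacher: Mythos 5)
The paper gives no argument for this theorem at all---it is stated as a direct citation of \cite{BCHM}---and your proposal is a correct reconstruction of the standard proof that lives behind that citation (log resolution, boundary $\mu_*^{-1}B+\sum(1-\epsilon)E_i$ adjusted at the valuation to be kept, relative MMP over $X$, negativity lemma), so in substance you are taking the same route as the paper. One small misattribution: the negativity lemma only shows that every component of the effective exceptional correction $N=K_W+D-\mu^*(K+B)$ \emph{is} contracted by the time $K+D$ is relatively nef; the complementary fact that $E$ (and nothing outside $\operatorname{Supp}N$) \emph{survives} needs the separate, easy observation that each step of this MMP is $N$-negative, and a divisorial contraction of a ray $R$ with $N\cdot R<0$ can only contract a divisor swept out by curves lying in $\operatorname{Supp}N$, hence only a component of $N$. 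With that remark added, both parts are complete modulo the finiteness of exceptional valuations of discrepancy $\le 0$ for a KLT pair, which is standard.
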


We state a factorization conjecture of a birational map between $K$-equivalent pairs:

\begin{Conj}\label{factorization of K}
Let $(X,B)$ and $(Y,C)$ be projective $\mathbf{Q}$-factorial terminal pairs.
Assume that $(X,B) \sim_K (Y,C)$ by a birational map $f: X \dashrightarrow Y$.
Then there is a sequence of $\mathbf{Q}$-factorial terminal pairs $\{(X_i,B_i)\}_{0 \le i \le n}$ with $(X_0,B_0) = (X,B)$ 
and $(X_n,B_n) = (Y,C)$ such that 
there are flops $f_i: X_{i-1} \dashrightarrow X_i$ with $(X_{i-1},B_{i-1}) \sim_K (X_i,B_i)$ and $f = f_n \circ \dots \circ f_1$.
\end{Conj}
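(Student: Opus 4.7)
The plan is to realize the factorization as the sequence of steps in a minimal model program on $X$ guided by an ample divisor on $Y$, exploiting the $K$-equivalence to force each step to be $(K_X+B)$-trivial, hence a flop.

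Concretely, I would take a common resolution $p \colon Z \to X$ and $q \colon Z \to Y$ realizing $p^*(K_X+B) = q^*(K_Y+C)$, fix a sufficiently general very ample $\mathbf{Q}$-divisor $A$ on $Y$, and set $A_X = f^{-1}_* A$ on $X$. Then $A_X$ is big on $X$, and for small $\epsilon > 0$ the pair $(X, B + \epsilon A_X)$ remains $\mathbf{Q}$-factorial terminal. I would then run the $(K_X + B + \epsilon A_X)$-MMP with scaling of some ample divisor. Because $B + \epsilon A_X$ contains the big component $\epsilon A_X$, BCHM ensures that this program is well-defined and terminates at a minimal model $(X^*, B^* + \epsilon A^*_X)$ on which $K_{X^*} + B^* + \epsilon A^*_X$ is semi-ample.

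Three things then need to be checked. First, no step can be a divisorial contraction: by the negativity lemma applied to $p^*(K_X+B) - q^*(K_Y+C) = 0$ combined with terminality, any $f$-exceptional prime divisor would have equal discrepancies on the two sides, forcing its $B$-coefficient to be negative, which is impossible. Hence $f$ is already small, the Picard numbers of $X$ and $Y$ coincide, and the MMP can only perform small contractions (i.e., flips). Second, each extremal ray $R_i$ contracted at step $i$ should satisfy $(K_{X_{i-1}} + B_{i-1}) \cdot R_i = 0$, so that the corresponding flip is a flop for $(X_{i-1}, B_{i-1})$ and the chain of $K$-equivalences $(X, B) \sim_K (X_1, B_1) \sim_K \cdots \sim_K (Y, C)$ is inductively maintained. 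Third, the final model is to be identified with $(Y, C + \epsilon A)$ via uniqueness of minimal models with big boundary, using that $K_Y + C + \epsilon A$ is ample for generic $\epsilon$; this identification also shows that the composite $X \dashrightarrow X^*$ coincides with $f$.

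The hard part will be the second claim: showing that every extremal ray produced during the MMP already lies in the hyperplane cut out by $K_X + B$ in the relative numerical cone, so that the negativity is carried entirely by the perturbation $\epsilon A_X$. Morally this holds because, via $f$, the divisor $K_X + B$ is a pullback of $K_Y + C$ and hence is numerically trivial on $f$-contracted curves; but turning this intuition into a rigorous statement in the KLT/terminal setting requires tracking the relation $p^*(K_X+B) = q^*(K_Y+C)$ through every intermediate model of the MMP and ruling out spurious extremal rays created by the perturbation $\epsilon A_X$ itself. This is the technical core of the conjecture and, in full generality, remains open.
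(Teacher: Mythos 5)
The statement you are proving is stated in the paper as a \emph{conjecture}, and the paper offers no proof of it: immediately after Conjecture~\ref{factorization of K} the author records only two cases in which it is known, namely when $K_X+B$ is nef (\cite{flops-connect}) and when $f$ is a toric birational map between toric pairs (\cite{toricII}). Your proposal is essentially a reconstruction of the argument of \cite{flops-connect} for the first of these cases: perturb $B$ by the strict transform $\epsilon A_X$ of an ample divisor on $Y$, run the resulting MMP using BCHM, and check that every step is $(K_X+B)$-trivial. Your first claim (that a crepant birational map between terminal pairs is small) is correct and is proved in the paper by exactly the discrepancy comparison you describe, and your honest flagging of the second claim as the unresolved core is accurate.

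The genuine gap is that both your second and third claims require the nefness hypothesis that you have dropped. Without $K_Y+C$ nef, there is nothing to prevent the $(K_X+B+\epsilon A_X)$-MMP from contracting an extremal ray $R$ with $(K_X+B)\cdot R<0$ (the $K$-equivalence $p^*(K_X+B)=q^*(K_Y+C)$ does \emph{not} imply that $K_X+B$ is numerically trivial on curves in the exceptional locus of $f$, since such a curve can map to a nontrivial curve class on $Y$); after one such step the intermediate pair satisfies $(X_1,B_1)<_K(X,B)\sim_K(Y,C)$ and can never be reconnected to $(Y,C)$ by flops. Moreover your endpoint identification collapses as well: if $K_Y+C$ is not nef then $K_Y+C+\epsilon A$ is not nef for small $\epsilon$, so $(Y,C+\epsilon A)$ is not a minimal model of $(X,B+\epsilon A_X)$ and the MMP has no reason to terminate there. (A smaller ordering issue: ruling out divisorial contractions in the MMP does not follow from $f$ being small alone; one first needs each step to be crepant, and then applies the smallness of crepant maps to the intermediate model.) In short, your argument proves only the already-known nef case, and the general statement remains, as in the paper, an open conjecture rather than a theorem.
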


The following partial answers are known: 

\begin{Thm}
Conjecture~\ref{factorization of K} is true 
in one of the following cases:

\begin{enumerate}

\item $K_X+B$ is nef (\cite{flops-connect}).

\item $f$ is a toric birational map between toric pairs (\cite{toricII}).

\end{enumerate}
\end{Thm}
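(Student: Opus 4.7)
The plan is to handle the two cases with complementary techniques: case (1) uses a minimal model program with scaling to interpolate between the two nef minimal models, while case (2) reduces to a combinatorial argument on fans.

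For case (1), the key observation is that both $(X,B)$ and $(Y,C)$ are minimal models: $K_X+B$ and $K_Y+C$ are nef, and both pairs are $\mathbf{Q}$-factorial terminal. I would begin by fixing an ample divisor $H$ on $Y$ and letting $H_X = f^{-1}_*H$ denote its strict transform on $X$. Since $H$ is ample on $Y$ but $f$ is only birational, $H_X$ is movable but in general not nef on $X$. The strategy is to run the MMP with scaling of $H_X$ starting from $(X, B)$, i.e. to consider $(K_X + B + t H_X)$ for $t$ decreasing from some large positive value. Because $K_X+B$ is already nef, at each critical value of the scaling parameter the contracted extremal ray $R$ satisfies $(K_X+B)\cdot R = 0$, so the step is a flop with respect to $K_X+B$ rather than a strict flip. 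Moreover, no divisorial contraction can occur: the $K$-equivalence identifies the prime divisors of $X$ and $Y$ bijectively (all exceptional divisors on a common resolution have matching log discrepancies, and terminality forces both $p$ and $q$ to extract precisely the same divisors), and discarding a divisor in the MMP would destroy this bijection. Termination via \cite{BCHM} then yields $Y$ as the final output, producing the required sequence of flops.

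For case (2), the argument is purely combinatorial. Write $X = X_{\Sigma_X}$ and $Y = X_{\Sigma_Y}$ for fans $\Sigma_X, \Sigma_Y$ in a common lattice $N$, with the boundaries $B$ and $C$ encoded as piecewise linear functions on the respective supports. The $K$-equivalence condition translates into the statement that the piecewise linear functions on $|\Sigma_X| = |\Sigma_Y|$ representing $K_X+B$ and $K_Y+C$ coincide. The toric MMP proceeds by explicit local modifications of fans, each corresponding to a single wall change (a bistellar modification across a common wall of two maximal cones). One can factor $f$ into a sequence of such elementary toric birational maps $(X_{i-1}, B_{i-1}) \dashrightarrow (X_i, B_i)$, and the agreement of the canonical piecewise linear functions on both sides of each wall ensures that every step is a toric flop.

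The principal obstacle in case (1) is verifying that the scaled MMP produces only flops, with no divisorial contractions; this depends on the combination of terminality with the bijective correspondence of prime divisors under $K$-equivalence, and on carefully tracking how $K_X+B$ pairs against each extracted extremal ray throughout the program. In case (2), the difficulty is the bookkeeping required to maintain the $K$-equivalence at every intermediate fan, especially when the boundary $B$ is nonzero and the piecewise linear functions must be tracked through each bistellar move.
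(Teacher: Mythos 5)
The paper does not actually prove this theorem; it quotes it with references to \cite{flops-connect} for (1) and \cite{toricII} for (2), and your case (1) is modelled on the strategy of the former. Still, both halves of your proposal have real gaps. In case (1), the phrase \lq\lq MMP with scaling of $H_X$ for $t$ decreasing from some large positive value'' does not parse: $H_X$ is only movable, so $K_X+B+tH_X$ need not be nef for any $t>0$ and the scaling formalism of \cite{BCHM} cannot start there. The correct setup is to fix $0<\epsilon\ll 1$ so that $(X,B+\epsilon H_X)$ is still terminal, observe via the negativity lemma that $g^*(K_X+B+\epsilon H_X)=h^*(K_Y+C+\epsilon H)+\epsilon E$ with $E\ge 0$ exceptional, so that $(Y,C+\epsilon H)$ is the unique ample model of $(X,B+\epsilon H_X)$, and then run that MMP with scaling of an auxiliary ample divisor, using termination and uniqueness of the ample model to land on $Y$. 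More importantly, your justification that every step is a flop is incomplete: nefness of $K_{X_i}+B_i$ only gives $(K_{X_i}+B_i)\cdot R\ge 0$ for the contracted ray $R$, and you must separately exclude $(K_{X_i}+B_i)\cdot R>0$. One way is to note that a strictly $(K+B)$-positive step would strictly increase some discrepancy, forcing $(X,B)>_K(Y,C)$ at the end and contradicting $K$-equivalence; your observation that $K$-equivalent terminal pairs are isomorphic in codimension one is correct and disposes of divisorial contractions, but the small contractions still need this comparison with $(Y,C)$.

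Case (2) as written is essentially circular. You assert that one can factor $f$ into elementary toric maps and that \lq\lq the agreement of the canonical piecewise linear functions on both sides of each wall ensures that every step is a toric flop,'' but the support function of $K_{X_i}+B_i$ on an intermediate fan $\Sigma_i$ is computed from $\Sigma_i$ itself and need not agree with the common function $g^*(K_X+B)=h^*(K_Y+C)$; a generic path of wall crossings in the secondary fan joining $\Sigma_X$ to $\Sigma_Y$ moves $K+B$ both up and down. Producing a path along which every crossing is $K$-trivial is precisely the content of the theorem. Note also that in this case $K_X+B$ is not assumed nef, so the mechanism of case (1) is not available verbatim; this is why \cite{toricII} runs a specifically chosen (and automatically terminating) toric MMP rather than appealing to an arbitrary combinatorial factorization.
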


We can make the conjecture even harder for $K$-inequal pairs:

\begin{Conj}\label{factorization of K2}
Let $(X,B)$ and $(Y,C)$ be projective $\mathbf{Q}$-factorial terminal pairs.
Assume that $(X,B) \ge (Y,C)$ by a birational map $f: X \dashrightarrow Y$.
Then there is a sequence of pairs $\{(X_i,B_i)\}_{0 \le i \le n}$ with $(X_0,B_0) = (X,B)$ and 
$(X_n,B_n) = (Y,C)$ which satisfies the following conditions:

\begin{enumerate}
\item There are birational maps $f_i: X_{i-1} \dashrightarrow X_i$ such that 
$(X_{i-1},B_{i-1}) \ge_K (X_i,B_i)$ and $f = f_n \circ \dots \circ f_1$.  

\item Each $f_i$ is one of the following elementary maps; a divisorial contraction, 
a flip (in a generalized sense) or an isomorphism.

\end{enumerate}
\end{Conj}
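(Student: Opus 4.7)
The plan is to reduce Conjecture~\ref{factorization of K2} to Conjecture~\ref{factorization of K} by peeling off a sequence of strictly $K$-decreasing elementary steps via a relative MMP with scaling, and then treating the residual $K$-equivalent part using a flop factorization.

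First I would fix a common log resolution $(Z, g, h)$ with $g: Z \to X$, $h: Z \to Y$, and set $E := g^*(K_X+B) - h^*(K_Y+C) \ge 0$. Choose an ample divisor $A_Y$ on $Y$, and let $H$ on $X$ be its strict transform, which is big. For $t \gg 0$, $K_Y + C + tA_Y$ is ample, so $(Y, C + tA_Y)$ is the log canonical model of its birational class; up to bookkeeping of $g$-exceptional divisors absorbed into $E$, the original $K$-inequality propagates to $(X, B + tH) \ge_K (Y, C + tA_Y)$, so both pairs share this canonical model. I then run the $(K_X + B + tH)$-MMP with scaling in the sense of \cite{BCHM}, letting $t$ decrease from a sufficiently large initial value down to $0$. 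At each critical value of the scaling parameter the birational transformation is either a divisorial contraction or a flip in the strict sense for the scaled boundary, and for a generic choice of $A_Y$ it is also either strictly $K$-decreasing or $K$-equivalent for the unscaled boundary $B$. At $t = 0$ we arrive at a pair $(X', B')$ which is a relative minimal model of $(X, B)$ along $f$ and, by construction, satisfies $(X', B') \sim_K (Y, C)$.

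It then remains to factor the residual $K$-equivalence $(X', B') \sim_K (Y, C)$ into flops; this is precisely Conjecture~\ref{factorization of K}, which is unconditional when $K_{X'} + B'$ is nef by \cite{flops-connect}. A cleaner route avoids invoking Conjecture~\ref{factorization of K} altogether: perturb the boundary by a small effective $\mathbf{R}$-divisor $B_0$ so that every intermediate flop becomes a strict flip with respect to $B + B_0$, perform a single MMP with scaling for $(X, B + B_0 + tH)$, and take $B_0 \to 0$; since Conjecture~\ref{factorization of K2} already permits flips in the generalized sense, the limiting factorization is of the allowed form. The principal obstacles, in increasing order of seriousness, are: preservation of $\mathbf{Q}$-factorial terminal singularities across intermediate models (KLT is stable under MMP steps, but terminality is more delicate and may force auxiliary terminalizations); termination of the $\mathbf{R}$-boundary MMP with scaling beyond the hypotheses secured in \cite{BCHM}; and, in the perturbation variant, a genuine limiting argument showing that the factorizations indexed by $B_0 > 0$ stabilize to a finite factorization of the original map $f$ rather than accumulating. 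The latter two form the crux: termination is where the standard toolkit ends, and the limit argument is what makes the shortcut around Conjecture~\ref{factorization of K} actually buy something new.
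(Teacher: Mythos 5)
Note first that the statement you are proving is stated in the paper as a \emph{conjecture}: the paper does not prove Conjecture~\ref{factorization of K2} in general, but only a special case under the additional hypotheses that $C=f_*B$, that $K_Y+C$ is nef, and that an MMP for $(X,B)$ terminates. In that special case the argument is to run the plain $(K_X+B)$-MMP, to check via Lemma~\ref{preserve} (a Zariski-decomposition-in-codimension-one comparison, which uses the nefness of $K_Y+C$ in an essential way) that the relation $\ge_K(Y,C)$ is preserved at each step, and then to conclude with \cite{flops-connect} once the two sides become $K$-equivalent. Your proposal aims higher, trying to dispense with the nefness of $K_Y+C$ by aiming the MMP at $Y$ through scaling by the strict transform $H$ of an ample divisor $A_Y$; it is exactly at the points where you go beyond the paper's hypotheses that the argument breaks.

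The concrete gaps are these. First, the asserted propagation $(X,B+tH)\ge_K(Y,C+tA_Y)$ is false in general: on a common resolution one has $g^*H-h^*A_Y=\sum(b_i-a_i)E_i$ with no control on the signs, and in the simplest case where $f$ is a blow-down and $Z=X$ one gets $g^*H=h^*A_Y-m\,E$ with $m>0$, so for $t\gg0$ the inequality fails along $E$; the starting point of the scaling argument therefore does not exist. Second, even granting a well-defined $(K_X+B+tH)$-MMP with scaling, a contracted extremal ray $R$ satisfies $(K_X+B)\cdot R=-\lambda H\cdot R$, so the step is $(K_X+B)$-non-increasing only when $H\cdot R\ge0$; this holds for nef $H$, but the strict transform of an ample divisor is merely big, and ``generic choice of $A_Y$'' does not make it nef. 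For the same reason terminality of the unscaled pair need not be preserved: a $(K_X+B)$-MMP increases discrepancies, but a $(K_X+B+tH)$-MMP step that is $(K_X+B)$-positive can destroy terminality, which is why the paper can take preservation of terminality for granted and you cannot. Third, the two remaining ingredients you flag --- termination of the $\mathbf{R}$-boundary MMP with scaling beyond \cite{BCHM}, and the stabilization of the factorizations as $B_0\to0$ in your perturbation variant --- are not technicalities but open problems; the paper avoids the first by making termination a hypothesis and does not attempt the second. As written, the proposal neither proves the conjecture nor recovers the paper's partial theorem, since even under the paper's extra hypotheses the scaling construction in your first paragraph is not available.
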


\begin{Defn}
Let $D$ be a pseudo-effective $\mathbf{R}$-divisor on a $\mathbf{Q}$-factorial projective variety 
$X$ with an ample divisor $A$.
Let $F = \lim_{\epsilon \to 0} \text{Fix}\Vert D + \epsilon A \Vert$, where 
$\text{Fix}\Vert D + \epsilon A \Vert = \min \{D' \mid D' \equiv D + \epsilon A, D' \ge 0\}$ 
and $\equiv$ denotes the numerical equivalence.
Let $M = D - F$.
We call an expression $D = M + F$ a {\em Zariski decomposition in codimension $1$}.
\end{Defn}

For example, if $D$ is nef, then $F = 0$.

\begin{Lem}\label{preserve}
Let $(X,B)$ and $(Y,C)$ be projective $\mathbf{Q}$-factorial terminal pairs such that $(X,B) \ge_K (Y,C)$.
Let $f: (X,B) \dashrightarrow (X',B')$ be a divisorial contraction or a flip such that 
$B' = f_*B$ and $(X,B) >_K (X',B')$.
Assume that $(Y,C)$ is minimal, i.e., $K_Y+C$ is nef.
Then $(X',B') \ge_K (Y,C)$.
\end{Lem}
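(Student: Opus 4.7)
The plan is to work on a single common log resolution and to derive the desired inequality on $X'$ by invoking the negativity lemma for the MMP step $f: X \dashrightarrow X'$.

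First I would choose a smooth projective $Z$ with birational morphisms $\mu: Z \to X$, $\mu': Z \to X'$ and $\nu: Z \to Y$ dominating all three pairs (which exists by iterated resolution of the graph closures). The two hypotheses then read, on $Z$,
\[
G := \mu^*(K_X+B) - \nu^*(K_Y+C) \ge 0, \qquad E := \mu^*(K_X+B) - \mu'^*(K_{X'}+B') \ge 0.
\]
Taking the difference gives $\mu'^*(K_{X'}+B') - \nu^*(K_Y+C) = G - E$, so the desired conclusion $(X',B') \ge_K (Y,C)$ is equivalent to $G - E \ge 0$ on $Z$.

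The crucial observation is that $E$ is $\mu'$-exceptional, i.e.\ $\mu'_*E = 0$. Indeed, $f$ is an isomorphism in codimension one away from its exceptional locus, so the pushforward of the Weil divisor $K_X+B$ along $f$ is precisely $K_{X'}+B'$ (the canonical divisor $K_X$ pushes to $K_{X'}$, any $f$-exceptional prime divisor in the divisorial case being killed, and $B' = f_*B$ by assumption). Hence $\mu'_*\mu^*(K_X+B) = f_*(K_X+B) = K_{X'}+B'$, so $\mu'_*E = 0$, and therefore $\mu'_*(G - E) = \mu'_*G \ge 0$.

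To turn this pushforward inequality into an actual inequality of divisors on $Z$, I would use the assumed nefness of $K_Y+C$. Since $\nu^*(K_Y+C)$ is nef on $Z$ and $\mu'^*(K_{X'}+B')$ is numerically $\mu'$-trivial, the divisor $N := \nu^*(K_Y+C) - \mu'^*(K_{X'}+B') = E - G$ is $\mu'$-nef. Its pushforward satisfies $\mu'_*N = -\mu'_*(G-E) \le 0$. The negativity lemma, applied to $\mu'$ and to $-N$, then forces $N \le 0$, which is exactly $\mu'^*(K_{X'}+B') \ge \nu^*(K_Y+C)$, i.e.\ $(X',B') \ge_K (Y,C)$. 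The principal obstacle is the $\mu'$-exceptionality of $E$: this is the single point where the structure of the MMP step (with the hypothesis $B' = f_*B$) is genuinely used, and it is the reason the lemma would fail for an arbitrary birational $f$ that merely satisfies $(X,B) >_K (X',B')$. The remainder is a formal application of the negativity lemma together with the minimality of $(Y,C)$.
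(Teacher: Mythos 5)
Your proof is correct, and it takes a genuinely different route from the paper's. The paper argues via the Zariski decomposition in codimension $1$ (the $\sigma$-decomposition defined just before the lemma): writing $g^*(K_X+B)=M_1+F_1$, $g'^*(K_{X'}+B')=M_2+F_2$ and $h^*(K_Y+C)=M_3$, it observes that $M_1\ge M_3$ because $M_3$ is nef, and that the discrepancy $F=D_1-D_2$ satisfies $F\le F_1$, whence $D_2=D_1-F\ge M_1\ge M_3$. You instead isolate the divisor $N=\nu^*(K_Y+C)-\mu'^*(K_{X'}+B')$, show it is $\mu'$-nef (using nefness of $K_Y+C$) with $\mu'_*N\le 0$, and conclude $N\le 0$ by the negativity lemma. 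The two arguments rest on the same geometric input -- that $E=\mu^*(K_X+B)-\mu'^*(K_{X'}+B')$ is $\mu'$-exceptional, which is exactly what makes the paper's inequality $F\le F_1$ true and is precisely where the hypothesis that $f$ is a contraction or flip (rather than, say, an extraction) enters -- but your version makes this input explicit and replaces the asymptotic fixed-part formalism by a single application of a standard lemma; it is more self-contained, since the properties of $\text{Fix}\Vert\cdot\Vert$ that the paper uses implicitly are themselves usually established by negativity-type arguments. One small point to tighten: the equality $\mu'_*\mu^*(K_X+B)=f_*(K_X+B)$ needs the observation that every $\mu$-exceptional prime divisor on $Z$ is also $\mu'$-exceptional (immediate for a divisorial contraction since $\mu'=f\circ\mu$, and for a flip because the small morphism $X'\to W$ contracts no divisor); you gesture at this but should state it, as it is the same structural fact you rightly flag as essential. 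Note also that your argument never uses the strict inequality $(X,B)>_K(X',B')$, so it proves the lemma under the slightly weaker hypothesis that $f$ is any such elementary step with $B'=f_*B$.
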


\begin{proof}
We take a sufficiently high model $Z$ such that there are projective birational morphisms
$g: Z \to X$, $g': Z \to X'$ and $h: Z \to Y$.
We consider the Zariski decompositions in codimension $1$ on $Z$; 
we have $g^*(K_X+B) = D_1 = M_1+F_1$, $g'{}^*(K_{X'}+B') = D_2 = M_2+F_2$ and $h^*(K_Y+C) = M_3$.
Since $D_1 \ge M_3$, we have $M_1 \ge M_3$.
We denote $D_1 = D_2 + F$ with $F \ge 0$.
Then we have $F_1 \ge F$.  
Hence $D_2 \ge M_3$.
\end{proof}

\begin{Thm}
Under the assumptions of Conjecture~\ref{factorization of K2}, assume in addition the following:

\begin{enumerate}
\item $C = f_*B$, the strict transform.

\item $K_Y+C$ is nef.

\item A minimal model program for the pair $(X,B)$ terminates.
\end{enumerate}
Then the conclusions of \ref{factorization of K2} hold where the $f_i$ are either divisorial contractions or flips.
\end{Thm}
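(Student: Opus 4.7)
The plan is: apply Lemma~\ref{preserve} inductively along an MMP for $(X,B)$, which terminates by hypothesis~(3), to reach a minimal model of $(X,B)$ that still satisfies $\ge_K(Y,C)$; then invoke case~(1) of the previous theorem to connect this minimal model to $(Y,C)$ by a chain of flops.

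By hypothesis~(3), there is a finite MMP
\[
(X,B)=(X_0,B_0)\overset{f_1}{\dashrightarrow}(X_1,B_1)\overset{f_2}{\dashrightarrow}\cdots\overset{f_n}{\dashrightarrow}(X_n,B_n)
\]
of $\mathbf{Q}$-factorial terminal pairs, where each $f_i$ is a divisorial contraction or a flip in the strict sense, $B_i=(f_i)_*B_{i-1}$, $(X_{i-1},B_{i-1})>_K(X_i,B_i)$, and $K_{X_n}+B_n$ is nef. Hypothesis~(2) says $K_Y+C$ is nef as well, so Lemma~\ref{preserve} applies at every step and yields $(X_i,B_i)\ge_K(Y,C)$ for all $i$ by induction; in particular $(X_n,B_n)\ge_K(Y,C)$.

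I next claim $(X_n,B_n)\sim_K(Y,C)$. Write $g:X_n\dashrightarrow Y$ for the composition $f\circ(f_n\circ\cdots\circ f_1)^{-1}$; by hypothesis~(1) together with the strict-transform property of MMP steps, $g_*B_n=C$. Choose a common log resolution $Z$ with projective birational morphisms $p:Z\to X_n$ and $q:Z\to Y$, and set
\[
E:=p^*(K_{X_n}+B_n)-q^*(K_Y+C),
\]
which is effective by $(X_n,B_n)\ge_K(Y,C)$. Using the discrepancy expansions of the two terminal pairs along $p$ and $q$ and the identification $g_*B_n=C$ (so that the $p$- and $q$-strict transforms of matching components agree on $Z$), one writes $E$ as a signed combination of $p$- and $q$-exceptional prime divisors with strictly positive coefficients. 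Effectivity of $E$ then forces every purely $p$-exceptional divisor with negative sign to cancel against a $q$-exceptional one, so $E$ itself is $q$-exceptional. Since $E$ is moreover $q$-nef (because $q^*(K_Y+C)\cdot\ell=0$ while $p^*(K_{X_n}+B_n)\cdot\ell\ge 0$ on any $q$-exceptional curve $\ell$), the negativity lemma yields $E=0$.

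Case~(1) of the theorem preceding this one (from \cite{flops-connect}) now factors the $K$-equivalence $g$ as a chain of flops, which are flips in the generalized sense of the paper with $\sim_K$ at each step. Concatenating this chain with $f_1,\dots,f_n$ gives the required factorization of $f$ into divisorial contractions and flips, with $\ge_K$ preserved at every step, verifying both parts of Conjecture~\ref{factorization of K2}. The principal obstacle is the equality step $E=0$: it hinges on the strict-transform bookkeeping $g_*B_n=C$ through the MMP and on positivity of terminal discrepancies, before the negativity lemma can close the argument. A subsidiary technical point, that the MMP stays in the $\mathbf{Q}$-factorial terminal category so that Lemma~\ref{preserve} and the flops-connect theorem apply directly, is standard.
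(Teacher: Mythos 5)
Your proposal is correct and follows essentially the same route as the paper: run the MMP for $(X,B)$, propagate $\ge_K(Y,C)$ via Lemma~\ref{preserve}, show the discrepancy divisor $E$ between the resulting minimal model and $(Y,C)$ is effective and exceptional over $Y$ (using terminality and $C=f_*B$), kill it by nefness plus the negativity lemma, and finish with \cite{flops-connect}. The only cosmetic difference is that the paper establishes surjectivity of $f$ in codimension~$1$ as a separate first step, whereas you fold that bookkeeping into the final computation of the support of $E$.
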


\begin{proof}
We first prove that $f$ is surjective in codimension $1$.
Let $g: Z \to X$ and $h: Z \to Y$ be common resolutions.
Suppose that there is a prime divisor $P$ on $Z$ which is mapped to a prime divisor on $Y$ but not on $X$.
We write $g^*(K_X+B) = K_Z+bP+\dots$ and $h^*(K_Y+C) = K_Z+cP+\dots$.
By $g^*(K_X+B) \ge h^*(K_Y+C)$, we have $b \ge c$.
Since $(X,B)$ is terminal, we have $b < 0$.
But $c \ge 0$, a contradiction.

We run a minimal model program for the pair $(X,B)$.
By Lemma~\ref{preserve}, our conditions are preserved.
Since an MMP terminates, we reach to the situation where $K_X+B$ is nef.
Using the same $Z$ etc. as before, we write $g^*(K_X+B) = h^*(K_Y+C) + E$, where $E$ is effective
and $h_*E = 0$, since $C = f_*B$.
If $E \ne 0$, then there is a curve $l$ on $Z$ such that $h(l)$ is a point and $(E,l) < 0$.
But this contradicts the nefness of $K_X+B$.
Therefore we have $g^*(K_X+B) = h^*(K_Y+C)$.
Then the pairs $(X,B)$ and $(Y,C)$ are connected by flops, flips in a generalized sense, by \cite{flops-connect}.
\end{proof}

\section{Varieties of quotient type and associated stacks}\label{quotient}

We explained in the previous section that we should consider not only smooth projective varieties but also
KLT pairs more generally. 
But it is an open question what kind of reasonable \lq\lq derived categories'' should be attached to KLT pairs.
In this section, we will give an answer to this question in the case of pairs of quotient types, a very special case
of KLT pairs. 
It will be justified by Example~\ref{Francia}.

Let $(X,B)$ be a pair of a normal variety and a $\mathbf{Q}$-divisor.
It is said to be of {\em quotient type} if there exists a smooth scheme $U$, which may be reducible, 
with a quasi-finite and surjective morphism $\pi: U \to X$ such that  
$\pi^*(K_X+B)=K_U$.
The ramification of $\pi$ in codimension $1$ is given by $B$.
In particular the coefficients of $B$ are standard, i.e., belong to a set $\{1 - 1/n \mid n \in \mathbf{Z}_{>0}\}$.

Let $R = (U \times_X U)^{\nu}$ be the normalization of the fiber product.
Then $R$ is smooth and there is a natural finite morphism $R \to U \times U$, defining a 
Deligne-Mumford stack $\tilde X$.
There is a natural birational and bijective morphism $\pi_X: \tilde X \to X$.

Since $U$ is smooth, $\tilde X$ is smooth, so that the derived category $D^b(\text{coh}(\tilde X))$ has
finite projective dimension.
The following is our {\em $KD$-hypothesis} in this case:

\begin{Conj} 
Let $(X,B)$ and $(Y,C)$ be pairs of quotient type such that $X$ and $Y$ are projective, and let 
$\tilde X$ and $\tilde Y$ be the associated smooth Deligne-Mumford stacks.

\begin{enumerate}
\item If $(X,B) \sim_K (Y,C)$, i.e., there are projective birational morphisms
$f: Z \to X$ and $g: Z \to Y$ such that $f^*(K_X+B) = g^*(K_Y+C)$, 
then there is an equivalence of triangulated categories
$\Phi: D^b(\text{coh}(\tilde X)) \cong D^b(\text{coh}(\tilde Y))$.

\item If $(X,B) \le_K (Y,C)$, i.e., there are projective birational morphisms
$f: Z \to X$ and $g: Z \to Y$ such that $f^*(K_X+B) \le g^*(K_Y+C)$, 
then there is a fully faithful functor of triangulated categories
$\Phi: D^b(\text{coh}(\tilde X)) \to D^b(\text{coh}(\tilde Y))$.
\end{enumerate}
\end{Conj}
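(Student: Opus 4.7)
The plan is to reduce the conjecture to the DK-hypothesis for the elementary birational maps of the MMP, lifted to the associated smooth Deligne-Mumford stacks $\tilde X$ and $\tilde Y$. Any quotient-type pair is in particular KLT, so by \cite{BCHM} one may run an MMP for $(X,B)$ inside the class of $\mathbf{Q}$-factorial KLT pairs. The first step is to verify that, possibly after replacing each MMP output by its canonical covering stack, every intermediate divisorial contraction or flip lifts to a birational map of smooth DM stacks whose coarse moduli pair is again of quotient type. Technically this amounts to controlling how the boundary coefficients transform under elementary maps so that they remain in the standard set $\{1-1/n \mid n \in \mathbf{Z}_{>0}\}$; this quotient-type bookkeeping is the technical backbone of the whole argument.

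Granted this reduction, I would split the argument into (1) and (2). For (1), in the cases covered by the partial answer to Conjecture~\ref{factorization of K}, namely $K_X+B$ nef or toric, the $K$-equivalence $X \dashrightarrow Y$ decomposes as a composition of flops between $\mathbf{Q}$-factorial terminal pairs; lifted to the stacks this gives a chain of stacky flops, and the conjecture reduces to DK for a single flop between smooth DM stacks. For (2), I would invoke the partial factorization result after Conjecture~\ref{factorization of K2} (under the hypotheses $C = f_* B$, $K_Y+C$ nef, and MMP termination) to decompose $f$ into divisorial contractions, strict flips and flops; each such elementary piece should then contribute a fully faithful embedding on derived categories of the stacks, and composition delivers the desired $\Phi$.

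For each elementary stacky map I would use the three strategies indicated in Section~5 of the paper. A divisorial contraction $\tilde X' \to \tilde X$ of smooth DM stacks should produce a fully faithful stacky pull-back $L\pi^*$ by the same argument as in the example for smooth schemes in Section~1, because the projection formula and the identity $R\pi_* L\pi^* \cong \mathrm{id}$ survive on smooth proper DM stacks. For a strict flip, I would construct a Fourier-Mukai kernel on the stacky fibre product via a stack-theoretic VGIT argument, exploiting the fact that small contractions of KLT pairs are locally modelled by GIT quotients. For a single flop I would try to adapt either the moduli-of-perverse-point-sheaves construction or the tilting-generator construction to smooth DM stacks, guided by the toric/toroidal stacky flop case already settled in \cite{log-crep,toric,toricII,toricIII}, which furnishes a working template.

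The principal obstacle is the flop step. Even for smooth schemes the derived equivalence for an arbitrary flop is open in full generality, so any unconditional proof in this stacky setting must subsume that problem. A secondary but still substantial difficulty is persistence of the quotient-type condition through the MMP: the naive output of a divisorial contraction or flip on a quotient-type pair is not guaranteed to be of quotient type, and I expect one must work systematically with canonical stacks and root stacks at each stage and verify that the induced boundary coefficients remain standard, so that $\tilde X_i$ is well-defined at every intermediate step.
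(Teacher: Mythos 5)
The statement you were asked to prove is stated in the paper as a \emph{conjecture} (the $KD$-hypothesis for pairs of quotient type), and the paper offers no proof of it; it offers only supporting evidence: the Francia flop example, the partial converse via Orlov's representability theorem for stacks, and a complete solution in the toric and toroidal cases. So there is no proof in the paper against which to measure your argument, and your text is, accordingly, a program rather than a proof. As a program it matches the paper's own roadmap quite faithfully: reduce to elementary maps via the MMP (\S 3), handle each elementary map by one of the moduli/VGIT/tilting methods (\S 5), and lean on the toric/toroidal results (\S 8) as the template. Your identification of the two fatal gaps is also the correct one and coincides with the paper's own assessment.

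To be concrete about why this cannot currently be closed into a proof: first, the derived equivalence for a single flop between smooth varieties (let alone smooth Deligne--Mumford stacks) is open in general, and every known method (moduli of perverse point sheaves, VGIT windows, tilting generators) works only under additional hypotheses --- low dimension, group actions, or explicit combinatorial structure. Second, the factorization inputs you invoke are themselves conjectural in the generality you need: Conjecture~\ref{factorization of K} is only known when $K_X+B$ is nef or in the toric case, and Conjecture~\ref{factorization of K2} is only settled under the extra hypotheses you list, so the reduction to elementary maps is not available unconditionally. Third, as you note, the MMP starting from a quotient-type pair is not known to stay within the class of quotient-type pairs; the paper explicitly says it does not know how to treat the more complicated singularities that can appear, and without this the intermediate stacks $\tilde X_i$ are not even defined. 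None of this is a criticism of your strategy --- it is the intended strategy --- but you should present it as a reduction of one open problem to several others, not as a proof.
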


The conjecture above is generalized so that the DK hypothesis for varieties as before and the derived 
McKay correspondence for quotient singularities are both included (see \S \ref{GL3}).

This definition of the derived category of $(X,B)$ is justified by the following example
(this is a special case of toric flops explained in \S 8):

\begin{Expl}[Francia flop \cite{Francia}]\label{Francia}
Let $X$ be a smooth $4$-fold which is the total space of a vector bundle
$\mathcal{O}_{\mathbf{P}^2}(-1) \oplus \mathcal{O}_{\mathbf{P}^2}(-2)$ over $\mathbf{P}^2$.
The zero section $E \cong \mathbf{P}^2 \subset X$ can be contracted 
by a projective birational morphism $f: X \to Y$ to an isolated singular point $Q \in Y$ 
of a normal Gorenstein affine variety.
$f$ is a small contraction such that $K_X = f^*K_Y$.
There is a flop $f^+: X^+ \to Y$ of $f$, whose the exceptional locus $E^+ = f^+{}^{-1}(Q)$ is 
isomorphic to $\mathbf{P}^1$.

$X^+$ has one isolated singularity $P \in E^+$, which is a quotient singularity of type 
$\frac 12(1,1,1,1)$, i.e., the singularity of a quotient $\mathbf{C}^4/\mathbf{Z}_2$ by the involution $x \mapsto -x$.
Let $\tilde X^+$ be the smooth Deligne-Mumford stack associated to $(X^+, 0)$.
Then there is an equivalence $D^b(\text{coh}(X)) \cong D^b(\text{coh}(\tilde X^+))$.
But the derived categories $D^b(\text{coh}(X))$ and $D^b(\text{coh}(X^+))$ are very different.
For example, for the point object $\mathcal{O}_P \in D^b(\text{coh}(X^+))$, 
we have $\text{Hom}(\mathcal{O}_P,\mathcal{O}_P[k]) \ne 0$ for all $k \ge 0$, but 
there is no such object in $D^b(\text{coh}(X))$.

This example can be regarded as a mixture of a smooth flop with a McKay correspondence 
(without a crepant resolution).
\end{Expl}

Orlov's representability theorem is generalized to the case of varieties of quotient types in \cite{equi}.
Hence we have the following consequence similarly to Theorem~\ref{converse1}:

\begin{Thm}
Let $(X,B)$ and $(Y,B)$ be pairs of quotient types such that $X$ and $Y$ are projective varieties.
Let $\tilde X$ and $\tilde Y$ be the associated smooth Deligne-Mumford stacks.
Assume that there is an equivalence of triangulated categories 
$\Phi: D^b(\text{coh}(\tilde X)) \cong D^b(\text{coh}(\tilde Y))$.
Assume in addition that $\kappa(X,K_X+B) = \dim X$ or $\kappa(X,-K_X-B) = \dim X$.
Then there exists a projective scheme $Z$ and birational morphisms
$f: Z \to X$ and $g: Z \to Y$ such that $f^*(K_X+B) = g^*(K_Y+C)$.
\end{Thm}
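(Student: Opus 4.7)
The plan is to adapt the proof of Theorem~\ref{converse1} to the stacky setting, using the generalized Orlov representability theorem of \cite{equi} in place of \cite{Orlov2}, and exploiting the defining relation $\pi_X^*(K_X+B) = K_{\tilde X}$ (and similarly for $Y$) to transport the conclusion from the stacks $\tilde X, \tilde Y$ back to the coarse moduli $X, Y$. The overall structure is: represent $\Phi$ by a kernel, use the intrinsic Serre functor to extract a relation between $\omega_{\tilde X}$ and $\omega_{\tilde Y}$ along the support of the kernel, and then pick out an appropriate irreducible component to serve as $Z$.

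First I would invoke the stacky version of Orlov's theorem to obtain $e \in D^b(\text{coh}(\tilde X \times \tilde Y))$ with $\Phi(\bullet) \cong p_{2*}(p_1^*(\bullet) \otimes^L e)$. Since $\tilde X$ and $\tilde Y$ are smooth Deligne-Mumford stacks with projective coarse moduli, each carries a Serre functor given by tensoring with $\omega_{\tilde X}[\dim \tilde X]$ (resp. $\omega_{\tilde Y}[\dim \tilde Y]$), and the intrinsic nature of the Serre functor gives $\Phi \circ S_{\tilde X} \cong S_{\tilde Y} \circ \Phi$. Translated to the kernel, and using $\dim \tilde X = \dim \tilde Y$ (forced by the existence of $\Phi$), this yields an isomorphism
\[
p_1^* \omega_{\tilde X} \otimes e \;\cong\; p_2^* \omega_{\tilde Y} \otimes e
\]
on $\tilde X \times \tilde Y$.

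Next I would pick out an irreducible component $\tilde Z$ of $\text{supp}(e)$ that dominates $\tilde X$ under $p_1$; such a component exists because $\Phi$, being fully faithful, cannot annihilate point objects on $\tilde X$. Restricting the Serre-compatibility isomorphism to (the normalization of) $\tilde Z$ and dividing out by the multiplicity of $e$ along $\tilde Z$ produces a line-bundle isomorphism $(p_1|_{\tilde Z})^* \omega_{\tilde X} \cong (p_2|_{\tilde Z})^* \omega_{\tilde Y}$. Here the Kodaira dimension hypothesis is decisive: if $\kappa(X, K_X+B) = \dim X$, then $\omega_{\tilde X}$ is big, so its pullback to $\tilde Z$ is big; the displayed isomorphism then forces $p_2|_{\tilde Z}$ to be dominant as well, for otherwise the right-hand side would factor through a lower-dimensional image and fail to be big. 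The case $\kappa(X,-K_X-B) = \dim X$ is handled symmetrically by considering $-\omega_{\tilde X}$. Thus $\tilde Z$ dominates both factors birationally.

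Finally I would take $Z$ to be a projective resolution of the coarse moduli space of $\tilde Z$, with induced birational morphisms $f: Z \to X$ and $g: Z \to Y$. Pulling the line-bundle isomorphism on $\tilde Z$ back through the natural map $Z \to \tilde Z$ (an isomorphism in codimension one over the locus of trivial stabilizers, with the ramification absorbed precisely into $B$ and $C$ by the quotient-type condition $\pi^*(K+B) = K_{\tilde{\phantom{X}}}$), one obtains the desired equality $f^*(K_X+B) = g^*(K_Y+C)$. The main obstacle I expect is the step where the Serre-compatibility isomorphism is restricted to a single component $\tilde Z$ of the support: one has to control the multiplicity of the complex $e$ along $\tilde Z$ and justify that the restriction really produces an honest isomorphism of line bundles (not merely a relation up to twist by a torsion sheaf), and then verify that descent through the coarse-moduli map correctly converts the stacky canonical into $K+B$. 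Both points rely heavily on the quotient-type hypothesis, and represent the genuine new content beyond the scheme-theoretic proof of Theorem~\ref{converse1}.
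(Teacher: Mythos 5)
Your proposal follows essentially the same route as the paper, which proves this theorem by simply transporting the argument of Theorem~\ref{converse1} to the stacky setting: the generalized Orlov representability theorem of \cite{equi} produces the kernel $e$, the intrinsic Serre functors give $p_1^*\omega_{\tilde X}\otimes e\cong p_2^*\omega_{\tilde Y}\otimes e$, and $Z$ is obtained from an irreducible component of the support of $e$ dominating $X$, with the Kodaira dimension hypothesis and the relation $\pi^*(K+B)=K_{\tilde{\phantom{X}}}$ handling the descent to the coarse moduli spaces. The technical points you flag (domination of the second factor via bigness, descent of the canonical bundle relation) are exactly the ones the paper delegates to \cite{DK} and \cite{equi}, so your writeup is a correct and somewhat more detailed rendering of the intended proof.
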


\section{Major approaches}\label{approach}

We review major approaches found so far toward DK hypothesis in the case of flops
between smooth varieties.
The mathematics in each of them is quite interesting in its own right.

Let $X$ and $Y$ be smooth varieties with projective birational morphisms $f: X \to Z$ and $g: Y \to Z$
which are small and such that the relative Picard numbers $\rho(X/Z) = \rho(Y/Z) = 1$ and that $K_X \sim_K K_Y$. 
We would like to prove that there is an equivalence $\Phi: D^b(\text{coh}(X)) \to D^b(\text{coh}(Y))$.
There are several general approaches to the conjecture:

\begin{enumerate}

\item (moduli) We represent $X$ as a certain moduli space of objects $M_x \in D^b(\text{coh}(Y))$
for $x \in X$, and use this structure to prove the conjecture.

\item (VGIT) We represent $X$ and $Y$ as different geometric quotients $V//_-G$ and $V//_+G$ 
of the same geometric invariant theory (GIT) problem,  
find a full subcategory $\mathcal{W}$ of the big quotient $D^b(\text{coh}([V/G]))$, called a {\em window}, 
and prove that the natural projections from $\mathcal{W}$ 
to $D^b(\text{coh}(X))$ and $D^b(\text{coh}(Y))$ are equivalences.

\item (tilting) We construct {\em tilting generators} $M \in D^b(\text{coh}(X))$ and $N \in D^b(\text{coh}(Y))$
whose endomorphism rings are isomorphic $\text{End}(M) \cong \text{End}(N) \cong R$.
Then 
\[
D^b(\text{coh}(X)) \cong D^b(\text{mod-}R) \cong D^b(\text{coh}(Y)).
\]

\end{enumerate}

(1) {\em moduli method}. 
This approach is based on the Orlov representability theorem (Theorem~\ref{Orlov}) stating that 
$\Phi(\bullet) \cong p_{2*}(p_1^*(\bullet) \otimes^L e)$.
$X$ is regarded as a moduli space of objects 
$\Phi(\mathcal{O}_x) = M_x = p_{2*}(\mathcal{O}_{x \times Y} \otimes^L e) \in D^b(\text{coh}(Y))$, 
with the kernel $e$ being the universal family. 
In this way, $X$ becomes naturally a stack.

This method gives the most accurate description of the change of categories,
but it seems to work only in the case where we know the situation very well,
e.g., in dimension 2 or 3.
Since the Fourier-Mukai kernel $e$ is given as a universal object, 
the equivalence is naturally global.

The method is discovered in \cite{Bridgeland2} and \cite{BKR}, and extended in \cite{Chen}.
In \cite{Bridgeland2}, we define an abelian category of {\em perverse coherent sheaves} 
by gluing the abelian category of coherent sheaves on the contracted space $Z = f(X)$
with the heart of the category of acyclic objects
$\mathcal{C} = \{c \in D^b(\text{coh}(X)) \mid f_*c = 0\}$ with shift by $-1$.
A {\em perverse point sheaf} is defined to be a quotient of $\mathcal{O}_X$ in this abelian category 
which is numerically equivalent to 
the ordinary point sheaf $\mathcal{O}_x$.
Then $Y$ is the moduli space of such perverse point sheaves.

Similarly in \cite{BKR}, a crepant resolution of a quotient singularity $X = \mathbf{C}^3/G$ by a finite subgroup 
$G \subset SL(3,\mathbf{C})$
is constructed as a moduli space $Y$, called a {\em $G$-Hilbert scheme}, which classifies {\em $G$-clusters}, 
those subschemes of $\mathbf{C}^3$ which are finite dimensional as $G$-modules and 
numerically equivalent to the structure sheaves of free $G$-orbits.
Then $D^b(\text{coh}(Y)) \cong D^b(\text{coh}(\tilde X))$, where 
$\tilde X = [\mathbf{C}^3/G]$ is the Deligne-Mumford stack associated to the quotient space $X$ 
(see \S \ref{quotient}).

\cite{Toda-MMP2} and \cite{Toda-MMP3}
exhibited the MMP for surfaces and the first step for 3-folds as moduli spaces of stable objects
with respect to some stability conditions.
The wall crossings of stability conditions in this case correspond to the change of birational models
with decreasing canonical divisors

\vskip 1pc

The following definition is similar to that of the tilting generator below:

\begin{Defn}
A class of objects $\Omega \subset D^b(\text{coh}(X))$ is said to be a {\em spanning class}
if the following conditions are satisfied:

\begin{enumerate}
\item For an object $a \in D^b(\text{coh}(X))$, $\text{Hom}(\omega,a[p]) = 0$ 
for all $\omega \in \Omega$ and for all $p \in \mathbf{Z}$
implies that $a \cong 0$.

\item For an object $a \in D^b(\text{coh}(X))$, $\text{Hom}(a,\omega[p]) = 0$ 
for all $\omega \in \Omega$ and for all $p \in \mathbf{Z}$
implies that $a \cong 0$.
\end{enumerate}
\end{Defn}

\begin{Thm}[\cite{Bridgeland1}]
Let $F: \mathcal{A} \to \mathcal{B}$ be an exact functor of triangulated categories which have Serre functors.

(1) Assume that $F: \text{Hom}(\omega, \omega'[p]) \to \text{Hom}(F(\omega), F(\omega')[p])$ are 
bijective for all $\omega,\omega' \in \Omega$ and for all $p \in \mathbf{Z}$.
Then $F$ is fully faithful.

(2) Assume that $F$ is fully faithful, and that $S_{\mathcal{B}} \circ F \cong F \circ S_{\mathcal{A}}$
for Serre functors.
Then $F$ is an equivalence.
\end{Thm}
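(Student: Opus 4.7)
The plan is to exploit the Serre functors on both sides: they make $\mathcal{A}$ and $\mathcal{B}$ saturated, so $F$ automatically admits a left adjoint $H$ and a right adjoint $G$, and the adjunction long exact sequences combined with the spanning property of $\Omega$ will propagate the hypothesis from $\Omega$ to all of $\mathcal{A}$.

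For Part~(1), I would work with the counit $\epsilon': HF \to \text{id}_{\mathcal{A}}$ and the unit $\eta: \text{id}_{\mathcal{A}} \to GF$ in tandem. First, for $\omega \in \Omega$, applying $\text{Hom}(-, \omega'[p])$ to the cone triangle of $\epsilon'_\omega$ and using the adjunction $H \dashv F$ together with the triangle identity identifies the map $\text{Hom}(\omega, \omega'[p]) \to \text{Hom}(HF(\omega), \omega'[p])$ with the hypothesis map $F$, which is bijective. The long exact sequence gives $\text{Hom}(\text{cone}(\epsilon'_\omega), \omega'[p]) = 0$ for all $\omega' \in \Omega$ and all $p$, and spanning condition (1) forces $\epsilon'_\omega$ to be an isomorphism. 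Next, for an arbitrary $a \in \mathcal{A}$, I apply $\text{Hom}(\omega, -)$ to the cone triangle of $\eta_a$. The adjunction $F \dashv G$ rewrites $\text{Hom}(\omega, GF(a)[p]) \cong \text{Hom}(F(\omega), F(a)[p])$, and the isomorphism $HF(\omega) \cong \omega$ combined with $H \dashv F$ rewrites $\text{Hom}(\omega, a[p]) \cong \text{Hom}(F(\omega), F(a)[p])$; a triangle-identity check shows $(\eta_a)_*$ implements this identification, so the cone of $\eta_a$ is right-orthogonal to $\Omega$ and vanishes by spanning condition (1). Thus $\eta$ is a natural isomorphism and $F$ is fully faithful.

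For Part~(2), full-faithfulness together with saturation yields a semi-orthogonal decomposition $\mathcal{B} = \langle \mathcal{C}, F(\mathcal{A}) \rangle$ with $\mathcal{C} = F(\mathcal{A})^{\perp}$. The Serre-compatibility $S_{\mathcal{B}} F \cong F S_{\mathcal{A}}$ shows $F(\mathcal{A})$ is stable under $S_{\mathcal{B}}$, and the Serre duality chain $\text{Hom}(F(a), c) \cong \text{Hom}(c, S_{\mathcal{B}}(F(a)))^* \cong \text{Hom}(c, F(S_{\mathcal{A}}(a)))^*$, combined with the fact that $S_{\mathcal{A}}$ is an auto-equivalence of $\mathcal{A}$, turns the right-orthogonality $\text{Hom}(F(\mathcal{A}), c) = 0$ into left-orthogonality $\text{Hom}(c, F(\mathcal{A})) = 0$. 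Hence $\mathcal{B}$ splits as an orthogonal direct sum $F(\mathcal{A}) \oplus \mathcal{C}$ of triangulated categories; invoking indecomposability of $\mathcal{B}$ (which holds in the intended geometric setting where $\mathcal{B} = D^b(\text{coh}(Y))$ for a connected $Y$) together with $F(\mathcal{A}) \neq 0$ forces $\mathcal{C} = 0$, so $F$ is essentially surjective.

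The main obstacle is the bootstrap step in Part~(1): the set of $a \in \mathcal{A}$ on which $\eta_a$ is an isomorphism is a thick triangulated subcategory containing $\Omega$, but a spanning class need not generate the category as a thick subcategory, so a direct subcategory argument fails. The device that makes it work is to use both adjoints simultaneously: the identity $HF(\omega) \cong \omega$ established on $\Omega$ supplies, via $H \dashv F$, the crucial identification of $\text{Hom}(\omega, a[p])$ with $\text{Hom}(F(\omega), F(a)[p])$ for arbitrary $a$, converting the $\Omega$-hypothesis into vanishing of $\text{Hom}(\omega, \text{cone}(\eta_a)[p])$ and letting the spanning condition finish the job.
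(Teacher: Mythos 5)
The paper states this theorem without proof, citing Bridgeland, so there is no internal proof to compare against; your argument is essentially Bridgeland's original one and its structure is sound: show the counit $\epsilon'_\omega: HF(\omega) \to \omega$ of the left adjunction is an isomorphism for $\omega$ in the spanning class, use that together with $H \dashv F$ to see that $F: \text{Hom}(\omega, a[p]) \to \text{Hom}(F\omega, Fa[p])$ is bijective for \emph{arbitrary} $a$, and then kill the cone of the unit $\eta_a: a \to GF(a)$. You also correctly isolate the one genuinely delicate point (a spanning class need not generate, so the two-adjoint bootstrap is essential) and correctly observe that part (2) as literally stated needs indecomposability of $\mathcal{B}$ --- it fails for $\mathcal{B} = \mathcal{A} \oplus \mathcal{A}$ with $F$ the inclusion of a factor --- a hypothesis present in Bridgeland's paper and satisfied for $D^b(\text{coh}(Y))$ with $Y$ connected.

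Two corrections are needed. First, the existence of Serre functors does \emph{not} make the categories saturated or give $F$ adjoints for free; Bridgeland assumes the adjoints exist (in the geometric setting they do by Bondal--Van den Bergh), and the actual role of the Serre functors in part (1) is to manufacture the second adjoint from the first, via $H \cong S_{\mathcal{A}}^{-1} \circ G \circ S_{\mathcal{B}}$ in your notation. You should state this rather than derive both adjoints from saturation. Second, in your first step the vanishing $\text{Hom}(\text{cone}(\epsilon'_\omega), \omega'[p]) = 0$ concerns morphisms \emph{into} $\Omega$, so it is spanning condition (2), not condition (1), that forces the cone to vanish; condition (1) is the one you correctly use later for the cone of $\eta_a$. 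Both halves of the definition of a spanning class are genuinely needed, one in each step, so the labels are worth getting right.
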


For example, the set of all point objects $\Omega = \{\mathcal{O}_x \mid x \in X\}$ for 
a smooth projective variety $X$ is a spanning class of the bounded derived category 
$D^b(\text{coh}(X))$.
As a consequence, we can check the fully-faithfulness of an exact functor \lq\lq pointwise'', 
hence analytic locally. 

Therefore, when $X$ is the moduli space of objects $M_x \in D^b(\text{coh}(Y))$, the key point is to 
prove that the natural homomorphisms
\[
\text{Hom}(\mathcal{O}_x,\mathcal{O}_{x'}[p]) \cong \text{Hom}(M_x,M_{x'}[p])
\]
are bijective for all $x,x'$ and $p$, in order to prove the derived equivalence
$D^b(\text{coh}(X)) \cong D^b(\text{coh}(Y))$.

Since the Serre functor is intrinsically determined 
for a triangulated category, the equivalence of derived categories 
implies the compatibility of the Serre functors.
The second assertion of the above theorem is a converse.

\vskip 1pc 

(2) {\em VGIT method}.
This approach of using the variation of geometric invariant theory (VGIT) was discovered in a physics paper \cite{Hori}.
\cite{DS} treated {\em stratified Mukai flops} of type $A$, flops between different Springer resolutions
yielding the cotangent bundles of dual Grassmannian varieties.
\cite{BFK} and \cite{HL} developed more general theory.
\cite{BFK} treats also the {\em Landau-Ginzburg models} which have additional structure to the varieties.

If there is an algebraic group $G$ acting on the whole situation $V$, then there is a possibility of using this method.
We represent $X$ and $Y$ as different geometric quotients $V//_-G$ and $V//_+G$ 
of the same GIT problem.
We find a full subcategory $\mathcal{W}$ of the derived category of the big quotient stack $D^b(\text{coh}([V/G]))$, 
called a {\em window}.
This is determined by restricting the weights along a Kirwan-Ness stratum of the unstable locus.
Then we prove that the natural functor from $D^b(\text{coh}([V/G]))$ 
to $D^b(\text{coh}(X))$ and $D^b(\text{coh}(Y))$ induce equivalences from $\mathcal{W}$.

The method works well along with the representation theory of algebraic groups.
But even in the case of stratified Mukai flops of type $B$, singularities appear in this construction, 
and similar arguments do not work,  
because the general theory requires smoothness of the whole situation. 

Any change of birational models can be regarded as a VGIT (\cite{Thaddeus}, \cite{Dolgachev-Hu}),
e.g., weak factorization theorem in \S \ref{weak fact}.
We have smoothness in this case, but there are too many Kirwan-Ness strata corresponding to the up-and-down
of the canonical divisor. 

\vskip 1pc

(3) {\em tilting method}.
This method was developed by
Van den Bergh \cite{VdBergh}, where 
an alternative proof of \cite{Bridgeland2} on 3-fold flops is given
in more general situation.
The equivalence of the derived categories of two varieties are proved 
through the third derived category of a non-commutative associative algebra.
Namely, we find tilting generators $M$ and $N$ (defined below) on $X$ and $Y$, respectively, 
whose endomorphism rings $R$ are isomorphic: $\text{End}(M) \cong \text{End}(N) \cong R$.
Then we have derived equivalences 
\[
D^b(\text{coh}(X)) \cong D^b(\text{mod-}R) \cong D^b(\text{coh}(Y)).
\]
In this sense we leave the world of commutative algebras and schemes.
This method looks working most generally, and provides the easiest way 
to prove the equivalence of triangulated categories.
\cite{Kaledin} and \cite{Segal} use this method. 

The kernel of the equivalence is not clearly given by this method.
Therefore the globalization, i.e., gluing the derived equivalences given locally over $Z$ to the whole situation, 
is not immediate.

The non-commutative algebra obtained in the proof is also related to the geometric structure of the flop
which was not captured by traditional method, 
e.g., non-commutative deformations of the exceptional curve (\cite{DW}).

\begin{Defn}
An object $M \in D^b(\text{coh}(X))$ is said to be a {\em tilting generator}
if the following conditions are satisfied:

\begin{enumerate}
\item For an object $a \in D^b(\text{coh}(X))$, $\text{Hom}(M,a[p]) = 0$ for all $p \in \mathbf{Z}$
implies that $a \cong 0$.

\item $\text{Hom}(M,M[p]) \cong 0$ for all $p \ne 0$. 
\end{enumerate}
\end{Defn}

By using the tilting generator, we can connect different worlds of varieties and non-commutative rings.

\begin{Thm}[\cite{Bondal}, \cite{Rickard}]\label{Rickard}
Let $R = \text{End}(M)$.
Then $D^b(\text{coh}(X)) \cong D^b(\text{mod-}R)$.
\end{Thm}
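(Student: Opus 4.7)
The plan is to exhibit the equivalence as an adjoint pair built from $M$. Define the functor
\[
F = R\text{Hom}(M, -) \colon D^b(\text{coh}(X)) \to D(\text{mod-}R),
\]
where the right $R$-action comes from composition on the first variable, together with its left adjoint
\[
G = (-) \otimes^L_R M \colon D(\text{mod-}R) \to D(\text{QCoh}(X)).
\]
First I would check that these functors restrict to the bounded derived categories of coherent sheaves and finitely generated $R$-modules. Because $X$ is smooth projective, $M$ is perfect, so $F$ has bounded cohomological amplitude and sends $D^b(\text{coh}(X))$ into $D^b(\text{mod-}R)$; perfectness of $M$ likewise ensures $G$ maps $D^b(\text{mod-}R)$ into $D^b(\text{coh}(X))$.

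Next I would verify the equivalence on a generator. The second tilting condition gives
\[
F(M) = R\text{Hom}(M,M) \cong \text{Hom}(M,M) = R,
\]
and correspondingly $G(R) = R \otimes^L_R M \cong M$. Hence both the unit $\text{id} \to FG$ and the counit $GF \to \text{id}$ of the adjunction are isomorphisms when evaluated on $R$, respectively on $M$. Since $R$ generates $D^b(\text{mod-}R)$ as a triangulated category under shifts, cones, and direct summands, a standard dévissage argument (both $F$ and $G$ are exact between triangulated categories) propagates this: the counit is an isomorphism on all of $D^b(\text{mod-}R)$.

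It then remains to show that the unit $\eta_a \colon a \to GF(a)$ is an isomorphism for every $a \in D^b(\text{coh}(X))$. Complete $\eta_a$ to a distinguished triangle $a \to GF(a) \to c \to a[1]$ and apply $F$; by the counit isomorphism just proved, $F(c) \cong 0$, meaning $\text{Hom}(M, c[p]) = 0$ for every $p \in \mathbf{Z}$. Condition (1) in the definition of a tilting generator then forces $c \cong 0$, giving $a \xrightarrow{\sim} GF(a)$ and hence the asserted equivalence.

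The main obstacle, and the point where smoothness of $X$ is used most essentially, is the compatibility of $F$ and $G$ with boundedness and finite generation: one must know that $R = \text{End}(M)$ has finite global dimension, that $M$ is perfect over $\mathcal{O}_X$, and that consequently every object of $D^b(\text{mod-}R)$ is built from $R$ by finitely many triangles. Without such control, the generator-propagation step in the second paragraph would only produce an equivalence of unbounded derived categories, and the identification of $D^b(\text{coh}(X))$ with $D^b(\text{mod-}R)$ would fail.
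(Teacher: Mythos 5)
The paper gives no proof of this theorem --- it is quoted from Bondal and Rickard --- so there is no internal argument to compare against. Your proposal is the standard tilting-theory proof and its architecture is sound: form the adjoint pair $(-)\otimes^L_R M \dashv R\text{Hom}(M,-)$, check the adjunction morphisms on the generators $R$ and $M$ using condition (2), propagate by d\'evissage over $D^b(\text{mod-}R)$, and kill the cone of the remaining adjunction morphism using condition (1). One small directional slip: since $(-)\otimes^L_R M$ is the \emph{left} adjoint, the map you propagate on the module side is the unit $N \to FG(N)$, while the canonical map on the sheaf side is the counit $\epsilon_a: GF(a) \to a$ (the evaluation $R\text{Hom}(M,a)\otimes^L_R M \to a$), not a map $a \to GF(a)$. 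The triangle argument survives once the arrows are reversed: the triangle identity $F(\epsilon_a)\circ\eta_{F(a)} = \text{id}$ together with the unit being an isomorphism on $F(a)$ shows $F$ kills the cone of $\epsilon_a$, and condition (1) finishes.

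The genuine gap is the step you flag but do not fill: that $R$ is right Noetherian of finite global dimension, so that $R$ generates $D^b(\text{mod-}R)$ as a thick subcategory and $G$ preserves boundedness and coherence. This is the only nontrivial input and it does not follow formally from smoothness of $X$; asserting it leaves the proof circular at exactly the point where it could fail (for a non-tilting $M$, or for singular $X$, the thick subcategory generated by $R$ is only $\text{Perf}(R) \subsetneq D^b(\text{mod-}R)$). Note also that in the intended application $X$ is projective over an affine base $Z$ rather than projective, so $R$ is module-finite over $H^0(\mathcal{O}_Z)$ but not finite-dimensional over $k$. The usual way to close the gap is to first establish the unbounded equivalence $D(\text{Mod-}R) \simeq D(\text{QCoh}(X))$ (as $M$ is a compact generator without self-extensions), restrict to compact objects to get $\text{Perf}(R) \simeq \text{Perf}(X) = D^b(\text{coh}(X))$ by smoothness, and then bound $\text{Ext}^n_R(S,T)$ for finitely generated $S,T$ by transporting them through $G$; only after that does $\text{Perf}(R) = D^b(\text{mod-}R)$, which is the equality your d\'evissage quietly assumes.
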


In the case of a $3$-fold flopping contraction $f: X \to Z$, a tilting generator $M \in D^b(\text{coh}(X))$ 
is constructed as a universal extension of lines bundles which generate the relative Picard group.
Then $R = \text{End}(M)$ is a non-commutative algebra over $Z$.
The point of the proof is to show that another algebra over $Z$ constructed from the other side $g: Y \to Z$ is
isomorphic to $R$. 

\vskip 1pc

There are other approaches in special cases.
In particular, the approach by Cautis \cite{Cautis} is interesting, 
which gives a proof of equivalence in the case of stratified Mukai flops.
This approach is related to the VGIT approach, but gives more precise information on the kernels.
The proof gives the kernel as a convolution of a complex of objects.
The proof looks like a kind of derived version of projective geometry. 
There is also \cite{Kuznetsov-IMOU}.

We shall explain the case of toroidal birational morphisms in \S \ref{toric},
where we shall even prove the full $K$-inequality conjecture by explicit combinatorial method.
In the toric case, the proof is given by tilting generators.
But the kernel is given by the fiber product. 
Therefore the functors glue globally, so that the statement is 
generalized to the toroidal case.
In order to apply the results to the McKay correspondence for $GL(3)$, we need to use the toroidal version.

\section{Weak factorization of a birational map}\label{weak fact}

We consider a factorization of a birational map between smooth projective varieties into blow-ups and blow-downs
with smooth centers in this section.
The advantage of this weak factorization is that all varieties appearing in the intermediate steps 
are smooth.
On the other hand, the canonical divisor changes up and down without fixed direction.

Since any birational map $f: X \dashrightarrow Y$ is factorized 
into two birational morphisms, i.e.. there is a third smooth projective variety $Z$ with birational morphisms
$g: Z \to X$ and $h: Z \to Y$ such that $f = h \circ g^{-1}$, we only need to factorize birational morphisms.
In dimension $2$, any birational morphism is factorized into a sequence of blow-downs, i.e., in one direction.
But in higher dimensions, we need up and down.

We shall need the logarithmic version of the factorization theorem.
We consider pairs $(X,B)$ instead of varieties $X$.

Let $X$ be a smooth variety and let $\sum B_i$ be a simple normal crossing divisor, i.e., irreducible
components $B_i$ are smooth and cross normally.
A blowing up $f: Y \to X$ along a smooth center $C \subset X$ is said to be {\em permissible} 
for the pair $(X, \sum B_i)$ if $C$ is normal
crossing with $\sum B_i$, i.e. there exist local coordinates $(x_1,\dots,x_n)$ at each point $x \in X$ 
such that $\sum B_i$ and $C$ are expressed as 
$x_1 \dots x_r = 0$ and $x_1 = \dots = x_s = x_{r+1} = \dots = x_{r+t} = 0$ respectively 
for some $r,s,t$ such that $0 \le s \le r$ and $0 \le t$.
We have $\text{codim}(C) = s+t$.
In this case, the union of the set theoretic inverse image 
$f^{-1}(\sum B_i)$ and the exceptional divisor is again a simple normal crossing divisor.

The following theorem was proved using the theory of algebraic Morse theory 
developed in \cite{W-Morse}:

\begin{Thm}[weak factorization theorem \cite{W}, \cite{AKMW}]\label{Morse}
Let $f: Y \to X$ be a birational morphism between smooth projective varieties.
Assume that there are simple normal crossing divisors $B$ and $C$ on $X$ and $Y$, respectively, 
such that $Y \setminus C \cong X \setminus B$ by $f$.
Then there exists a sequence of birational maps $\phi_i: Y_{i-1} \dashrightarrow Y_i$ 
for $i = 1,\dots, n$ with $Y = Y_0$ and $X = Y_n$ such that the following hold:

\begin{enumerate}

\item $f_i:= \phi_n \circ \dots \phi_{i+2} \circ \phi_{i+1}: Y_i \to X$ are morphisms for $0 \le i \le n$, and 
$f = f_0$.

\item The $Y_i$ are smooth projective varieties for all $i$,
$C_i = f_i^{-1}(B)$ is a simple normal crossing divisor on $Y_i$ set theoretically, and 
$Y_i \setminus C_i \cong X \setminus B$ by $f_i$.

\item $\phi_i$ or $\phi_i^{-1}$ is a permissible blowing up along a smooth center contained in $C_i$ or 
$C_{i-1}$, respectively, for each $i$.
\end{enumerate}
\end{Thm}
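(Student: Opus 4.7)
The plan is to follow the Morse-theoretic/VGIT strategy of Wlodarczyk and Abramovich--Karu--Matsuki--Wlodarczyk. First I would reduce to the case treated by birational cobordism: by resolving the graph of $f^{-1}$ and applying equivariant resolution of singularities (keeping $B$ simple normal crossing), it suffices to factor an arbitrary birational morphism $f\colon Y \to X$ of smooth projective varieties that is an isomorphism on the complements of SNC divisors. One may further reduce to the projective (rather than merely proper) setting and, by working locally on $X$ and gluing, to an essentially local birational problem.

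The heart of the argument is to construct a \emph{birational cobordism} $\mathcal{B}$ between $X$ and $Y$: a smooth $(n{+}1)$-dimensional variety carrying a $\mathbf{C}^\ast$-action such that the GIT quotients of the semistable loci at the two extremes of the character lattice recover $Y$ and $X$ and such that the given birational map $f$ is realized by the variation of the quotient. Concretely I would take a $\mathbf{C}^\ast$-equivariant completion of $Y\times \mathbf{A}^1$ modified along $X\times\{0\}$, arrange that the fixed-point components are smooth, and restrict to the nondegenerate locus. As the linearization crosses a wall in the character lattice, the semistable locus changes by removing one stratum of the Bia{\l}ynicki-Birula decomposition and inserting another; each such wall crossing corresponds to passage through a single connected component $F$ of the fixed locus, decomposing the cobordism into a finite sequence of \emph{elementary cobordisms}, one for each fixed component.

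At each elementary wall I would analyze the local picture: near $F$ the action is linearizable, and the two quotients differ by a weighted blow-up along $F_-$ followed by a weighted blow-down onto $F_+$, where $F_\pm$ are the quotients of the ascending/descending Bia{\l}ynicki-Birula cells. The main obstacle is that these are only \emph{weighted} blow-ups, so the centers $F_\pm$ need not be smooth and the transformation need not be a classical blow-up. The crucial technical step is therefore the \emph{torification} procedure: one performs a canonical sequence of equivariant blow-ups of the cobordism along smooth centers, chosen so that in the resulting cobordism every elementary wall crossing becomes a genuine smooth blow-up followed by a smooth blow-down. This uses Hironaka-type canonical resolution in the $\mathbf{C}^\ast$-equivariant category and functoriality to ensure the centers are $\mathbf{C}^\ast$-invariant.

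Finally I would track the boundary throughout: by taking all resolutions and blow-ups to be permissible for the total transform of $B\cup C$ (which is achievable by Hironaka's strong embedded resolution applied equivariantly), each intermediate $Y_i$ acquires an SNC divisor $C_i=f_i^{-1}(B)$ and each $\phi_i^{\pm 1}$ is a permissible blow-up with center inside $C_i$ or $C_{i-1}$. Composing the elementary factorizations from the successive wall crossings of the torified cobordism yields the sequence $\phi_1,\dots,\phi_n$ of the theorem. I expect the torification/canonical resolution step to be by far the hardest piece, since it is what promotes abstract GIT wall crossings (which are naturally only toroidal) to honest smooth blow-ups and blow-downs, and it is also where preservation of the SNC boundary must be checked most carefully.
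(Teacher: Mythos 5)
Your proposal follows essentially the same route as the paper's own sketch: both construct a $\mathbf{G}_m$-equivariant birational cobordism by modifying $X\times\mathbf{P}^1$ (the paper blows up the ideal $(p_1^*I,p_2^*I_0)$ where $f$ is the blow-up of $I$), take an equivariant resolution, and read off the factorization from the variation of GIT quotients along the flow, citing \cite{W} and \cite{AKMW} for the torification step that upgrades the wall crossings to honest smooth blow-ups and blow-downs. Your account is a correct and somewhat more detailed rendering of the same argument.
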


The idea of the proof is as follows.
We express $f$ as a blowing up of an ideal sheaf $I \subset \mathcal{O}_X$ such that 
$\text{codim}(\text{Supp}(\mathcal{O}_X/I)) \ge 2$.
Let $F: W \to V = X \times \mathbf{P}^1$ be a blowing up of an ideal sheaf 
$\tilde I = (p_1^*I, p_2^*I_0) \subset \mathcal{O}_V$, 
where $I_0 \subset \mathcal{O}_{\mathbf{P}^1}$ is the ideal sheaf of $0 \in \mathbf{P}^1$.
An algebraic group $G = \mathbf{G}_m$ acts naturally on $\mathbf{P}^1$, hence on $V$.
$G$ acts on $W$ such that $F$ is $G$-equivariant.
The strict transforms of $X \times \{0\}$ and $X \times \{\infty\}$ by $F$ are isomorphic to $Y$ and $X$, 
respectively, and the variation of GIT quotients determines $f$ (\cite{Thaddeus}, \cite{Dolgachev-Hu}).
There is a suitable $G$-equivariant resolution of singularities $\mu: \tilde W \to W$.
The algebraic Morse theory is the variation of GIT quotients along the flow of the $G$-action on $\tilde W$,
and yields the factorization of $f$ into elementary birational maps $\phi_i$.
Since there is a natural morphism $\tilde W \to X$, 
all intermediate varieties $Y_i$ have morphisms to $X$.

\section{Application to the Grothendieck rings of varieties and categories}\label{K0}

We shall prove that, for $K$-equivalent smooth projective varieties, the classes of their derived categories 
in the Grothendieck ring of categories with $\mathbf{Q}$-coefficients are equal.
This is a weak evidence of the DK hypothesis.

Let $K_0(\text{Var}_k)$ be the Grothendieck ring of varieties over $k$.
It is a commutative ring generated by all isomorphism classes of varieties $X$ defined over $k$
with relations generated by $[X] = [X \setminus Z] + [Z]$ for subvarieties $Z \subset X$, where 
$[X]$ denotes the class of $X$ in $K_0(\text{Var}_k)$.
The product is defined by the direct product $[X] [Y] = [X \times Y]$.
Let $L$ be the class of an affine line, $L = [\mathbf{A}^1]$. 
Then the class of a projective space satisfies an equality 
\[
[\mathbf{P}^b] = \sum_{i = 0}^b L^i = (L^{b+1}-1)/(L-1)
\]
in $K_0(\text{Var}_k)$.

\cite{LaLu} proved that the quotient $K_0(\text{Var})/(L) \cong \mathbf{Z}[SB]$, the ring generated by the 
stable birational classes of varieties. 
\cite{Kuznetsov-Shinder} and \cite{IMOU} independently conjectured that  
$D$-equivalence implies $L$-equivalence: 
if $D^b(\text{coh}(X)) \cong D^b(\text{coh}(Y))$, then $[X] = [Y] \in K_0(\text{Var})[L^{-1}]$.
We note that the method of motivic integration yields an equality in the completion:
$[X] = [Y] \in \lim_{m \to \infty} K_0(\text{Var}/k)[L^{-1}]/\{z \mid \dim z < -m\}$.

Let $(X,B)$ be a pair consisting of a smooth variety $X$ and a divisor with real coefficients 
$B = \sum_{i \in I} b_i B_i$
such that its support $\sum B_i$ is a simple normal crossing divisor and such that $b_i < 1$ for all $i$.
We define a {\em stringy invariant} of the pair
\[
S(X,B) = \sum_{J \subset I} [B_J^o] \prod_{j \in J} (L-1)/(L^{1-b_j}-1)
\]
where $J$ runs all subsets of the set of indexes $I = \{i\}$, and we denote
$B_J = \bigcap_{j \in J} B_j$ and $B_J^o = B_J \setminus \bigcup_{J \subsetneq J'} B_{J'}$ 
following Batyrev \cite{Batyrev}.
We understand that $B_{\emptyset} = X$.
$S(X,B)$ is an element in a ring which is obtained from $K_0(\text{Var}_k)$ 
by attaching some functions of $L$. 
If all $b_i$ are integers, then it is an element in a localized ring $K_0(\text{Var}_k)([\mathbf{P}^n]^{-1} \mid n > 0)$.

\begin{Lem}\label{permissible}
Let $(X,B)$ be a pair consisting of a smooth variety $X$ and a divisor with real coefficients 
$B = \sum_{i \in I} b_i B_i$
such that its support $\sum B_i$ is a simple normal crossing divisor and such that $b_i < 1$ for all $i$.
Let $f: Y \to X$ be a permissible blowing up of the pair $(X,\sum B_i)$ along a smooth center $C$.
Let $B_Y = f^*B - (c - 1)E$ for $c = \text{codim}(C)$ and the exceptional divisor $E$, 
so that $f^*(K_X + B) = K_Y + B_Y$.
Then $S(X,B) = S(Y,B_Y)$.
\end{Lem}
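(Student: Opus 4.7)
The plan is to compare $S(X,B)$ and $S(Y,B_Y)$ stratum by stratum. Set $S = \{i \in I : B_i \supset C\}$; from $f^*(K_X+B) = K_Y + B_Y$ and $K_Y = f^*K_X + (c-1)E$, the coefficient of $E$ in $B_Y$ is $c_E = \sum_{k \in S} b_k - (c-1)$, so $1 - c_E = c - \sum_{k \in S} b_k$. Each stratum $B_J^o$ of $X$ decomposes as $(B_J^o \setminus C) \sqcup (B_J^o \cap C)$. The first piece is carried by $f^{-1}$ to a stratum of $Y \setminus E$ with the same snc type and coefficient factor, so it contributes the same summand to $S(X,B)$ and to $S(Y,B_Y)$. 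The remaining task is to show that, for each $J' \subset I \setminus S$, the $X$-contribution $[C_{J'}^o] \prod_{j \in S \cup J'} y_j$ (writing $y_i := (L-1)/(L^{1-b_i}-1)$ and $C_{J'}^o := B_{S \cup J'}^o \cap C$) equals the total contribution from $Y$-strata inside $E$ lying above $C_{J'}^o$.

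To describe those $Y$-strata, I use that $E \to C$ is a $\mathbb{P}^{c-1}$-bundle on which the strict transform $\tilde B_k$ cuts out a hyperplane $H_k$ for each $k \in S$, with $\{H_k\}_{k \in S}$ in general position on each fiber; for $k \in I \setminus S$ the divisor $\tilde B_k$ meets $E$ in $E|_{B_k \cap C}$, filling the fiber above $p \in C_{J'}^o$ exactly when $k \in J'$ and missing it entirely when $k \notin S \cup J'$. The $Y$-strata in $E$ over $C_{J'}^o$ are therefore indexed by $\{E\} \cup J' \cup S''$ for $S'' \subset S$, each Zariski-locally a product $C_{J'}^o \times H^o_{S''}$, where $H^o_{S''}$ is the type-$S''$ open stratum of the hyperplane arrangement on $\mathbb{P}^{c-1}$. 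Summing these contributions and factoring out $[C_{J'}^o] \prod_{j \in J'} y_j$, the lemma reduces to the fiber-level identity
\[
\frac{L-1}{L^{c-\sum_{k \in S} b_k}-1} \sum_{S'' \subset S}[H^o_{S''}] \prod_{k \in S''} y_k \;=\; \prod_{k \in S} y_k.
\]

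To prove this identity I would apply M\"obius inversion: $[H^o_{S''}] = \sum_{S'' \subset T \subset S}(-1)^{|T \setminus S''|}[\mathbb{P}^{c-1-|T|}]$, so after exchanging sums the left side rewrites as $y_E \sum_{T \subset S}[\mathbb{P}^{c-1-|T|}] \prod_{k \in T}(y_k - 1)$ with $y_E := (L-1)/(L^{1-c_E}-1)$. Substituting $[\mathbb{P}^{c-1-|T|}] = (L^{c-|T|}-1)/(L-1)$ and $y_k - 1 = (L - L^{1-b_k})/(L^{1-b_k}-1)$ converts the claim into a rational identity in $L$ and the $L^{b_k}$, which I would establish by induction on $|S|$: the base $|S|=0$ is $[\mathbb{P}^{c-1}] \cdot (L-1)/(L^c-1) = 1$, and the inductive step peels off a chosen index $k_0 \in S$ by splitting the sum over $T$ according to whether $k_0 \in T$. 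The main obstacle is this algebraic identity, which looks involved but is essentially forced by the crepancy relation $1-c_E = c - \sum_{k \in S} b_k$: in the $|S|=1$ case the numerators collapse cleanly as $L^{c-1}(L^{1-b_1}-1) + (L^{c-1}-1) = L^{c-b_1}-1$, confirming the pattern and giving confidence that the inductive scheme closes.
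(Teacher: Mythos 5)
Your proposal is correct and follows essentially the same route as the paper's proof: decompose each stratum $B_J^o$ into the part away from $C$ and the part over $C$, identify the $Y$-strata inside $E$ over $C\cap B_J^o$ via the $\mathbf{P}^{c-1}$-bundle structure with the strict transforms of the $B_k$, $k\in S$, cutting out a general-position hyperplane arrangement in each fiber, and reduce to the fiber-level identity $y_E\sum_{S''\subset S}[H^o_{S''}]\prod_{k\in S''}y_k=\prod_{k\in S}y_k$, which is exactly the computation in the paper (there the open strata are written explicitly as $[\mathbf{G}_m]^{s-1-\#K}[\mathbf{A}^t]$ and $[\mathbf{P}^{t-1}]$, with the $t=0$ degeneration handled separately, whereas your $[H^o_S]=[\mathbf{P}^{t-1}]=0$ absorbs it automatically). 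The only divergence is how the final rational identity is closed: the paper sums the open strata directly using $\sum_{K\subsetneq S}\prod_{i\in K}\frac{1}{L^{1-b_i}-1}=\prod_{i\in S}\bigl(1+\frac{1}{L^{1-b_i}-1}\bigr)-\prod_{i\in S}\frac{1}{L^{1-b_i}-1}$, while you pass to closed strata by M\"obius inversion and induct on $|S|$; both close correctly (your base case and inductive step do check out against the crepancy relation $1-c_E=c-\sum_{k\in S}b_k$).
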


For example, if $B = 0$ and $B_Y = (1-c)E$, then
$S(X,B) = [X]$ and $S(Y,B_Y) = [X \setminus C] + [E]/[\mathbf{P}^{c-1}] = [X]$, 
because $E$ is a $\mathbf{P}^{c-1}$-bundle over $C$.

\begin{proof}
We may assume that $C$ is contained in $B_i$ for only $i = 1,\dots,s$ and $c = s + t$ for some $s, t \ge 0$.
Then $B_Y = \sum b_iB'_i + (\sum_{i=1}^s b_i - c + 1)B'_0$, where 
$B'_i = f_*^{-1}B_i$ is the strict transform of $B_i$ and $B'_0 = E$. 
Since $c \ge s$, the coefficients of $E$ in $B_Y$ is less than $1$.
We set $B' = \sum_{i \in I'} B'_i$ for $I' = I \cup \{0\}$.

Let $B_J^o$ be any stratum of $X$.
If $B_J^o \cap C = \emptyset$, then $B_J^o \cong (B'_J)^o$.
Then the contributions of $B_J^o$ to $S(X,B)$ and $(B'_J)^o$ to $S(Y,B_Y)$ are equal.

We consider the case $B_J^o \cap C \ne \emptyset$ in the following.
Then it follows that $\{1,\dots,s\} \subset J$.
We set $J = \{1,\dots,s\} \coprod J'$.
If $t \ne 0$, then the strata of $Y$ above $B_J^o$ are $(B'_J)^o$ and the $(B'_{K'})^o$ for
$K' = K \coprod J' \coprod \{0\}$ where $K$ runs all subsets $K \subset \{1,\dots,s\}$.
If $t = 0$, then the same holds except that $K \ne \{1,\dots,s\}$.
We have $[(B'_J)^o] = [B_J^o] - [B_J^o \cap C]$, and 
\[
[(B'_{K'})^o] = \begin{cases}
[B_J^o \cap C] [\mathbf{A}^1 \setminus \{0\}]^{s-1-\# K}[\mathbf{A}^t] \,\, \text{ if } \# K < s \\
[B_J^o \cap C] [\mathbf{P}^{t-1}] \,\, \text{ if } \# K = s. 
\end{cases}
\]
Then the corresponding contribution of the $(B'_{K'})^o$ to $S(Y,B_Y)$ is:
\[
\begin{split}
[B_J^o &\cap C] \prod_{j \in J'} (L-1)/(L^{1-b_j}-1) \times (L-1)/(L^{c-\sum_{i=1}^s b_i} -1) \\
\times (&\sum_{0 \le k < s} \sum_{1 \le i_1 < \dots i_k \le s}  (L-1)^{s-1-k}L^t \prod_{j=1}^k (L-1)/(L^{1-b_{i_j}}-1) \\
&+ (L^t - 1)/(L-1) \prod_{i = 1}^s (L-1)/(L^{1-b_i}-1)) 
\end{split}
\]
where we should ignore the term of the third line if $t = 0$.
The sum of the terms in the second and third lines is equal to: 
\[
\begin{split}
&(L-1)^{s-1} (\sum_{0 \le k < s} \sum_{1 \le i_1 < \dots i_k \le s}  L^t \prod_{j=1}^k 1/(L^{1-b_{i_j}}-1) 
+ (L^t - 1) \prod_{i = 1}^s 1/(L^{1-b_i}-1)) \\
&=(L-1)^{s-1} (L^t (\prod_{i=1}^s (1 + 1/(L^{1-b_i}-1)) - \prod_{i=1}^s 1/(L^{1-b_i}-1))
+ (L^t - 1) \prod_{i = 1}^s 1/(L^{1-b_i}-1)) \\
&=(L-1)^{s-1}(L^{t+s-\sum_{i=1}^s b_i} \prod_{i=1}^s 1/(L^{1-b_i}-1) - \prod_{i = 1}^s 1/(L^{1-b_i}-1)).
\end{split}
\]
Therefore the total contribution of the strata above $B_J^o$ to $S(Y,B_Y)$ is:
\[
\begin{split}
&([B_J^o] - [B_J^o \cap C] + [B_J^o \cap C]) \prod_{j \in J'} (L-1)/(L^{1-b_j}-1) \prod_{i=1}^s (L-1)/(L^{1-b_i}-1) \\
&= [B_J^o] \prod_{j \in J} (L-1)/(L^{1-b_j}-1)
\end{split}
\]
and we conclude the proof.
\end{proof}

\begin{Thm}\label{S(X,B)}
Let $(X,B)$ and $(Y,C)$ be pairs consisting of smooth projective varieties and $\mathbf{R}$-divisors
whose supports are simple normal crossing divisors and whose coefficients are less than $1$.
Assume that there are birational morphisms $f: Z \to X$ and $g: Z \to Y$ from another smooth projective
variety such that $f^*(K_X+B) = g^*(K_Y+C)$.
Then $S(X,B) = S(Y,C)$.
\end{Thm}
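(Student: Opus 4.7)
The natural plan is to reduce the theorem to Lemma~\ref{permissible} via the weak factorization theorem. Set $D = f^*(K_X+B) = g^*(K_Y+C)$ and define a boundary $B_Z$ on $Z$ by $D = K_Z + B_Z$; by hypothesis this single divisor represents both pullbacks simultaneously. It will suffice to prove $S(X,B) = S(Z, B_Z)$, for the symmetric argument applied to $g: Z \to Y$ then gives $S(Y, C) = S(Z, B_Z)$, and the theorem follows.

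To compare $S(X,B)$ with $S(Z, B_Z)$, I would first pass to a higher log resolution if necessary (an operation that only further composes permissible blow-ups and so, by iterated application of Lemma~\ref{permissible}, does not alter $S$) so that the exceptional locus of $f$ together with the preimage of $\mathrm{supp}(B)$ lies in an SNC divisor on $Z$, and so that $f$ is an isomorphism over the complement of an SNC divisor on $X$ containing $\mathrm{supp}(B)$. Then Theorem~\ref{Morse} produces a chain
\[
Z = Y_0 \dashrightarrow Y_1 \dashrightarrow \cdots \dashrightarrow Y_n = X
\]
in which each $\phi_i$ is, in one direction or the other, a permissible blow-up along a smooth center normally crossing the tracked SNC divisor. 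Pulling back $K_X + B$ to each intermediate variety via the induced morphism $f_i: Y_i \to X$ yields boundaries $B_i$ with $B_0 = B_Z$ and $B_n = B$. Applying Lemma~\ref{permissible} at each step then gives $S(Y_{i-1}, B_{i-1}) = S(Y_i, B_i)$, and chaining these equalities proves $S(X,B) = S(Z, B_Z)$.

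The one substantive point to verify along the way is that the hypotheses of Lemma~\ref{permissible} really are inherited from step to step: the SNC property of $\mathrm{supp}(B_i)$ is preserved because each center normally crosses the tracked SNC divisor, and all coefficients of $B_i$ remain strictly less than $1$ because, for a permissible blow-up along a codimension-$c$ center contained in exactly $s \leq c$ existing components with coefficients $b_{j_1},\ldots,b_{j_s} < 1$, the new coefficient on the exceptional divisor is
\[
\sum_{k=1}^{s} b_{j_k} - c + 1 \;<\; s - c + 1 \;\leq\; 1,
\]
and this bound is symmetric in the two directions of $\phi_i$. The main obstacle, then, is essentially bookkeeping: choosing a single SNC divisor on each $Y_i$ that simultaneously tracks $\mathrm{supp}(B_i)$ and is compatible with the combinatorics of Theorem~\ref{Morse}. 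Once this is set up, the strictness of the coefficient bound (rather than merely $\leq 1$) falls out of the permissibility inequality $s \leq c$, and the proof becomes a direct chaining of the single-blow-up invariance established in Lemma~\ref{permissible}.
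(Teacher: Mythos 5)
Your overall strategy is the same as the paper's: reduce to Lemma~\ref{permissible} by factoring everything into permissible blow-ups via the weak factorization theorem~\ref{Morse}, and your coefficient estimate $\sum_{k=1}^{s} b_{j_k} - c + 1 < s - c + 1 \le 1$ is exactly the computation used there. However, one step does not work as you describe it. To invoke Theorem~\ref{Morse} for $f: Z \to X$ you need a simple normal crossing divisor $\bar B$ \emph{on $X$} containing $\text{Supp}(B)$ and the discriminant locus $f(\text{Exc}(f))$, with $f^{-1}(\bar B)$ simple normal crossing on $Z$ and $f$ an isomorphism over $X \setminus \bar B$. Replacing $Z$ by a higher model does nothing toward producing such a divisor on $X$: the locus over which $f$ fails to be an isomorphism is a fixed closed subset of codimension at least $2$ in $X$, and in general it is not contained in any simple normal crossing divisor of the given $X$. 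So ``pass to a higher log resolution'' of $Z$ cannot by itself secure your hypothesis on the base.

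The fix --- and this is where the paper's proof spends most of its effort --- is to modify $X$ and $Y$ as well. One chooses open dense subsets $U \subset X$ and $V \subset Y$ disjoint from the boundaries and identified under the birational map, and uses Hironaka to produce sequences of \emph{permissible} blow-ups $X_l \to \cdots \to X_0 = X$ and $Y_m \to \cdots \to Y_0 = Y$ so that the complements of the preimages of $U$ and $V$ become simple normal crossing divisors on $X' = X_l$ and $Y' = Y_m$. Iterating Lemma~\ref{permissible} along these sequences gives $S(X,B) = S(X',B')$ and $S(Y,C) = S(Y',C')$, and only then is Theorem~\ref{Morse} applied, to a common log resolution $Z'$ of $X'$ and $Y'$, yielding $S(X',B') = S(Z',D') = S(Y',C')$. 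Once this preliminary reduction is inserted, the rest of your argument --- tracking the crepant pullbacks $B_i$ through the zigzag, checking that their supports stay inside the tracked simple normal crossing divisors and that all coefficients remain strictly less than $1$ --- goes through as you wrote it.
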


\begin{proof}
We denote $\bar B = \text{Supp}(B)$ and $\bar C = \text{Supp}(C)$.
We take open dense subsets $U \subset X$ and $V \subset Y$ such that 
$U \cap \bar B = \emptyset = V \cap \bar C$, and 
$U \cong f^{-1}(U) = g^{-1}(V) \cong V$ by $f$ and $g$.
By Hironaka's resolution theorem, there are sequences of birational morphisms
$f_i: X_i \to X_{i-1}$ for $i = 1,\dots,l$ and 
$g_j: Y_j \to Y_{j-1}$ for $j=1,\dots,m$ such that the following conditions are satisfied:

\begin{enumerate}
\item $X_0 = X$ and $Y_0 = Y$.

\item There are simple normal crossing divisors $\bar B^i$ on $X_i$ (resp. $\bar C^j$ on $Y_j$)
such that $\bar B^0 = \bar B$ (resp. $\bar C^0 = \bar C$),  
and such that $\bar B^i$ (resp. $\bar C^j$) is the union of the strict transform $f_{i*}^{-1}\bar B^{i-1}$
(resp. $g_{j*}^{-1}\bar C^{j-1}$) and the exceptional divisor of $f_i$ (resp. $g_j$).

\item $f_i$ (resp. $g_j$) is a blowing up along a smooth center which is permissible for $\bar B^{i-1}$
(resp. $C^{j-1}$).

\item $\bar B^l = X_l \setminus (f_1 \circ \dots \circ f_l)^{-1}(U)$ and 
$\bar C^m = Y_m \setminus (g_1 \circ \dots \circ g_m)^{-1}(V)$.
\end{enumerate}

We note that, if there are codimension $1$ irreducible components 
of the complements $X \setminus \bar B_l$ and $Y \setminus \bar B_m$, 
then some of the $f_i$ and $g_j$ are identities 
which are blowing up along smooth divisors.   

Let $X' = X_l$ and $Y' = Y_m$.
We define $\mathbf{R}$-divisors $B'$ and $C'$ on $X'$ and $Y'$, respectively, 
such that $(f_1 \circ \dots \circ f_l)^*(K_X+B) = K_{X'} + B'$
and $(g_1 \circ \dots \circ g_m)^*(K_Y+C) = K_{Y'}+C'$.
The supports of $B'$ and $C'$ are contained in $\bar B^l$ and $\bar C^m$, respectively, and
their coefficients are less than $1$.

There are birational morphisms $f': Z' \to X'$ and $g': Z' \to Y'$ from another smooth projective
variety such that $(f')^*(K_{X'}+B') = (g')^*(K_{Y'}+C')= K_{Z'} + D'$ for an $\mathbf{R}$-divisor $D'$ 
on $Z'$ whose support is a normal crossing divisor and whose coefficients are less than $1$.
Then we have
\[
S(X,B) = S(X',B') = S(Z',D') = S(Y',C') = S(Y,C)
\]
where the first and the last equalities are consequences of Lemma~\ref{permissible}, and 
the second and the third of Lemma~\ref{permissible} and Theorem~\ref{Morse}.
Thus we proved the theorem.
\end{proof}

By using the motivic integration, we obtain a similar result but in a bigger ring, the completion of 
the localization $K_0(\text{Var}_k)[L^{-1}]$ with respect to the degree (\cite{Batyrev}).

\begin{Cor}
Let $X$ and $Y$ be smooth projective varieties over a field $k$ of characteristic $0$.
Assume that $X$ and $Y$ are $K$-equivalent.
Then $[X] = [Y]$ in $K_0(\text{Var}_k)([\mathbf{P}^n]^{-1} \mid n > 0)$.
\end{Cor}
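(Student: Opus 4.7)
The plan is to deduce this corollary directly from Theorem~\ref{S(X,B)} by specializing to the trivial boundary case $B = C = 0$. The key observation is that the stringy invariant $S(X,B)$ collapses to the naive class $[X]$ when $B = 0$, because in the defining sum only the term $J = \emptyset$ survives (with $B_\emptyset = X$ and empty product equal to $1$); the support of the zero divisor is vacuously simple normal crossing, and the vacuous condition $b_i < 1$ holds trivially. So $S(X, 0) = [X]$ and $S(Y, 0) = [Y]$.

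Concretely, I would proceed as follows. First I would unpack the definition of $K$-equivalence: there is a third smooth projective variety $Z$ with birational morphisms $p: Z \to X$ and $q: Z \to Y$ such that $p^*K_X \sim q^*K_Y$. This is precisely the hypothesis of Theorem~\ref{S(X,B)} applied to the pairs $(X, 0)$ and $(Y, 0)$. Next, I would invoke Theorem~\ref{S(X,B)} to conclude
\[
S(X, 0) = S(Y, 0).
\]
Evaluating both stringy invariants using the formula $S(X,B) = \sum_{J \subset I}[B_J^o]\prod_{j \in J}(L-1)/(L^{1-b_j}-1)$ at the empty index set $I = \emptyset$ then gives $[X] = [Y]$.

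Finally I would check that this equality lives in the correct ring. The remark after the definition of $S(X,B)$ states that when all the coefficients $b_i$ are integers, the stringy invariant lies in the localization $K_0(\text{Var}_k)([\mathbf{P}^n]^{-1} \mid n > 0)$; here all (zero) coefficients are trivially integral, so no completion is required and the identity holds in that localized ring.

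There is essentially no obstacle: the entire content is already contained in Theorem~\ref{S(X,B)}, and the only thing to verify is the trivial computation that $S(\cdot, 0) = [\cdot]$. If any issue were to arise, it would be a notational one about whether one admits the empty set of indices $I = \emptyset$ in the definition of $S(X,B)$; this is the standard convention (with $B_\emptyset = X$ and an empty product equal to $1$), and it is implicit in the example following Lemma~\ref{permissible}, where the case $B = 0$ is computed as $S(X,B) = [X]$.
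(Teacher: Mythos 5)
Your proposal is correct and is exactly the argument the paper intends: the corollary is stated immediately after Theorem~\ref{S(X,B)} with no separate proof, precisely because it is the specialization $B = C = 0$ together with the trivial evaluation $S(X,0) = [X]$ (which the paper itself records in the example following Lemma~\ref{permissible}). Your remark that the intermediate stringy invariants have integer coefficients (the discrepancy divisors of smooth varieties), so that everything stays in the localization $K_0(\text{Var}_k)([\mathbf{P}^n]^{-1} \mid n > 0)$ rather than a completion, is the right justification for the ring in which the equality holds.
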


Bondal-Larsen-Lunts defined a Grothendieck ring $K_0(\text{Cat})$ for derived categories 
\cite{Bondal-Larsen-Lunts}.
They considered pretriangutated DG categories, i.e., enhanced triangulated categories, as generators and 
semi-orthogonal decompositions of triangulated categories as relations: 
if $\mathcal{A} = \langle \mathcal{B}, \mathcal{C} \rangle$, then 
$[\mathcal{A}] = [\mathcal{B}] + [\mathcal{C}]$ in $K_0(\text{Cat})$.

For a smooth projective variety $X$, we can consider its class $[D^b(\text{coh}(X))]$ in $K_0(\text{Cat})$.
$[D^b(\text{coh}(\text{Spec }k))]$ is the unit of the ring $K_0(\text{Cat})$.
By \cite{Beilinson}, we have $[D^b(\text{coh}(\mathbf{P}^n))] = n+1$.
Therefore we have the following corollary:

\begin{Cor}
Let $X$ and $Y$ be smooth projective varieties over a field $k$ of characteristic $0$.
Assume that $X$ and $Y$ are $K$-equivalent.
Then $[D^b(\text{coh}(X))] = [D^b(\text{coh}(Y))] \in K_0(\text{Cat}) \otimes \mathbf{Q}$.
\end{Cor}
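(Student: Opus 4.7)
The plan is to construct a ring homomorphism
\[
\tilde\Phi \colon K_0(\text{Var}_k)([\mathbf{P}^n]^{-1} \mid n > 0) \otimes \mathbf{Q} \longrightarrow K_0(\text{Cat}) \otimes \mathbf{Q}
\]
that sends $[X]$ to $[D^b(\text{coh}(X))]$ for every smooth projective variety $X$; once $\tilde\Phi$ is available, the corollary follows by applying it to the equality $[X] = [Y]$ furnished by the previous corollary.

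First I would invoke Bittner's presentation of $K_0(\text{Var}_k)$ in characteristic zero: the ring is generated by isomorphism classes of smooth projective varieties, subject only to $[\emptyset] = 0$ and the blow-up relation $[\text{Bl}_C X] + [C] = [X] + [E]$, where $C \subset X$ is a smooth closed subvariety of codimension $c$ and $E = \mathbf{P}(N_{C/X})$ is the exceptional divisor. On these generators I set $\Phi([X]) = [D^b(\text{coh}(X))]$. Multiplicativity is automatic because the external tensor product identifies $D^b(\text{coh}(X \times Y))$ with the tensor product of $D^b(\text{coh}(X))$ and $D^b(\text{coh}(Y))$ as pretriangulated DG enhancements.

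The key verification is that $\Phi$ respects the Bittner blow-up relation. Orlov's blow-up formula gives a semi-orthogonal decomposition of $D^b(\text{coh}(\text{Bl}_C X))$ consisting of $D^b(\text{coh}(X))$ together with $c-1$ copies of $D^b(\text{coh}(C))$, while the projective bundle formula gives an SOD of $D^b(\text{coh}(E))$ into $c$ copies of $D^b(\text{coh}(C))$. Passing to classes in $K_0(\text{Cat})$, this yields
\[
\Phi([\text{Bl}_C X]) = \Phi([X]) + (c-1)\Phi([C]), \qquad \Phi([E]) = c\,\Phi([C]),
\]
whence $\Phi([\text{Bl}_C X]) + \Phi([C]) = \Phi([X]) + \Phi([E])$, so $\Phi$ descends to a well-defined ring homomorphism $K_0(\text{Var}_k) \to K_0(\text{Cat})$.

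Finally, by Beilinson's theorem $\Phi([\mathbf{P}^n]) = n+1$, which is invertible in $K_0(\text{Cat}) \otimes \mathbf{Q}$; hence $\Phi$ extends to the localization $\tilde\Phi$ indicated above, completing the argument. The main delicate point is exactly this blow-up-relation check, which is the derived-category analogue of the invariance of the stringy invariant $S(X,B)$ under permissible blow-ups proved in Lemma~\ref{permissible}: in both settings the essential content is the additive identity between a smooth blow-up and its exceptional projective bundle, and it is precisely this identity that allows the translation from birational to categorical invariants.
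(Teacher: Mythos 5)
Your argument is correct and is essentially the route the paper intends: the paper derives this corollary by combining the Bondal--Larsen--Lunts motivic measure $[X]\mapsto[D^b(\text{coh}(X))]$, Beilinson's computation $[D^b(\text{coh}(\mathbf{P}^n))]=n+1$ (invertible after $\otimes\,\mathbf{Q}$), and the preceding corollary giving $[X]=[Y]$ in $K_0(\text{Var}_k)([\mathbf{P}^n]^{-1}\mid n>0)$. You merely make explicit the well-definedness check (Bittner's presentation plus Orlov's blow-up and projective-bundle formulas) that the paper delegates to the citation of \cite{Bondal-Larsen-Lunts}, so there is nothing to correct.
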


We remark that, if $X$ is a Calabi-Yau manifold, then $D^b(\text{coh}(X))$ has no semi-orthogonal decomposition.
Thus we have a weak supporting fact for a conjecture that, if $X$ and $Y$ are birationally equivalent 
Calabi-Yau manifolds, then $D^b(\text{coh}(X))$ and $D^b(\text{coh}(Y))$ are equivalent. 

\section{Toric and toroidal cases}\label{toric}

We review results on DK hypothesis for toric and toroidal varieties proved in 
\cite{log crep}, \cite{toric}, \cite{toricII} and \cite{toricIII}.
We proved the conjecture for toric birtaional maps of elementary type, i.e, 
divisorial contractions and flips, not only for $K$-equivalences but also for $K$-inequalities.
We constructed kernels of fully faithful functors as the structure sheaves of the fiber products.

Toroidal varieties are those which are etale locally isomorphic to toric varieties.
The results on toric morphisms are extended to toroidal ones.
Indeed, since the kernels are constructed globally by using the fiber product, 
the fully faithfulness of functors can be checked locally, i.e.,
the assertions are glued globally.
We need the toroidal version of the theorem in 
the application to the McKay correspondence for finite subgroups of $GL(3,\mathbf{C})$ treated in \S \ref{GL3}.

We consider $\mathbf{Q}$-factorial toric pairs with standard coefficients in this section:

\begin{Lem}
Let $X$ be a normal toric variety and let $B$ be a toric $\mathbf{Q}$-divisor.
The pair $(X,B)$ is of quotient type if and only if $X$ is $\mathbf{Q}$-factorial and $B$ has 
standard coefficients, i.e., they belong to the set $\{1-1/n \mid n \in \mathbf{Z}_{>0}\}$.
\end{Lem}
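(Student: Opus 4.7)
The proof reduces to a Riemann--Hurwitz calculation on torus-invariant divisors, combined with the toric characterization of $\mathbf{Q}$-factoriality as simpliciality of the fan. I use throughout the toric formula $K_X = -\sum_i D_i$, the sum over torus-invariant prime divisors. The core identity is the following: if $\pi: U \to X$ is quasi-finite surjective from a smooth scheme and $E$ is a prime divisor of $U$ dominating $D_i$ with ramification index $e_E$, then
\[
\pi^*(K_X+B) - K_U \;=\; \sum_E \bigl(b_{\pi(E)}\, e_E - (e_E-1)\bigr) E,
\]
the sum ranging over the ramification divisors of $\pi$. Hence $\pi^*(K_X+B) = K_U$ is equivalent to $\pi$ being étale in codimension one outside $\bigcup_i D_i$, and for each ramified $E$ above $D_i$ satisfying $e_E = 1/(1-b_i)$; in particular $1-b_i$ must be the reciprocal of a positive integer, all ramification indices above a fixed $D_i$ agree, and $b_i$ is standard.

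For the ``if'' direction I build $U$ as a disjoint union $\coprod_\sigma U_\sigma$ over the maximal cones of the simplicial fan $\Sigma$. Given $\sigma$ with rays generated by primitive vectors $v_1,\ldots,v_s$, simpliciality lets me extend to a $\mathbf{Q}$-basis $(v_1,\ldots,v_s,w_1,\ldots,w_{d-s})$ of $N_{\mathbf{Q}}$ with $w_j \in N$, and I define the finite-index sublattice $N_\sigma := \bigoplus_i \mathbf{Z}(n_i v_i) \oplus \bigoplus_j \mathbf{Z} w_j \subset N$. Viewed in $N_\sigma$ the cone $\sigma$ is generated by part of a $\mathbf{Z}$-basis, so the associated affine toric variety $U_\sigma$ is smooth; the morphism $U_\sigma \to X_\sigma$ induced by $N_\sigma \hookrightarrow N$ is finite, and the primitive generator of $\rho_i$ in $N_\sigma$ is $n_i v_i$, giving ramification index $n_i$ along the divisor above $D_i$. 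The core identity then yields $\pi^*(K_X+B) = K_U$ on each $U_\sigma$.

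For the ``only if'' direction, the standardness of the coefficients $b_i$ is immediate from the core identity applied to any ramified $E$ above $D_i$. To obtain $\mathbf{Q}$-factoriality, fix a maximal cone $\sigma$ and a closed point $u \in U$ above the distinguished point of $X_\sigma$; after passing to an étale neighborhood of $u$ and taking a Galois closure of the corresponding extension of function fields, one realizes $X_\sigma$ étale-locally as a quotient $V/G$ of a smooth scheme $V$ by a finite group $G$. Thus $X_\sigma$ has quotient singularities, which are $\mathbf{Q}$-factorial, and for a toric variety $\mathbf{Q}$-factoriality is equivalent to simpliciality of the fan, so $\sigma$ is simplicial. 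The main obstacle is precisely this last reduction to a finite-group quotient presentation: $\pi$ is not assumed Galois, toric, or of constant generic degree over components of $X$, so the passage through a henselization and Galois-closure step---after which the quotient-singularity dictionary becomes available---is the genuinely nontrivial point in the argument; everything else is bookkeeping with the toric canonical formula.
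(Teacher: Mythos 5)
The paper states this lemma without proof, so there is nothing to compare against beyond the surrounding remarks (that a toric variety is $\mathbf{Q}$-factorial iff it has only abelian quotient singularities). Judged on its own, your ``if'' direction is correct and essentially complete: the sublattices $N_\sigma=\bigoplus_i\mathbf{Z}(n_iv_i)\oplus\bigoplus_j\mathbf{Z}w_j$ do give smooth toric covers $U_\sigma\to X_\sigma$ with ramification index exactly $n_i$ over $D_i$ (a smaller multiple $\lambda v_i$ in $N_\sigma$ would force $n_i\mid\lambda$), and the Riemann--Hurwitz bookkeeping closes. The deduction of standardness of the $b_i$ is also fine, with one small correction: the core identity forces \emph{every} component $E$ over $D_i$ with $b_i\neq 0$, not only the ramified ones, to satisfy $e_E=1/(1-b_i)$; an unramified component over such a $D_i$ would contribute $b_iE\neq 0$. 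Your later remark that all indices over a fixed $D_i$ agree shows you know this, but the equivalence as stated is off.

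The genuine gap is in the $\mathbf{Q}$-factoriality half of the ``only if'' direction. You assert that henselization plus a Galois closure realizes $X_\sigma$ \'etale-locally as $V/G$ with $V$ \emph{smooth}, and you correctly identify this as the crux---but you give no argument, and the assertion is not automatic: the normalization $V$ of $X$ in the Galois closure $L$ of $K(U)/K(X)$ is normal with no a priori reason to be smooth, and without smoothness of $V$ the quotient-singularity dictionary gives nothing. To close this one needs (i) your observation that crepancy forces all ramification indices over a fixed $D_i$ to equal $n_i$; (ii) cyclicity of tame inertia together with triviality of the normal core of $\text{Gal}(L/K(U))$ in $\text{Gal}(L/K(X))$, which yield that $V\to U$ is \'etale in codimension one; and (iii) purity of the branch locus (Zariski--Nagata), which upgrades this to $V\to U$ \'etale, hence $V$ smooth. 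Alternatively, you can avoid quotient singularities altogether: for a finite surjective morphism $V\to X$ of normal varieties with $V$ $\mathbf{Q}$-factorial, choosing a local equation $h$ of $m\pi^*D$ on the semilocal ring of the fiber and pushing forward by the norm $N_{K(V)/K(X)}$ shows that $m\deg(\pi)D$ is locally principal, so $X$ is $\mathbf{Q}$-factorial; Zariski's main theorem reduces the quasi-finite case to this, with no crepancy or Galois closure needed. In either route you should also note that the definition of quotient type, read literally, permits components of $U$ not dominating $X$, over which none of these local arguments apply; your choice of $u$ must be made on a dominating component.
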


A quotient singularity by a finite abelian group is toric. 
A toric variety is $\mathbf{Q}$-factorial if and only if it has only abelian quotient singularities. 
$\mathbf{Q}$-factorial toric pair is always KLT.

\begin{Defn}
A sequence of objects $(e_1,\dots,e_n)$ of a triangulated category $\mathcal{A}$ is said to be an
{\em exceptional collection} if the following condition is satisfied:
\[
\text{Hom}(e_i,e_j[p]) = \begin{cases} k \,\, &\text{ if } i = j, p = 0 \\
0 \,\, &\text{ if } i = j, p \ne 0 \\
0 \,\, &\text{ if } i > j, \forall p.
\end{cases}
\]
It is called {\em strong} if $\text{Hom}(e_i,e_j[p]) = 0$ for all $i,j$ and $p \ne 0$.
It is said to be {\em full}, or {\em generate} $\mathcal{A}$ if, for any $a \in \mathcal{A}$, 
$\text{Hom}(e_i,a[p]) = 0$ for all $i$ and $p$ implies $a \cong 0$.     
\end{Defn}

Each object $e_i$ is said to be an {\em exceptional object}.
The triangulated subcategory $\langle e_i \rangle$ generated by 
$e_i$ is equivalent to the derived category of a point $D^b(\text{coh}(\text{Spec }k))$.
For an exceptional collection, these subcategories are semi-orthogonal: 
$\langle e_i \rangle \perp \langle e_j \rangle$ for $i > j$.
If the collection is strong and full, then $\bigoplus_{i=1}^n e_i$ is a tilting generator, and there is an equivalence
\[
\mathcal{A} \cong D^b(\text{mod-}\text{End}(\bigoplus_{i=1}^n e_i))
\]
by Theorem~\ref{Rickard}.

\vskip 1pc

We start with the Fano case:

\begin{Thm}[Fano variety \cite{toric}]
Let $X$ be a projective $\mathbf{Q}$-factorial toric variety, let $B$ be a toric $\mathbf{Q}$-divisor
with standard coefficients, and let $\tilde X$ be the smooth Deligne-Mumford stack associated to the pair $(X,B)$.
Assume that $-(K_X+B)$ is ample and that the Picard number $\rho(X) = 1$.  
Then the derived category $D^b(\text{coh}(\tilde X))$ 
is generated by a strong and full exceptional collection consisting of line bundles $(L_1,\dots,L_n)$ on $\tilde X$.
\end{Thm}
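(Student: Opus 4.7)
The plan is to reduce to the case of a weighted projective stack and then apply a Beilinson-type resolution of the diagonal.

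\medskip

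\emph{Identification of $\tilde X$.} Since $X$ is projective $\mathbf{Q}$-factorial toric with $\rho(X)=1$, its fan has exactly $n+1$ rays (where $n=\dim X$), so $X \cong \mathbf{P}(a_0,\ldots,a_n)$ for positive integer weights $a_i$. Because $B$ is forced to be the sum over rays of $(1-1/a_\rho)D_\rho$ by the standard-coefficient condition, the stack $\tilde X$ constructed in \S\ref{quotient} is canonically the weighted projective stack $[(\mathbf{A}^{n+1}\setminus\{0\})/\mathbf{G}_m]$, where $\mathbf{G}_m$ acts with weights $a_i$. Set $N=\sum_i a_i$; then $\omega_{\tilde X} \cong \mathcal{O}_{\tilde X}(-N)$, and the Fano hypothesis $-(K_X+B)$ ample is equivalent to $N>0$.

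\medskip

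\emph{Choice of collection and exceptionality.} I would take $L_i = \mathcal{O}_{\tilde X}(i-1)$ for $i=1,\ldots,N$. On the weighted projective stack, $H^0(\tilde X,\mathcal{O}(k))$ is the space of weighted degree-$k$ monomials in $x_0,\ldots,x_n$, hence vanishes for $k<0$; $H^n(\tilde X,\mathcal{O}(k)) \cong H^0(\tilde X,\mathcal{O}(-k-N))^*$ by Serre duality; the intermediate cohomology vanishes. Since $\text{Hom}(L_i,L_j[p])=H^p(\tilde X,\mathcal{O}(j-i))$ and $-(N-1)\le j-i\le N-1$, one checks immediately that the collection is strong exceptional: for $i<j$ only $H^0$ can survive (strongness), for $i>j$ neither $H^0$ nor $H^n$ contributes (since $-N<j-i<0$).

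\medskip

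\emph{Fullness via resolution of the diagonal.} The main step is to construct a Beilinson-type resolution of $\mathcal{O}_\Delta$ on $\tilde X \times \tilde X$ whose terms are finite direct sums of sheaves of the form $\mathcal{O}(-j)\boxtimes\mathcal{O}(j)$ for $0\le j\le N-1$. The stacky Euler sequence
\[
0 \to \mathcal{O}_{\tilde X} \to \bigoplus_{i=0}^n \mathcal{O}_{\tilde X}(a_i) \to T_{\tilde X} \to 0
\]
produces a tautological section $s$ of $p_1^*\mathcal{O}(1)\otimes p_2^*T_{\tilde X}(-1)$ on $\tilde X\times\tilde X$ whose zero locus is exactly the diagonal; the associated Koszul complex then resolves $\mathcal{O}_\Delta$, and its terms acquire the required form once the exterior powers of $T_{\tilde X}(-1)$ are expanded via the dual Euler sequence. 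Applying the Fourier-Mukai formula $a\simeq Rp_{1*}(p_2^*a\otimes\mathcal{O}_\Delta)$ to this resolution exhibits every $a\in D^b(\text{coh}(\tilde X))$ inside the triangulated subcategory generated by $L_1,\ldots,L_N$.

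\medskip

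\emph{Conclusion and main obstacle.} Combined with Theorem~\ref{Rickard}, fullness of the strong exceptional collection yields the desired equivalence $D^b(\text{coh}(\tilde X)) \cong D^b(\text{mod-}\text{End}(\bigoplus_i L_i))$. The hard part is carrying out the diagonal resolution rigorously on the stack: one must verify that $s$ cuts out $\Delta$ transversally even at the strata with non-trivial stabilizers, and that the weighted Koszul cohomology decomposes so that only the twists $\mathcal{O}(j)$ with $0\le j\le N-1$ appear. This requires the weighted analogues of the Bott vanishing computations that underpin Beilinson's original argument for $\mathbf{P}^n$.
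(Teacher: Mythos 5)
The survey states this theorem without proof, quoting it from \cite{toric}, so I am judging your argument on its own and against the proof given there. Your overall strategy --- present $\tilde X$ as a quotient of $\mathbf{A}^{n+1}\setminus\{0\}$, pick line bundles in a fundamental range of the Picard group, verify exceptionality by cohomology vanishing plus Serre duality, and prove fullness by a Koszul-type resolution coming from the Euler/Cox presentation --- is the right spirit and is close to the actual argument. But your opening reduction contains a genuine error that makes the chosen collection wrong.

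A projective $\mathbf{Q}$-factorial toric variety with $\rho(X)=1$ does have exactly $n+1$ rays, but it is only a \emph{fake} weighted projective space: the primitive ray generators $v_0,\dots,v_n$ need only span a finite-index sublattice $N'\subset N$, so $X\cong\mathbf{P}(a_0,\dots,a_n)/A$ with $A=N/N'$ a finite abelian group. Moreover $B$ is not ``forced'' by $X$: it is given data, $B=\sum_\rho(1-1/n_\rho)D_\rho$ with arbitrary $n_\rho\in\mathbf{Z}_{>0}$, and the stack attached to $(X,B)$ in \S\ref{quotient} is the toric Deligne--Mumford stack whose stacky fan has ray generators $n_\rho v_\rho$. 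The upshot is that $\tilde X=[(\mathbf{A}^{n+1}\setminus\{0\})/G]$ where $G$ is a rank-one diagonalizable group of the form $\mathbf{G}_m\times A'$ with $A'$ finite and in general nontrivial, so $\operatorname{Pic}(\tilde X)\cong\hat G\cong\mathbf{Z}\oplus\widehat{A'}$ and $K_0(\tilde X)$ has rank $|A'|\sum_i a_i$ rather than $\sum_i a_i$. Hence your collection $\mathcal{O},\dots,\mathcal{O}(N-1)$ cannot be full whenever $A'\neq 1$: for instance, for $X=\mathbf{P}^2/(\mathbf{Z}/3)$ (the toric $\frac13(1,1)$ fake projective plane) with $B=0$ one needs nine line bundles --- the three characters of $\mathbf{Z}/3$ twisting each of $\mathcal{O},\mathcal{O}(1),\mathcal{O}(2)$ --- not three. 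The repair is to index the collection by all characters of $G$ lying in a suitable fundamental domain for the action of $\otimes\,\omega_{\tilde X}^{-1}$ on $\hat G$, and to run your Euler/Koszul argument $G$-equivariantly on the Cox cover $\mathbf{A}^{n+1}\setminus\{0\}$, decomposing the Koszul complex into $\hat G$-isotypic pieces; with that modification your exceptionality check (vanishing of intermediate cohomology and the degree-$n$ Serre-duality computation) and the generation step go through, and this is essentially what \cite{toric} does. The transversality and Bott-type vanishing issues you flag at the end are real but become standard once the argument is set up on the smooth cover rather than on an honest weighted projective stack that $\tilde X$ generally is not.
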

 
Let $\pi_X: \tilde X \to X$ be the natural morphism.
Then the direct images $\pi_{X*}L_i$ are reflexive sheaves of rank $1$ on $X$, and they do not satisfy
the vanishings of cohomologies which are necessary for exceptional collections.

A Mori fiber space is a relative version of a Fano variety:
 
\begin{Thm}[Mori fiber space \cite{toric}]\label{Mori}
Let $X$ be a projective $\mathbf{Q}$-factorial toric variety, let $B$ be a toric $\mathbf{Q}$-divisor on $X$ 
with standard coefficients, and let $\tilde X$ be the smooth Deligne-Mumford stack associated to the pair $(X,B)$.
Let $f: X \to Y$ be a surjective toric morphism to another projective $\mathbf{Q}$-factorial toric variety
with connected fibers. 
Assume that $-(K_X+B)$ is $f$-ample and that the relative Picard number $\rho(X/Y) = 1$.  
Then the following hold:

(1) There exists a toric $\mathbf{Q}$-divisor $C$ on $Y$ with standard coefficients, such that, 
if $\tilde Y$ is the smooth Deligne-Mumford stack associated to the pair $(Y,C)$, then $f$ induces 
a smooth morphism $\tilde f: \tilde X \to \tilde Y$.

(2) The functor $\tilde f^*: D^b(\text{coh}(\tilde Y)) \to D^b(\text{coh}(\tilde X))$ is fully faithful.

(3) There exists a sequence of line bundles $(L_1,\dots,L_n)$ on $\tilde X$ 
which is a strong and full {\em relative} exceptional collection in the following sense:
\[
R^p\tilde f_*\mathcal{H}om(L_i,L_j) = \begin{cases} \mathcal{O}_{\tilde Y} \,\, &\text{ if } i = j, p = 0 \\
0 \,\, &\text{ if } \forall i,j, p \ne 0, \\
0 \,\, &\text{ if } i > j, \forall p.
\end{cases}
\]
and there is a semi-orthogonal decomposition:
\[
D^b(\text{coh}(\tilde X)) = \langle \tilde f^*D^b(\text{coh}(\tilde Y)) \otimes L_1, \dots, 
\tilde f^*D^b(\text{coh}(\tilde Y)) \otimes L_n \rangle.
\]
\end{Thm}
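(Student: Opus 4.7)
The plan is to treat the three parts in order, exploiting the combinatorial nature of toric data throughout. For part (1), write $B = \sum_i (1-1/n_i)D_i$, where $D_i$ runs over the toric prime divisors of $X$ corresponding to primitive rays $v_i \in \Sigma_X(1)$. The toric contraction $f$ is encoded by a surjection of lattices $\phi\colon N_X \to N_Y$ and a map of fans; since $\rho(X/Y)=1$ and $-(K_X+B)$ is $f$-ample, the \emph{horizontal} rays of $\Sigma_X$ (those not in $\ker\phi$) are in bijection with the rays of $\Sigma_Y$, with $\phi(v_i) = m_{ij}w_j$ for the corresponding primitive generator $w_j \in \Sigma_Y(1)$. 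I would set $n_j' = n_i/m_{ij}$ and define $C = \sum_j (1-1/n_j')D_j$. The integrality $m_{ij}\mid n_i$ needed to make $C$ have standard coefficients follows from requiring $\tilde f$ to extend as a morphism of root stacks, a local toric computation on the stacky fan. With this choice the map of stacky fans has trivial stack divisibility along every horizontal ray, so $\tilde f\colon \tilde X \to \tilde Y$ is smooth by the standard toric-DM criterion.

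For part (3), the fibers of $\tilde f$ are smooth toric Fano DM stacks of Picard number one, so the Fano theorem stated just above yields, on each fiber, a strong full exceptional collection $(L_1^0,\dots,L_n^0)$ of line bundles. I would lift these to line bundles $L_i$ on the total space $\tilde X$ by prescribing toric characters whose restrictions realize the chosen weights fiberwise; such lifts exist because the restriction map $\text{Pic}(\tilde X) \to \text{Pic}(\text{fiber})$ is surjective in the toric setting, and they are unique up to twisting by $\tilde f^*\text{Pic}(\tilde Y)$, which is exactly the ambiguity absorbed in the notion of a relative exceptional collection. The required vanishings
\[
R^p\tilde f_*\mathcal{H}om(L_i,L_j) = 0 \quad (p \ne 0,\ \text{or } i>j \text{ and all } p)
\]
are then checked combinatorially: on a smooth toric DM stack the cohomology of a line bundle decomposes over characters, and $L_j \otimes L_i^{-1}$ has bounded vertical degree, so the \v{C}ech complex on the fan of $\tilde X$ reduces via base change and flatness of $\tilde f$ to the absolute vanishing statement on fibers.

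For part (2), I would arrange $L_1 = \mathcal{O}_{\tilde X}$ so that $\tilde f^*D^b(\text{coh}(\tilde Y))\otimes L_1 = \tilde f^*D^b(\text{coh}(\tilde Y))$ appears as one block of the sought decomposition; full faithfulness of $\tilde f^*$ then follows from part (3), since the case $i=j=1$ of the relative exceptional property is exactly $R\tilde f_*\mathcal{O}_{\tilde X} = \mathcal{O}_{\tilde Y}$ together with the projection formula. Semi-orthogonality of the blocks $\tilde f^*D^b(\text{coh}(\tilde Y))\otimes L_i$ reduces by the projection formula to the vanishings in (3), and generation of the whole $D^b(\text{coh}(\tilde X))$ by these blocks follows from a relative Beilinson-type resolution of the diagonal in $\tilde X \times_{\tilde Y} \tilde X$ built out of the $L_i$. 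The main obstacle I anticipate is the joint construction of the $L_i$ and the verification of the strong relative vanishing: producing global toric characters whose fiberwise reduction is the Beilinson collection and for which the combinatorial \v{C}ech complex on the fan of $\tilde X$ collapses in the correct relative sense --- not merely fiberwise --- requires careful analysis, and the divisibility $m_{ij}\mid n_i$ from part (1) is really a shadow of the same compatibility.
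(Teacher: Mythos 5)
The survey states this theorem without proof (it is imported from the reference \cite{toric}), so your sketch can only be measured against the construction that reference carries out; doing so exposes one concrete error in your part (1) that propagates through the whole write-up. The coefficient of $C$ along a ray $w_j$ of $\Sigma_Y$ is forced by the smoothness of $\tilde f$, and the local computation goes the opposite way from what you wrote. Near the generic point of the horizontal divisor $D_{v_i}$ the stack $\tilde X$ is the $n_i$-th root stack, $t^{n_i}=x$, while $\tilde Y$ is the $n'_j$-th root stack, $s^{n'_j}=y$, and $f^*y=x^{m_{ij}}\cdot(\text{unit})$; hence $s^{n'_j}=t^{n_i m_{ij}}\cdot(\text{unit})$, and the fibre of $\tilde f$ over $s=0$ is reduced (equivalently $\tilde f$ is smooth there) precisely when $n'_j=n_i m_{ij}$, i.e.\ $1-c_j=(1-b_i)/m_{ij}$. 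This is automatically a standard coefficient, so no divisibility hypothesis is needed or available. Your formula $n'_j=n_i/m_{ij}$ already fails for $B=0$ and a multiple fibre: take the surface with rays $(1,0),(-1,0),(0,1),(1,-2)$ fibred over $\mathbf{P}^1$ by the second projection; here $n_i=1$, $m_{ij}=2$, your $n'_j=1/2$ is not an integer and the ``required divisibility'' $2\mid 1$ is false, yet the theorem holds with $C=\tfrac12\,D_w$ and $\tilde Y$ the $\mathbf{P}^1$ with one $\mu_2$-point. The divisibility you describe as ``a shadow of the same compatibility'' is therefore a spurious condition created by the inverted formula, not a genuine constraint.

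The remainder of your outline is the right general strategy and matches the cited proof in spirit (explicit toric line bundles, combinatorial computation of $R^p\tilde f_*\mathcal{H}om(L_i,L_j)$, full faithfulness of $\tilde f^*$ from $R\tilde f_*\mathcal{O}_{\tilde X}=\mathcal{O}_{\tilde Y}$ and the projection formula), but the two load-bearing steps are asserted rather than argued. First, lifting the Beilinson collection from the \emph{generic} fibre does not by itself give a relative exceptional collection: the special fibres of a toric Mori fibre space with $\rho(X/Y)=1$ can be different weighted projective stacks (multiple fibres, as in the example above), and the vanishing of $R^p\tilde f_*\mathcal{H}om(L_i,L_j)$ must be verified over those strata as well --- this is exactly where the combinatorial work lies, and ``base change and flatness'' does not dispose of it. Second, generation: a relative resolution of the diagonal in $\tilde X\times_{\tilde Y}\tilde X$ built from the $L_i$ does not obviously exist when the fibres jump, and the generation statement has to be established by a different argument (showing the right orthogonal of the blocks vanishes, using the toric stratification). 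Neither point is unfixable, but as written they are the places where the actual proof happens.
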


Next we consider a divisorial contraction:

\begin{Thm}[divisorial contraction \cite{toric}, \cite{toricII}] 
Let $X$ be a projective $\mathbf{Q}$-factorial toric variety, let $B$ be a toric $\mathbf{Q}$-divisor on $X$ 
with standard coefficients, let $\tilde X$ be the smooth Deligne-Mumford stack associated to the pair $(X,B)$,
and let $\pi_X: \tilde X \to X$ be the natural morphism.
Let $f: X \to Y$ be a toric birational morphism to another projective $\mathbf{Q}$-factorial toric variety
whose exceptional locus is a prime divisor $E$, let $F = f(E) \subset Y$, $g = f \vert_E$, and let $C = f_*B$.
Write $B' = f_*^{-1}C$ and define a $\mathbf{Q}$-divisor $B_E$ on $E$ by an adjunction:
\[
(K_X + B' + E) \vert_E = K_E + B_E.
\]
Then $B_E$ has standard coefficients, and 
$g: E \to F$ for the pair $(E,B_E)$ is a Mori fiber space as in Theorem~\ref{Mori}. 
Let $C_F$ be a toric $\mathbf{Q}$-divisor on $F$ determined in loc. cit. (1).
Let $\tilde Y$, $\tilde E$ and $\tilde F$ be the smooth Deligne-Mumford stacks associated to the pairs 
$(Y,C)$, $(E, B_E)$ and $(F, C_F)$, respectively, and  
let $\pi_Y: \tilde Y \to Y$, $\pi_E: \tilde E \to E$ and $\pi_F: \tilde F \to F$ be the natural morphisms.
There is a smooth morphism $\tilde g: \tilde E \to \tilde F$.
Let $Z = \tilde X \times_Y \tilde Y$, $V = \tilde E \times_X \tilde X$ and $W = \tilde F \times_F \tilde Y$
be the fiber products with natural morphisms $p_1: Z \to \tilde X$ and $p_2: Z \to \tilde Y$,
$q_1: V \to \tilde E$ and $q_2: V \to \tilde X$, and
$r_1: W \to \tilde F$ and $r_2: W \to \tilde Y$, respectively.

\vskip .5pc 

(A) Assume that $K_X+B > f^*(K_Y+C)$.
Then the following hold:

(A-1) The functors $\Phi = p_{1*}p_2^*: D^b(\text{coh}(\tilde Y)) \to D^b(\text{coh}(\tilde X))$ and
$\Psi = q_{2*}q_1^*\tilde g^*: D^b(\text{coh}(\tilde F)) \to D^b(\text{coh}(\tilde X))$ are fully faithful.

(A-2) There exists a sequence of line bundles $(L_1,\dots,L_n)$ on $\tilde X$ 
such that there is a semi-orthogonal decomposition:
\[
D^b(\text{coh}(\tilde X)) = \langle \Psi(D^b(\text{coh}(\tilde F))) \otimes L_1, \dots, 
\Psi(D^b(\text{coh}(\tilde F))) \otimes L_n, \Phi(D^b(\text{coh}(\tilde Y))) \rangle.
\]

\vskip .5pc 

(B) Assume that $K_X+B = f^*(K_Y+C)$.
Then the functor $\Phi = p_{1*}p_2^*: D^b(\text{coh}(\tilde Y)) \to D^b(\text{coh}(\tilde X))$
is an equivalence.

\vskip .5pc 

(C) Assume that $K_X+B < f^*(K_Y+C)$.
Then the following hold:

(C-1) The functors $\Phi = p_{2*}p_1^*: D^b(\text{coh}(\tilde X)) \to D^b(\text{coh}(\tilde Y))$
and $\Psi = r_{2*}r_1^*: D^b(\text{coh}(\tilde F)) \to D^b(\text{coh}(\tilde Y))$ are fully faithful.

(C-2) There exists a sequence of line bundles $(L_1,\dots,L_n)$ on $\tilde Y$ 
such that there is a semi-orthogonal decomposition:
\[
D^b(\text{coh}(\tilde Y)) = \langle \Psi(D^b(\text{coh}(\tilde F))) \otimes L_1, \dots, 
\Psi(D^b(\text{coh}(\tilde F))) \otimes L_n, \Phi(D^b(\text{coh}(\tilde X))) \rangle.
\]
\end{Thm}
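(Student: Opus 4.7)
The plan is to exploit the toric structure to localize the problem on fans and use Fourier--Mukai transforms with explicit kernels. The functors $\Phi$ and $\Psi$ are induced by structure sheaves of fiber products (possibly twisted by $\tilde g^*$ or by a line bundle), so their kernels are defined globally and behave well with respect to restriction to toric charts. Consequently, both fully-faithfulness and the various semi-orthogonality and generation statements can be verified on affine toric pieces and reduced to combinatorial statements about the fans. The fan of $X$ is obtained from that of $Y$ by a single star-subdivision along a ray $\rho_E$ corresponding to $E$, and the stacks $\tilde X, \tilde Y, \tilde E, \tilde F$ all live on closely related combinatorial data after rescaling primitive generators to account for the standard coefficients of the boundary divisors.

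For case (B), the crepancy $K_X+B = f^*(K_Y+C)$ is the numerical condition that the scaled primitive generator of the new ray $\rho_E$ lies on the canonical hyperplane of the cone it subdivides. I would verify directly on affine toric charts that $Rp_{i*}\mathcal{O}_Z \cong \mathcal{O}$ for $i=1,2$, whence $\Phi$ is fully faithful by the projection formula and adjunction. Since the Serre functors on both sides are given by $\omega$-twists that agree under crepancy and $\Phi$ intertwines them, the fully faithful functor is an equivalence by Bridgeland's criterion applied to the spanning class of skyscrapers on closed toric points.

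For cases (A) and (C), the key extra ingredient is the Mori fiber space structure $\tilde g: \tilde E \to \tilde F$ coming from the adjunction, which by Theorem~\ref{Mori} produces a relative full and strong exceptional collection $(L_1,\dots,L_n)$ on $\tilde E$. I would lift these to toric line bundles on the ambient stack ($\tilde X$ in case (A), $\tilde Y$ in case (C)) by prescribing their weights at $\rho_E$ and setting the weights at the other rays to zero. The mutual semi-orthogonality of the components $\Psi(D^b(\text{coh}(\tilde F))) \otimes L_i$ reduces along $q_2$ (resp.\ $r_2$) to the relative exceptionality on $\tilde E/\tilde F$, while their semi-orthogonality against the image of $\Phi$ becomes a cohomology-vanishing statement for toric line bundles that can be read off from Bott-type vanishing for the fibers of $\tilde g$. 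Generation of the claimed subcategory would be established by induction on the toric strata of the ambient stack, using the Fano and Mori-fiber-space theorems already cited to dispose of the strata lying inside the exceptional locus.

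The main obstacle is the careful choice of the line bundles $L_i$ and the verification of the associated vanishings. The weights on $\rho_E$ must simultaneously lie in the acyclic range for pushforward along $\tilde g$ and avoid producing spurious higher cohomology after being lifted to the ambient stack; it is precisely the sign of the discrepancy $K_X+B - f^*(K_Y+C)$ that determines in which of the two stacks an admissible window of weights of size $n$ exists. Extracting the correct window combinatorially, and then checking that nothing is lost in the lift from $\tilde E$ to $\tilde X$ (or $\tilde Y$), is where the bulk of the toric bookkeeping lies and where the asymmetry between cases (A) and (C) emerges from what is otherwise a symmetric setup.
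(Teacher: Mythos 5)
The paper itself does not prove this theorem: it quotes it from \cite{toric} and \cite{toricII}, and only indicates the method in \S 5 (the proof is by tilting generators, i.e.\ explicit full strong exceptional collections of line bundles, with the Fourier--Mukai kernels given by structure sheaves of fiber products so that fully faithfulness localizes over $Y$ and the statements glue, which is also what makes the toroidal generalization possible). Your overall architecture --- fiber-product kernels, reduction to affine toric charts, the relative exceptional collection on $\tilde E$ over $\tilde F$ supplying the blocks $\Psi(D^b(\text{coh}(\tilde F)))\otimes L_i$, and the sign of the discrepancy at $\rho_E$ dictating on which side a window of weights of the right size exists --- is consistent with that method, so in outline you are on the intended track.

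There is, however, a genuine gap in your case (B), which propagates to (A-1) and (C-1). From $Rp_{1*}\mathcal{O}_Z\cong\mathcal{O}_{\tilde X}$ and $Rp_{2*}\mathcal{O}_Z\cong\mathcal{O}_{\tilde Y}$ you cannot conclude ``by the projection formula and adjunction'' that $\Phi=p_{1*}p_2^*$ is fully faithful. That argument establishes fully faithfulness of a pullback $Lp^*$ along a single morphism $p$ with $Rp_*\mathcal{O}=\mathcal{O}$; here $\Phi$ is a pullback along $p_2$ followed by a pushforward along the \emph{different} projection $p_1$, both of which have positive-dimensional fibers over the exceptional locus, and composing a fully faithful functor with a pushforward destroys fully faithfulness in general. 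The actual content of the proof is the explicit computation of $\text{Hom}(\Phi(a),\Phi(b)[p])$ for $a,b$ running over a generating collection of line bundles (equivalently over the spanning class of point objects, which on a Deligne--Mumford stack must include the twisted sectors at stacky points, not only skyscrapers at closed points), carried out by lattice-point counting on the fibers of $p_1$ and $p_2$ over $F$; one must also verify that the fiber product $\tilde X\times_Y\tilde Y$, which a priori need not be irreducible or reduced, has a structure sheaf with the required convolution properties. Your outline presupposes exactly these computations rather than supplying them, and the same issue recurs in your semi-orthogonality claims, where ``Bott-type vanishing for the fibers of $\tilde g$'' stands in for the toric cohomology estimates that constitute the bulk of \cite{toric} and \cite{toricII}.
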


In the case (C), the morphism $f: X \to Y$ is often called a {\em divisorial extraction} from $Y$.

Finally we consider a flip:

\begin{Thm}[flip \cite{toric}] 
Let $X$ (resp. $Y$) be a projective $\mathbf{Q}$-factorial toric variety, 
let $B$ (resp. $C$) be a toric $\mathbf{Q}$-divisor on $X$ (resp. $Y$)
with standard coefficients, let $\tilde X$ (resp. $\tilde Y$) 
be the smooth Deligne-Mumford stack associated to the pair $(X,B)$ (resp. $(Y,C)$),
and let $\pi_X: \tilde X \to X$ (resp. $\pi_Y: \tilde Y \to Y$) be the natural morphism.
Let $f: X \to S$ and $f': Y \to S$ be toric birational morphisms to another toric variety
whose exceptional loci $E$ and $E'$ have codimension at least $2$ and such that 
the relative Picard numbers $\rho(X/S) = \rho(Y/S) = 1$.
Let $F = f(E) \subset S$, $g = f \vert_E$.
Assume that $C = f'_*{}^{-1}f_*B$.
Let $E_1,\dots,E_c$ be the toric prime divisors containing $E$ for $c = \text{codim }E$, 
write $B = B' + \sum_{i=1}^c e_iE_i$ where $B'$ does not contain the $E_i$, 
and define a $\mathbf{Q}$-divisor $B_E$ on $E$ by the adjunction 
\[
(K_X+B'+ \sum_{i=1}^c E_i) \vert_E = K_E + B_E.
\]
Then $B_E$ has standard coefficients, and 
$g: E \to F$ for the pair $(E,B_E)$ is a Mori fiber space as in Theorem~\ref{Mori}. 
Let $C_F$ be a toric $\mathbf{Q}$-divisor on $F$ determined in loc. cit. (1).
Let $\tilde E$ and $\tilde F$ be the smooth Deligne-Mumford stacks associated to the pairs 
$(E, B_E)$ and $(F, C_F)$, respectively, and  
let $\pi_E: \tilde E \to E$ and $\pi_F: \tilde F \to F$ be the natural morphisms.
There is a smooth morphism $\tilde g: \tilde E \to \tilde F$.
Let $Z = \tilde X \times_S \tilde Y$ and $V = \tilde E \times_X \tilde X$ be the fiber products 
with natural morphisms $p_1: Z \to \tilde X$ and $p_2: Z \to \tilde Y$, and
$q_1: V \to \tilde E$ and $q_2: V \to \tilde X$, respectively.
Let $h: W \to X$ and $h': W \to Y$ be birational morphisms from a smooth projective variety such that 
$f \circ h = f' \circ h'$.  

\vskip .5pc 

(A) Assume that $h^*(K_X+B) > h'{}^*(K_Y+C)$.
Then the following hold:

(A-1) The functors $\Phi = p_{1*}p_2^*: D^b(\text{coh}(\tilde Y)) \to D^b(\text{coh}(\tilde X))$ and
$\Psi = q_{2*}q_1^*\tilde g^*: D^b(\text{coh}(\tilde F)) \to D^b(\text{coh}(\tilde X))$ are fully faithful.

(A-2) There exists a sequence of line bundles $(L_1,\dots,L_n)$ on $\tilde X$ 
such that there is a semi-orthogonal decomposition:
\[
D^b(\text{coh}(\tilde X)) = \langle \Psi(D^b(\text{coh}(\tilde F))) \otimes L_1, \dots, 
\Psi(D^b(\text{coh}(\tilde F))) \otimes L_n, \Phi(D^b(\text{coh}(\tilde Y))) \rangle.
\]

\vskip .5pc 

(B) Assume that $h^*(K_X+B) =h'{}^*(K_Y+C)$.
Then the functor $\Phi = p_{1*}p_2^*: D^b(\text{coh}(\tilde Y)) \to D^b(\text{coh}(\tilde X))$
is an equivalence.
\end{Thm}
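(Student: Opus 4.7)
The plan is to adapt the explicit combinatorial method already used for the Fano, Mori fiber space and divisorial contraction cases, with the structure sheaf $\mathcal{O}_Z$ of the fiber product serving as the Fourier--Mukai kernel for both $\Phi$ and its adjoint. First I would reduce to an affine toric model: since $f$ and $f'$ are small toric contractions with $\rho(X/S) = \rho(Y/S) = 1$, the flip is locally described by crossing a single wall of the secondary fan over an affine open $\mathrm{Spec}\,\mathbf{C}[\sigma^\vee \cap M]$ of $S$, and the stacky fiber product $Z = \tilde X \times_S \tilde Y$ localizes compatibly. Because $Z$ is built functorially out of $\tilde X$ and $\tilde Y$, any fully faithfulness established on the affine pieces glues to give the global statement, just as in the toroidal extension of the divisorial case.

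Next I would establish fully faithfulness of $\Phi = p_{1*}p_2^*$ in both cases by a spanning-class computation on closed residue objects of $\tilde Y$. The key input is a $T$-equivariant resolution of $\mathcal{O}_Z$ by line bundles pulled back from $\tilde X$ and $\tilde Y$, which reduces $R\,\text{Hom}_{\tilde X}(\Phi(a),\Phi(b))$ to a finite sum of character computations indexed by the cones of the fans of $\tilde X$ and $\tilde Y$. The required vanishings follow from Demazure-type vanishing on the affine toric charts together with the hypothesis that the exceptional loci have codimension $\ge 2$, which kills the unwanted weights lying in the flip direction.

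For case (A) I would construct the line bundles $L_1,\dots,L_n$ on $\tilde X$ as the toric reflexive sheaves indexed by the lattice points in the half-open slab between the two chambers of the secondary fan associated to the wall-crossing; these are exactly the weights that trivialize along fibres of the Mori fibration $\tilde g : \tilde E \to \tilde F$ produced by Theorem~\ref{Mori}. Fully faithfulness of $\Psi = q_{2*}q_1^*\tilde g^*$ then follows from the Mori fiber space theorem applied to $(E, B_E) \to F$ combined with a flat base change along $q_2$. Both the semi-orthogonality $\Psi(D^b(\text{coh}(\tilde F))) \otimes L_i \perp \Phi(D^b(\text{coh}(\tilde Y)))$ and the mutual semi-orthogonality of distinct $L_i$-twists reduce, via the projection formula, to the same toric vanishings used for $\Phi$; generation would then follow by induction on the cells of the secondary fan, assembling a Koszul-type filtration of $\mathcal{O}_{\tilde X}$ whose graded pieces are the $L_i$ and $\Phi$-images.

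The main obstacle I expect is the bookkeeping in (A): one must identify the toric line bundles $L_i$ precisely, which amounts to counting characters in a half-open slab modified by the standard coefficients of $B$ that become fractional stabilizer weights on $\tilde X$, and verify that the adjunction $(K_X + B' + \sum_{i=1}^c E_i)\vert_E = K_E + B_E$ really produces standard coefficients so that Theorem~\ref{Mori} applies verbatim. Case (B) then falls out as the degenerate sub-case: the hypothesis $h^*(K_X+B) = h'{}^*(K_Y+C)$ forces the slab to be empty, so no $L_i$ appear and the semi-orthogonal complements of $\Phi$ on both sides are zero; combined with the fully faithfulness of both $\Phi$ and its right adjoint $p_{2*}p_1^*$ proved above, $\Phi$ is the asserted equivalence.
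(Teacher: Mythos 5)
Your outline is directed at a theorem that this survey states without proof: the text only records the strategy of \cite{toric}, namely that the kernels are structure sheaves of fiber products and that fully faithfulness is proved via tilting generators, i.e.\ full strong exceptional collections of line bundles (cf.\ Theorem~\ref{Rickard}), so that everything reduces to explicit toric cohomology computations on affine charts over $S$ and then glues globally. On the structural points your plan agrees with this: fiber-product kernel, localization to an affine toric model, explicit combinatorial vanishing, and line bundles $L_i$ attached to lattice points in an interval determined by the wall-crossing. So the architecture is the intended one.

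There are, however, three concrete soft spots where the argument as written would not go through. (i) You propose to verify fully faithfulness of $\Phi$ by Bridgeland's spanning-class criterion on point objects of $\tilde Y$. For a flip this is much harder than you suggest: for $y$ in the flipped locus, $\Phi(\mathcal{O}_y)$ is a genuinely derived object (compare the perverse point sheaves of \S\ref{approach}), and $\text{Hom}(\Phi(\mathcal{O}_y),\Phi(\mathcal{O}_{y'})[p])$ is not a finite sum of character computations; the method described here instead tests $\Phi$ on the line-bundle generators, for which every Hom is the cohomology of a toric divisorial sheaf. (ii) Fully faithfulness of $\Psi=q_{2*}q_1^*\tilde g^*$ is not a consequence of Theorem~\ref{Mori} plus flat base change: $q_2\colon V\to\tilde X$ maps onto the locus over $E$, which has codimension $c\ge 2$, so $q_{2*}$ is a pushforward along a closed, non-flat morphism and creates self-Exts (normal-bundle contributions) that must be shown to vanish; this is a separate toric computation of the same kind as the one for $\Phi$, and it is exactly where the twists by the $L_i$ intervene. (iii) You attribute the key vanishing to the hypothesis that the exceptional loci have codimension $\ge 2$, but that is merely the smallness of $f$ and $f'$; what distinguishes cases (A) and (B), fixes the number $n$ of extra line bundles, and makes the ``slab'' empty in case (B) is the discrepancy inequality $h^*(K_X+B)\ge h'{}^*(K_Y+C)$, read off from the position of the relevant lattice points relative to the hyperplane given by the wall relation among the primitive generators (with the standard coefficients of $B$ entering as fractional shifts). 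Until that dependence is made explicit, the proposal does not actually produce the decomposition in (A-2) or the equivalence in (B).
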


We need an additional result on the change of coefficients:

\begin{Thm}[\cite{log-crep}, \cite{toricIII}]
Let $X$ be a projective $\mathbf{Q}$-factorial toric variety, let $B$ (resp. $C$) be a toric $\mathbf{Q}$-divisor on $X$ 
with standard coefficients, let $\tilde X$ (resp. $\tilde Y$) 
be the smooth Deligne-Mumford stack associated to the pair $(X,B)$ (resp. $(X,C)$),
and let $\pi_X: \tilde X \to X$ (resp. $\pi_Y: \tilde Y \to X$) be the natural morphism.
Assume that $C < B$ and that $B - C$ is supported by a prime divisor $E$, i.e., 
$B = B' + eE$ and $C = B' + e'E$, where $B'$ does not contain $E$.
Define a $\mathbf{Q}$-divisor $B_E$ on $E$ by an adjunction:
\[
(K_X + B' + E) \vert_E = K_E + B_E.
\]
Then $B_E$ has standard coefficients.
Let $\tilde E$ be the smooth Deligne-Mumford stack associated to the pair 
$(E, B_E)$, and  
let $\pi_E: \tilde E \to E$ be the natural morphism.
Let $Z = \tilde X \times_X \tilde Y$ and $V = \tilde E \times_X \tilde X$ be the fiber products 
with natural morphisms $p_1: Z \to \tilde X$ and $p_2: Z \to \tilde Y$, and
$q_1: V \to \tilde E$ and $q_2: V \to \tilde X$.
Then the following hold:

(A-1) The functors $\Phi = p_{1*}p_2^*: D^b(\text{coh}(\tilde Y)) \to D^b(\text{coh}(\tilde X))$ and
$\Psi = q_{2*}q_1^*: D^b(\text{coh}(\tilde E)) \to D^b(\text{coh}(\tilde X))$ are fully faithful.

(A-2) There exists a sequence of line bundles $(L_1,\dots,L_n)$ on $\tilde X$ 
such that there is a semi-orthogonal decomposition:
\[
D^b(\text{coh}(\tilde X)) = \langle \Psi(D^b(\text{coh}(\tilde E))) \otimes L_1, \dots, 
\Psi(D^b(\text{coh}(\tilde E))) \otimes L_n, \Phi(D^b(\text{coh}(\tilde Y))) \rangle.
\]
\end{Thm}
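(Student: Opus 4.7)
The plan is to reduce the statement to explicit local toric computations on Deligne--Mumford stacks, in the spirit of the earlier theorems in this section and their proofs in \cite{toric}, \cite{toricII}, \cite{toricIII}. The key simplification compared to the flip and divisorial contraction cases is that $\tilde X$ and $\tilde Y$ share the same coarse space $X$ and the same stacky structure away from the divisor $E$; writing $e = 1-1/n_1$ and $e' = 1-1/n_2$ with $n_1 > n_2$, they differ only in the generic stabilizer order along the lift of $E$. Locally in toric charts $\tilde X$ therefore sits over $\tilde Y$ as a root-stack-type construction along the lift of $E$, with gerbe group of order $n_1/n_2$ (up to divisibility issues that are automatic in the toric, hence lattice, picture).

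Since the Fourier--Mukai kernels $\mathcal{O}_Z$ and $\mathcal{O}_V$ are just structure sheaves of fiber products, they are compatible with restriction to toric charts, so fully faithfulness of $\Phi$ and $\Psi$ as well as the semi-orthogonal decomposition can be verified locally and then glued. For fully faithfulness of $\Phi$ I would show $p_{2*}\mathcal{O}_Z \cong \mathcal{O}_{\tilde Y}$ with vanishing higher direct images: the local model for $p_2$ has fibers that are either points or $B\mu_k$-gerbes, making the pushforward an elementary calculation. For $\Psi$ I would use the adjunction formula defining $B_E$ and the analogous local description of $q_1$ to get $q_{1*}\mathcal{O}_V \cong \mathcal{O}_{\tilde E}$.

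The line bundles $L_1,\dots,L_n$ I would construct as explicit toric characters of the stacky Picard group of $\tilde X$ associated to fractional multiples of the lift of $E$; these are precisely the ``gerbe characters'' that distinguish $\tilde X$ from $\tilde Y$ along $E$, and they restrict nontrivially to $\tilde E$ while being trivial outside a neighborhood of it. Semi-orthogonality $\mathrm{Hom}(\Phi(b),\Psi(a)\otimes L_i)=0$ and $\mathrm{Hom}(\Psi(a')\otimes L_j,\Psi(a)\otimes L_i)=0$ for $i>j$ then translates, via adjunction and the projection formula, into cohomology vanishings for specific line bundles on the gerbe fibers of $\tilde X \to \tilde Y$, which are of Bott/Demazure type on toric stacks. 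Generation is checked against the spanning class of point objects of $\tilde X$: points not on $E$ come from $\tilde Y$ via $\Phi$, while points on $\tilde E$ are extensions of objects in $\Psi(D^b(\tilde E))\otimes L_i$ for the various $i$.

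The principal obstacle will be the combinatorial engineering of the weights of the $L_i$ so that the two tasks—semi-orthogonality and generation—are achieved simultaneously by the same collection. This is the same technical core as in the flip and divisorial contraction theorems, but here the combinatorics is considerably tamer because only a single divisor and a single ``stacky direction'' are modified. Once the $L_i$ are pinned down, assembling the SOD from the pointwise cohomology vanishings is routine, and the toric-to-toroidal upgrade is automatic since the kernels are fiber products.
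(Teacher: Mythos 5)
Your overall strategy---fiber-product kernels that restrict well to toric charts, explicit line bundles twisting $\Psi(D^b(\text{coh}(\tilde E)))$, local verification glued globally---is indeed the shape of the argument in \cite{log-crep} and \cite{toricIII} (the survey itself says the toric proofs go through tilting generators, i.e.\ explicit collections of line bundles and their $\operatorname{Hom}$'s, with the fiber-product kernel supplying the globalization). But there are two genuine gaps. First, your structural premise is false in the stated generality: writing $e=1-1/n_1$ and $e'=1-1/n_2$ with $n_1>n_2$, there is no reason for $n_2$ to divide $n_1$ (take $e=2/3$, $e'=1/2$), so $\tilde X$ is \emph{not} a root stack or gerbe over $\tilde Y$ of order $n_1/n_2$, and divisibility is not ``automatic in the lattice picture''---the lattice only forces the coefficients to be standard, not nested. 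This is not a cosmetic point: it changes the combinatorics of the answer (the number $n$ of copies of $D^b(\text{coh}(\tilde E))$ is governed by the difference $n_1-n_2$, not by a quotient group), and it changes the local geometry of the kernel: $Z=\tilde X\times_X\tilde Y$ is locally $[\{s^{n_1}=t^{n_2}\}/(\mu_{n_1}\times\mu_{n_2})]$, which is non-normal and non-reduced along $E$, so the fibers of $p_2$ are thickened, not ``points or $B\mu_k$-gerbes'', and the pushforward computation you call elementary must be done on this singular correspondence.

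Second, even granting $Rp_{2*}\mathcal{O}_Z\cong\mathcal{O}_{\tilde Y}$, that only shows $Lp_2^*$ is fully faithful; it does not give fully faithfulness of $\Phi=Rp_{1*}Lp_2^*$, because $p_1$ is not an isomorphism (it has positive-dimensional, non-reduced stacky fibers over $E$) and you have said nothing about what $Rp_{1*}$ does to $\operatorname{Hom}$'s. The actual work in the cited papers is precisely here: one computes $\operatorname{Hom}(\Phi(L),\Phi(L')[p])$ for the generating line bundles $L,L'$ on $\tilde Y$ (equivalently, identifies the endomorphism algebra of the image of a tilting object) by explicit toric cohomology, and only then invokes generation. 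Your proposal defers exactly this computation, and in the non-divisible case the ``Bott/Demazure-type vanishing on gerbe fibers'' you appeal to is not available in the form you describe. The correct count and ordering of the twists $L_i=\mathcal{O}_{\tilde X}(kE_X)$ (where $n_1E_X=\pi_X^*E$) also has to come out of that computation rather than being engineered afterwards.
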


We provide elementary examples of the above theorems:

\begin{Expl}
(1) (divisorial contraction) Let $Y = \mathbf{C}^n/\mathbf{Z}_r$ be a quotient singularity 
defined by an action $(x_1,\dots,x_n) \mapsto (\zeta x_1,\dots,\zeta x_n)$, where 
$\zeta = \exp {2\pi i/r}$.
Let $f: X \to Y$ be a resolution of singularities given by blowing up the origin.
Then we have $K_X = f^*K_Y + (n-r)/rE$, where $E \cong \mathbf{P}^{n-1}$ is the exceptional divisor.
The Deligne-Mumford stack associated to $Y$ is the quotient stack $\tilde Y = [\mathbf{C}^n/\mathbf{Z}_r]$.

If $n > r$, then we have an SOD
\[
D^b(\text{coh}(X)) = \langle \mathcal{O}_E(-n+r), \dots, \mathcal{O}_E(-1), \Phi(D^b(\text{coh}(\tilde Y))) \rangle. 
\]
If $n=r$, then we have an equivalence
$D^b(\text{coh}(X)) \cong D^b(\text{coh}(\tilde Y))$.
If $n < r$, then we have an SOD
\[
D^b(\text{coh}(\tilde Y)) = \langle \mathcal{O}_P(1), \dots, \mathcal{O}_P(r-n), \Phi(D^b(\text{coh}(\tilde X))) \rangle 
\]
where $P = f(E) \in Y$.

\vskip 1pc

(2) (flip) Let $X$ be the total space of a vector bundle $\mathcal{O}_{\mathbf{P}^r}(-1)^{\oplus (s+1)}$
over $\mathbf{P}^r$ for $r,s \ge 1$.
Let $p_1: Z \to X$ be the blowing up along the zero-section $E \cong \mathbf{P}^r$, and let 
$p_2: Z \to Y$ be the blowing down of the exceptional divisor 
$G \cong \mathbf{P}^r \times \mathbf{P}^s$ to the other direction.
Then $Y$ is the total space of a vector bundle $\mathcal{O}_{\mathbf{P}^s}(-1)^{\oplus (r+1)}$.
We have $K_Z = p_1^*K_X + sG = p_2^*K_Y + rG$.

If $r > s$, then we have an SOD
\[
D^b(\text{coh}(X)) = \langle \mathcal{O}_E(-r+s), \dots, \mathcal{O}_E(-1), \Phi(D^b(\text{coh}(Y))) \rangle. 
\]
If $r=s$, then we have an equivalence
$D^b(\text{coh}(X)) \cong D^b(\text{coh}(Y))$.
\end{Expl}

\section{Application to the derived McKay correspondence}\label{GL3}

A derived McKay correspondence is a statement between the derived categories 
of a quotient stack and its resolution of singularities.

A basic example concerns a minimal resolution of a rational double point of a surface
(or a Du Val singularity, or a Kleinian singularity, or a canonical singularity).
Let $Y = \mathbf{C}^2/G$ be a quotient singularity by a finite subgroup
$G \subset SL(2,\mathbf{C})$, and let 
$f: X \to Y$ be a minimal resolution.
Then the theorem states that there is an equivalence (\cite{Kapranov-Vasserot}):
\[
D^b(\text{coh}(X)) \cong D^b(\text{coh}[\mathbf{C}^2/G]).
\]
Since $f$ is {\em crepant}, i.e., $K_X = f^*K_Y$, this is a special case of DK hypothesis.

There are generalizations for other groups in \cite{BKR}, \cite{Kaledin},\cite{Craw-Ishii}, 
\cite{Ishii}, \cite{Ishii-Ueda}.
We treat abelian groups and subgroups in $GL(3,\mathbf{C})$ in this section.

We define minimal and maximal models:

\begin{Defn}
(MI) Let $X$ be a normal variety.
Then a {\em minimal $\mathbf{Q}$-factorial terminalization} or a {\em relative minimal model} 
of $X$ is a projective birational morphism
$f: Y \to X$ from a variety with only $\mathbf{Q}$-factorial terminal singularities 
such that $K_Y$ is $f$-nef.

(MA) Let $(X,B)$ be a KLT pair of a normal variety and an $\mathbf{R}$-divisor.
Then a {\em maximal $\mathbf{Q}$-factorial terminalization} or a {\em relative maximal model} 
of $X$ is a projective birational morphism
$f: Y \to X$ from a variety with only $\mathbf{Q}$-factorial terminal singularities 
which satisfy the following conditions:

\begin{enumerate}
\item $K_Y \le f^*(K_X+B)$.

\item If $f': Y' \to X$ is another projective birational morphism from a variety with only $\mathbf{Q}$-factorial terminal singularities 
satisfying $K_{Y'} \le f'{}^*(K_X+B)$, then the induced birational map $Y \dashrightarrow Y'$ is surjective in codimension $1$. 
\end{enumerate}
\end{Defn}

The existences of minimal and maximal models are guaranteed by \cite{BCHM}.
The uniqueness is not true in general for each model except in dimension $2$. 
All minimal or maximal models are isomorphic in codimension $1$ each other.
Minimal models are $K$-equivalent each other, but maximal models are not in general.
A minimal model of $X$ for a KLT pair $(X,B)$ satisfies the condition (1) of (MA) but not necessarily (2).
A minimal model is {\em crepant}, i.e., $K_Y = f^*K_X$, if $X$ is canonical.

As a corollary of the result in \S \ref{toric}, we obtain a derived McKay correspondence for 
abelian groups:

\begin{Thm}
Let $X = \mathbf{C}^n/G$ be a quotient singularity by a finite abelian subgroup $G \subset GL(n,\mathbf{C})$, and
let $Y \to X$ be a relative minimal model.
Then $Y$ has only abelian quotient singularities.
Let $\tilde Y$ be the associated smooth Deligne-Mumford stack.
Then there are closed toric proper subvarieties $Z_i \subsetneq X$ for 
$i = 1,\dots, m$, allowing repetitions like $Z_i = Z_j$ for $i \ne j$, 
and fully faithful functors $\Phi: D^b(\text{coh}(\tilde Y)) \to D^b(\text{coh}[\mathbf{C}^n/G])$ and
$\Psi_i: D^b(\text{coh}(\tilde Z_i)) \to D^b(\text{coh}[\mathbf{C}^n/G])$, where the $\tilde Z_i$ are 
smooth Deligne-Mumford stacks associated to some minimal models of the $Z_i$, with a semi-orthogonal decomposition
\[
D^b(\text{coh}[\mathbf{C}^n/G]) = \langle \Psi_1(D^b(\text{coh}(\tilde Z_1))), \dots, \Psi_m(D^b(\text{coh}(\tilde Z_m))),
\Phi(D^b(\text{coh}(\tilde Y))) \rangle.
\]
Moreover, if $G \subset SL(n,\mathbf{C})$, then $m = 0$.
\end{Thm}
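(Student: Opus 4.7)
The strategy is to iterate the toric DK theorems of Section~\ref{toric} along a toric MMP connecting $[\mathbf{C}^n/G]$ to the minimal-model stack $\tilde Y$.

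Since $G \subset GL(n,\mathbf{C})$ is finite abelian, $X = \mathbf{C}^n/G$ is a toric variety and $\tilde X := [\mathbf{C}^n/G]$ is the smooth Deligne--Mumford stack associated to the $\mathbf{Q}$-factorial toric pair $(X,B_X)$, with $B_X$ of standard coefficients encoding the codimension-one ramification of $\mathbf{C}^n \to X$. A relative $\mathbf{Q}$-factorial terminalization $f:Y\to X$ exists in the toric category; $Y$ has only abelian quotient singularities, and we write $\tilde Y$ for the smooth DM stack of the associated pair $(Y,B_Y)$. My first step would be to factor the birational morphism $Y \to X$ as a sequence of elementary toric operations
\[
(Y,B_Y) = (X_N,B_N) \dashrightarrow \cdots \dashrightarrow (X_0,B_0) = (X,B_X),
\]
where each arrow is either a toric divisorial contraction or a toric flip between $\mathbf{Q}$-factorial toric pairs with standard coefficients. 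Such a factorization exists by the combinatorics of toric birational geometry: the fan of $Y$ is a subdivision of that of $X$, and any such subdivision is realized by finitely many elementary fan operations corresponding to extremal contractions and flips. All intermediate stacks $\tilde X_i$ are smooth DM by the quotient-type construction.

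Second, I would apply the toric DK theorems step by step. At each step where $K+B$ strictly decreases (divisorial contraction case~A or flip case~A), the corresponding theorem of Section~\ref{toric} produces a semi-orthogonal decomposition
\[
D^b(\text{coh}(\tilde X_{i-1})) = \langle \Psi_i(D^b(\text{coh}(\tilde F_i))) \otimes L_{i,1}, \ldots, \Psi_i(D^b(\text{coh}(\tilde F_i))) \otimes L_{i,n_i}, \Phi_i(D^b(\text{coh}(\tilde X_i))) \rangle,
\]
where $\tilde F_i$ is the smooth toric DM stack on the Mori fiber base of the exceptional or flipping locus. Where $K+B$ is preserved (case~B, i.e. a flop), $\Phi_i$ is an equivalence and no new pieces appear. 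Composing these decompositions yields a single SOD of $D^b(\text{coh}(\tilde X))$ with the main piece $\Phi(D^b(\text{coh}(\tilde Y))) := \Phi_1 \circ \cdots \circ \Phi_N(D^b(\text{coh}(\tilde Y)))$ and extra pieces coming from the various $\tilde F_i$. Each $F_i$ is a closed toric subvariety of $X_i$; pushing it forward along the remaining toric maps to $X$ identifies it with a closed toric proper subvariety $Z_i \subsetneq X$, and $\tilde F_i$ with the smooth DM stack $\tilde Z_i$ associated to a minimal model of $Z_i$, with repetitions allowed among the $Z_i$.

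Finally, when $G \subset SL(n,\mathbf{C})$, the subgroup contains no pseudoreflections, so $B_X = 0$ and $X$ is Gorenstein canonical; the relative minimal model $Y \to X$ is then crepant, $K_Y = f^*K_X$, and every step of the above factorization preserves $K$ (case~B throughout). All SOD steps are equivalences and $m = 0$. The main obstacle I expect is the identification of each $\tilde F_i$ with a stack $\tilde Z_i$ attached to a toric subvariety of $X$ (rather than of the intermediate $X_i$): one must track $F_i$ through all subsequent elementary toric maps, verify via the adjunction formulas used in the toric theorems that the boundary induced on the image $Z_i \subset X$ still has standard coefficients, and match $\tilde F_i$ with a specific minimal model of $Z_i$. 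Once the toric factorization of $Y \to X$ is in hand, this reduces to a fan-combinatorial bookkeeping matter.
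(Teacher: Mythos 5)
Your overall strategy is the one the paper intends: the theorem is stated there as a corollary of Section~\ref{toric}, obtained by factoring the toric birational map between $\tilde Y$ and $[\mathbf{C}^n/G]$ into elementary toric steps and composing the resulting semi-orthogonal decompositions, with the Mori fiber space bases supplying the pieces $\tilde Z_i$ and the $SL(n)$ case reducing to flops (case (B)) throughout. So the architecture is right.

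There is, however, one concrete gap in your list of elementary operations. You propose to connect $(Y,0)$ to $(X,B_X)$ by a chain consisting only of toric divisorial contractions and flips, each carrying the boundary by strict transform ($C=f_*B$). But when $G$ contains quasi-reflections, $B_X\neq 0$, and its components are \emph{not} exceptional for $Y\to X$: their strict transforms survive on $Y$, where they must appear with coefficient $0$ since the relative minimal model of the theorem is taken for $X$ with empty boundary. Divisorial contractions and flips with strict-transform boundaries never alter the coefficient of a divisor that is not contracted, so no chain of only those two operations can interpolate between $(X,B_X)$ with $B_X\neq 0$ and $(Y,0)$. You must also include the change-of-coefficients operation, i.e. the last theorem of Section~\ref{toric} (from \cite{log-crep}, \cite{toricIII}), which lowers the coefficient of a single prime divisor $E$ and contributes additional semi-orthogonal summands $\Psi(D^b(\text{coh}(\tilde E)))\otimes L_j$; these extra pieces are again supported on closed toric proper subvarieties of $X$, so they are absorbed into the list of $Z_i$ without harming the statement. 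With that operation added (and it is also what guarantees $m=0$ only uses genuine flops when $G\subset SL(n,\mathbf{C})$, since then $B_X=0$ and no coefficient changes occur), your argument goes through; the remaining work you correctly identify as bookkeeping is the verification that each $(F_i,C_{F_i})$, respectively each $(E,B_E)$, pushed down to $X$ is a minimal model of its image with standard coefficients, which is exactly what the adjunction statements in the cited theorems provide.
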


If we take another minimal model $Y'$ of $X$, we have $Y \sim_K Y'$, hence
$D^b(\text{coh}(\tilde Y)) \cong D^b(\text{coh}(\tilde Y'))$ because DK hypothesis is confirmed for toric
$K$-equivalence. 

By combining the results in \S \ref{toric} with the result of \cite{BKR}, we obtain the following (\cite{GL3C}):

\begin{Thm}
Let $X = \mathbf{C}^3/G$ be a quotient singularity by a finite subgroup $G \subset GL(3,\mathbf{C})$ which is 
not necessarily abelian nor small.
Then there exist a maximal model $Y \to X$ and affine varieties $Z_i$ which are finite over 
closed proper subvarieties of $X$ for 
$i = 1,\dots, m$, allowing repetitions like $Z_i = Z_j$ for $i \ne j$, 
and fully faithful functors $\Phi: D^b(\text{coh}(\tilde Y)) \to D^b(\text{coh}[\mathbf{C}^n/G])$ and
$\Psi_i: D^b(\text{coh}(\tilde Z_i)) \to D^b(\text{coh}[\mathbf{C}^n/G])$, where the $\tilde Z_i$ are 
minimal models of the $Z_i$, with a semi-orthogonal decomposition
\[
D^b(\text{coh}[\mathbf{C}^3/G]) = \langle \Psi_1(D^b(\text{coh}(\tilde Z_1))), \dots, \Psi_m(D^b(\text{coh}(\tilde Z_m))), 
\Phi(D^b(\text{coh}(\tilde Y))) \rangle.
\]
\end{Thm}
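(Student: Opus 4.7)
The plan is to combine the BKR derived McKay correspondence for the special linear part of $G$ with the toroidal semi-orthogonal decompositions of Section~\ref{toric} to peel off the residual abelian quotient. Set $H = G \cap SL(3,\mathbf{C})$; this is a normal subgroup with cyclic quotient $G/H$, since $\det$ identifies $G/H$ with a finite subgroup of $\mathbf{C}^\times$. The pair $(\mathbf{C}^3/H,0)$ is canonical Gorenstein, so by \cite{BKR} the $H$-Hilbert scheme $W = \mathrm{Hilb}^H(\mathbf{C}^3)$ is a crepant resolution equipped with a derived equivalence $D^b(\mathrm{coh}(W)) \cong D^b(\mathrm{coh}([\mathbf{C}^3/H]))$.

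Next I would transfer the $G/H$-action to $W$ using the naturality of $\mathrm{Hilb}^H$ with respect to automorphisms of $\mathbf{C}^3$ that normalize $H$, and form the smooth Deligne--Mumford stack $\mathcal{W} = [W/(G/H)]$. Galois descent applied to the equivalence above yields
\[
D^b(\mathrm{coh}(\mathcal{W})) \cong D^b(\mathrm{coh}([\mathbf{C}^3/G])),
\]
so the task reduces to producing a semi-orthogonal decomposition of $D^b(\mathrm{coh}(\mathcal{W}))$ whose principal piece is $D^b(\mathrm{coh}(\tilde Y))$ for a maximal model $f: Y \to \mathbf{C}^3/G$. The coarse space $\bar{\mathcal{W}} = W/(G/H)$ has only abelian quotient singularities, since $G/H$ is cyclic and acts linearly in local coordinates on the smooth variety $W$, so $\bar{\mathcal{W}}$ is a pair of quotient type in the sense of Section~\ref{quotient} and is therefore amenable to the toroidal theorems.

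I would then run a $(K_{\bar{\mathcal{W}}}+B)$-MMP relative to $\mathbf{C}^3/G$, with boundary $B$ encoding the stacky ramification of $\mathcal{W} \to \bar{\mathcal{W}}$. Termination in dimension three is standard, and the outcome is a maximal model $Y$ by construction: $\bar{\mathcal{W}}$ dominates every relative $\mathbf{Q}$-factorial terminalization, and contracting only those exceptional divisors on which $(K+B)$ fails to decrease produces the maximal one. Each elementary step $f_j: X_{j-1} \dashrightarrow X_j$ of this MMP is a divisorial contraction or a flip of $\mathbf{Q}$-factorial pairs of quotient type which becomes toroidal after etale localization on $\mathbf{C}^3/G$, so the theorems of Section~\ref{toric} produce a semi-orthogonal decomposition
\[
D^b(\mathrm{coh}(\tilde X_{j-1})) = \langle \Psi_{j,1}(D^b(\mathrm{coh}(\tilde F_j))), \dots, \Psi_{j,n_j}(D^b(\mathrm{coh}(\tilde F_j))), \Phi_j(D^b(\mathrm{coh}(\tilde X_j))) \rangle,
\]
where $\tilde F_j$ is the smooth DM stack attached to the center of the contraction or flip. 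Concatenating these decompositions along the MMP and transporting by the BKR equivalence of the previous paragraph yields the claimed SOD, with the $\tilde Z_i$ being the smooth DM stacks arising as the various $\tilde F_j$ (each finite over its image in $\mathbf{C}^3/G$).

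The main obstacle is the globalization of the toroidal arguments, since $W$ itself is not toric when $H$ is non-abelian. This is resolved by the observation that the fully faithful functors in Section~\ref{toric} are all Fourier--Mukai functors with kernels given by structure sheaves of explicit fiber products, and both full faithfulness and the prescribed right orthogonal complement are etale-local conditions on the base of the contraction; hence the local toroidal identifications suffice and the functors glue automatically over $\mathbf{C}^3/G$. A subsidiary issue is to verify that the adjunction boundary $B_E$ produced by each MMP step retains standard coefficients so that the hypotheses of Section~\ref{toric} remain in force throughout, which follows inductively from the adjunction formulas already used in the divisorial contraction and flip theorems, starting from the initial situation where $\mathcal{W}$ is smooth.
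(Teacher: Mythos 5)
Your proposal takes essentially the same route as the paper's proof (which is given in \cite{GL3C} and summarized here only as ``combining the results of \S\ref{toric} with \cite{BKR}''): set $H = G \cap SL(3,\mathbf{C})$, use the $G/H$-equivariant BKR equivalence for $\mathrm{Hilb}^H(\mathbf{C}^3)$ to reduce to the cyclic quotient stack $[W/(G/H)]$, observe that its coarse space is a pair of quotient type with abelian quotient singularities, and then peel off semi-orthogonal summands by running a relative MMP toward a maximal model, applying the toroidal divisorial-contraction and flip theorems whose fiber-product kernels glue \'etale-locally. No substantive divergence from the paper's approach to report.
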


There are choices of maximal models which are not $K$-equivalent.
Therefore we ask whether the theorem is true for other maximal models.


Graduate School of Mathematical Sciences, University of Tokyo,
Komaba, Meguro, Tokyo, 153-8914, Japan 

kawamata@ms.u-tokyo.ac.jp

\end{document}